\mathchardef\mhyphen="2D
\theoremstyle{plain}
\newtheorem{theorem}{Theorem}[section]
\newtheorem{prop}[theorem]{Proposition}
\newtheorem{lemma}[theorem]{Lemma}
\newtheorem{cor}[theorem]{Corollary}
\newtheorem*{nonu-theorem}{Theorem}
\theoremstyle{definition}
\newtheorem{example}[theorem]{Example}
\theoremstyle{remark}
\newtheorem{remark}[theorem]{Remark}
\newtheorem{properties}[theorem]{Properties}
\newcommand\twoheaduparrow{\mathrel{\rotatebox{90}{$\twoheadrightarrow$}}}
\newcommand{\sheaf}[1]{\mathscr{#1}}
\newcommand{\OO}{\mc{O}}
\newcommand{\PP}{\sheaf{P}}
\newcommand{\XX}{\sheaf{X}}
\newcommand{\UU}{\sheaf{U}}
\newcommand{\mc}[1]{\mathcal #1}
\def\F{{\mathbb F}}
\DeclareMathOperator{\Gal}{\mathrm{Gal}}
\DeclareMathOperator{\Hom}{\mathrm{Hom}}
\newcommand{\Z}{\mathbb Z}
\newcommand{\N}{\mathbb N}
\newcommand{\A}{\mathbb A}
\renewcommand{\P}{\mathbb P}
\newcommand{\Q}{\mathbb Q}
\newcommand{\G}{\mathbb G}
\DeclareMathOperator{\Spec}{\mathrm{Spec}}
\newcommand{\et}{\mathrm{\acute{e}t}}
\newcommand{\wh}[1]{\widehat{#1}}
\renewcommand{\labelenumi}{\it\alph{enumi})}
\def\lra{\longrightarrow}
\DeclareFontFamily{U}{wncy}{}
    \DeclareFontShape{U}{wncy}{m}{n}{<->wncyr10}{}
    \DeclareSymbolFont{mcy}{U}{wncy}{m}{n}
    \DeclareMathSymbol{\Sha}{\mathord}{mcy}{"58}
\begin{document}

%========================================================================================
\title[Local-global principles for tori over arithmetic curves]{Local-global principles for tori over arithmetic curves}
%========================================================================================

\author{Jean-Louis Colliot-Th\'el\`ene, David Harbater, Julia Hartmann, \\Daniel Krashen, R. Parimala, and V. Suresh}
\date{July 13, 2020}
\thanks{
\textit{Mathematics Subject Classification} (2010): 11E72, 12G05, 14G05 (primary);  14H25, 20G15, 14G27 (secondary).\\
\textit{Key words and phrases.} Linear algebraic groups, torsors, tori, local-global principles, Galois cohomology, semi-global fields, patching, flasque resolutions.}

\maketitle
\begin{abstract}
In this paper we study local-global principles for tori over semi-global fields, which are one variable function fields over complete discretely valued fields. In particular, we show that for principal homogeneous spaces for tori over the underlying discrete valuation ring, the obstruction to a local-global principle with respect to discrete valuations can be computed using methods coming from patching. We give a sufficient condition for the vanishing of the obstruction, as well as examples where the obstruction is nontrivial or even infinite. A major tool is the notion of a flasque resolution of a torus.
\end{abstract}
%==============================================
\section*{Introduction}
%==============================================

Classical local-global principles assert the existence of rational points on varieties over a global field $F$ under the assumption that the variety has points over certain overfields of~$F$, which are typically obtained via completions.  In recent years, such principles have also been studied over {\em semi-global fields}. A semi-global field is a one-variable function field $F$ over a complete discretely valued field $K$; i.e., a finitely generated extension of $K$ of transcendence degree one in which $K$ is algebraically closed.  For example, see \cite{CHHKPS}, \cite{CTPS1}, \cite{CTPS2}, \cite{HarSz}, \cite{HHK1}, \cite{admis}, \cite{HHK3}, \cite{HHK5}, \cite{Hu}.

In \cite{HHK1}, using patching techniques, a local-global principle was proven for torsors under a  linear algebraic group $G$ over a semi-global field $F$, under the hypothesis that $G$ is connected and rational as an $F$-variety.  The question of whether such a local-global principle also holds without any rationality hypothesis remained open until the paper \cite{CTPS2}, where counterexamples were given for $G=T$ a suitable non-rational torus.  In that example, $F$ does not have a smooth projective model over the valuation ring $R$ of the complete discretely valued field $K$; and the torus $T$ over $F$ does not extend to a torus over any regular projective model of $F$ over $R$.
This raised the following question, given a regular projective $R$-curve $\XX$ with function field $F$:
If one restricts attention
to  $F$-groups $G$ that are the restriction of reductive $\XX$-group schemes, or
of reductive $R$-group schemes, do we have a local-global principle?  If not, can we understand the precise obstruction to the local-global principle?

In the present text, we give answers to these questions in the case of tori. Unlike the situation of global fields, semi-global fields admit several natural collections of overfields due to the richer geometry, and hence there are several different possible local-global principles to consider. The corresponding obstructions are measured by Tate-Shafarevich groups. In our situation, for an $\XX$-torus $T$, we show that the obstruction groups to these different local-global principles all coincide with the Tate-Shafarevich group $\Sha(F,T)$ defined with respect to the completions of~$F$ at discrete valuations (see Theorem~\ref{allshaTequal}).  As a result, in order to prove a local-global principle with respect to these completions of $F$, it suffices to prove such a principle with respect to finitely many overfields arising from patching. Combining this with the notion of a flasque resolution of $T$, we are able to give a double coset formula in terms of the Galois cohomology of a flasque torus $S$ (see Theorem~\ref{H1Scosets}):
\begin{nonu-theorem}
Let $F$ be a semi-global field over a complete discretely valued field with valuation ring~$R$, and let $T$ be a torus over $R$. Let $1 \to S \to Q \to T \to 1$ be a flasque resolution of $T$.
Then there is an isomorphism of abelian groups
$$ \Sha(F,T)\simeq \left.   \prod _U H^1(F_U,S) \middle\backslash \prod_{(U, P)} H^1(F_{U, P},S)
 \middle\slash \prod_P H^1(F_P,S) \right. .$$
 \end{nonu-theorem}
Here the fields $F_P,F_U, F_{U,P}$ form a finite inverse system of fields coming from patching (see Section~\ref{setup}).

This formula implies the finiteness of $\Sha(F,T)$ for certain residue fields $k$ for which $H^1(k,S)$ is always finite (Theorem~\ref{finite}), including when $k$ is finitely generated over ${\mathbb Q}$. Moreover, using properties of flasque tori, it often leads to an exact computation of $\Sha(F,T)$ (see
Section~\ref{computations}, especially Theorems~\ref{Sha via coef sys} and~\ref{shaP1}). Another ingredient for these computations is a general graph theoretic setup that we introduce in Section~\ref{graphs}.

We also provide a sufficient condition for the vanishing of this obstruction in terms of the closed fiber of a normal crossings model $\XX$ of $F$. As in \cite{HHK3}, this condition concerns the reduction graph associated with a normal crossings model, but is more subtle (see Section~\ref{vanishing}; Theorem~\ref{triv Sha monotonic}):
\begin{nonu-theorem}
Let $K$ be a complete discretely valued field with valuation ring~$R$, and let $F$ be a semi-global field over~$K$. Let $\XX$ be a normal crossings model of $F$, and assume that the associated reduction graph $\Gamma$ is a monotonic tree. Then for any $R$-torus $T$,  $\Sha(F,T)$ is trivial.
\end{nonu-theorem}

In Section~\ref{nontriv sha ex}, we give examples where the obstruction is non-trivial, and in one case even infinite. Finally in Section~\ref{triv Sha loop} we give an example in which the obstruction vanishes even though one might have expected it to be nontrivial.

%==================================================
\section{Reminders on Patching}
%==================================================
In this section, we recall the main ingredients from patching and their use in the study of local-global principles over semi-global fields. We begin by recalling the patching setup (\cite{HHK1}, Notation~3.3).

%----------------------------------------------------------------------------------------------------------------------------------------------------------------------------------------------------------------------------------------------------
\subsection{The Patching Setup}\label{setup}
%----------------------------------------------------------------------------------------------------------------------------------------------------------------------------------------------------------------------------------------------------

Let $R$ be a complete discrete valuation ring, $K$ its field of fractions and
$k$ its  residue field. Let $t$ denote a uniformizing parameter for $R$.
Let $F$ be a semi-global field over $K$. A {\em normal model} of~$F$ is an integral $R$-scheme $\XX$ with function field~$F$ that is flat and projective over~$R$ of relative dimension one, and that is normal as a scheme. We write $X$ for the closed fiber $\XX\times_R k$. If $\XX$ is a normal model which is regular as a scheme, we say that $\XX$ is a {\em regular model}. Such a regular model exists by the main theorem in \cite{Lip78}.
In fact, by \cite[page 193]{Lip75}, there exists a regular model $\XX$ for which the reduced closed fiber $X^{\operatorname{red}}$  is a union of regular curves, with normal crossings.
We call such a model a {\em normal crossings model of~$F$}.

Let $\PP$ be a finite nonempty set of closed points of $X$
that contains all the
points of $X^{\operatorname{red}}$ at which different components meet.
Let $\UU$ be the set of irreducible components of $X^{\operatorname{red}} \setminus \PP$. Note that $X^{\operatorname{red}}  \setminus  \PP$ is
an affine curve.
For $U \in \UU$ such a component, we define $F_{U}$
to be the field of fractions of the $t$-adic completion $\wh{R}_{U}$ of the ring $R_{U} \subset F$
consisting of the rational functions on~$\XX$ that are regular at all points of $U$. Note that $\wh{R}_{U}$ is $I$-adically complete for the radical $I$ of the ideal generated by $t$ in $\wh{R}_{U}$.
The quotient
$  \wh{R}_{U}/I $ equals $k[U]$, the ring of regular functions on the integral, affine curve $U$.
For a (not necessarily closed)
 point $P$ of $X$, we let~$F_{P}$ denote the field of fractions of the complete local ring $\wh R_P:=\wh{\OO}_{\XX,P}$ of $\XX$ at $P$.

Let  $U \in \UU$ and let $P\in \PP$ be a closed point that is in the closure $\bar U$ of $U$ inside $X$ (recall that $\bar U$ is a regular curve).
The pair $(U,P)$ is a {\em branch} of $X$ at $P$ on $U$.
Let $\wh{R}_{U,P}$ denote the completion of the localization of $\wh{\OO}_{\XX,P}$ at the codimension~1 point that is associated to
the generic point of $\bar U$. This is a discrete valuation ring; let $F_{U,P}$ denote its fraction field.
There is an inclusion $F_{P} \subset F_{U,P}$ induced by the inclusion $\wh{R}_P\subset \wh{R}_{U,P}$.
There is also an inclusion $F_{U} \subset  F_{U,P}$, induced by the inclusion $\wh{R}_{U} \to \wh{R}_{U,P}$. (See \cite{admis}, beginning of Section~4.)

In the later sections, we will also need to consider residue fields and constant fields. For a point $P\in X$, we let $\kappa(P)$ denote its residue field. For an affine open $U\subseteq X$, we define $\kappa(U)=\OO (\bar U)$, the ring of functions on the closure of $U$; this is the field of constants of the irreducible component $\bar U$ of $X^{\operatorname{red}}$. Note that there is a natural inclusion of fields $\kappa(U)\rightarrow \kappa(P)$ when $P$ is on the closure of~$U$.

%----------------------------------------------------------------------------------------------------------------------------------------------------------------------------------------------------------------------------------------------------
\subsection{Obstructions to local-global principles over semi-global fields}\label{lgp}
%----------------------------------------------------------------------------------------------------------------------------------------------------------------------------------------------------------------------------------------------------

 Let $G$ be a linear algebraic group over a semi-global field~$F$, i.e., a smooth affine group scheme of finite type over $F$. In this subsection, we define various collections of overfields of~$F$, the associated local-global principles for $G$-torsors, and their obstructions, which are subsets of the Galois cohomology set $H^1(F,G)$.

Recall that a $G$-torsor $Z$ over $F$ is called {\em trivial} if it is isomorphic to $G$ as a $G$-space (with the action given by translation); equivalently, $Z$ has an $F$-point. Isomorphism classes of $G$-torsors over $F$ correspond bijectively to the elements of the pointed set $H^1(F,G)$, and under this identification, the trivial torsor corresponds to the trivial element in $H^1(F,G)$.

Given a collection of overfields $(F_i)_{i\in I}$ of $F$, one can consider the corresponding local-global principle for rational points on $G$-torsors: Must a $G$-torsor $Z$ which has a point over each $F_i$ also have an $F$-point? Equivalently, must an element in the kernel of $H^1(F,G)\rightarrow \prod_{i\in I}H^1(F_i,G)$ be the trivial class? Consequently, the kernel of the local-global map describes the obstruction to a local-global principle for rational points, for all $G$-torsors over $F$.

The first such obstruction set we consider is defined via discrete valuations, in analogy to the number field case.
Let $\Omega$ be the set of  discrete valuations of $F$. For $v \in \Omega$, let $F_v$ be the completion of $F$ at $v$.

 We define $$\Sha(F, G) = {\rm ker}(H^1(F, G) \to \prod_{v  \in \Omega} H^1(F_v, G));$$
 here the kernel is defined as the preimage of the trivial element.

The other obstruction sets we consider are defined using a normal crossings model~$\XX$ of the semi-global field~$F$. As above, let $X$ denote the closed fiber of~$\XX$. We then
define
$$\Sha_X(F, G) = {\rm ker}(H^1(F, G) \to \prod_{P \in X} H^1(F_P, G)),$$
where $P$ runs through all the points of $X$ (including generic points of components of $X$).

For $\PP$ a subset of the reduced closed fiber and corresponding $\UU$ as above, we let
$$\Sha_\PP(F, G) = {\rm ker}(H^1(F, G) \to \prod_{\zeta \in \UU \cup \PP } H^1(F_\zeta, G)).$$

Finally, let $\XX^{(1)}$ be the set of codimension one points of $\XX$, and let
$$\Sha_\XX(F, G) = {\rm ker}(H^1(F, G) \to \prod_{x \in \XX^{(1)}} H^1(F_x, G)).$$
Here for $x \in \XX^{(1)}$, $F_x$ denotes the fraction field of the complete local ring at~$x$ (which is the same as the completion of $F$ at the discrete valuation of $F$ given by $x$).

 There are several known containments among these obstruction sets. Namely,
 $$ \Sha_\PP(F, G) \subseteq \Sha_X(F, G) \subseteq \Sha(F, G)\subseteq \Sha_\XX(F, G).$$
 Here the first inclusion was shown in \cite[Corollary 5.9]{HHK3}, the second one is by \cite[Proposition 8.2]{HHK3}, and the final one is by definition.  One also has  $\cup_{\PP}  \Sha_\PP(F, G) =  \Sha_X(F, G)$ (\cite[Corollary 5.9]{HHK3}); the union is taken over all subsets $\PP$ that satisfy the conditions above.

 If $G$ is a reductive group over the scheme $\XX$ (rather than merely over $F$),
  then by \cite[Thm. 4.2(ii)]{CTPS1} one further has
 $$\cup_{\PP}  \Sha_\PP(F, G)  =  \Sha_X(F, G) = \Sha(F, G) = \Sha_\XX(F, G).$$
 (In loc.cit., it was assumed that the group is defined over the underlying discrete valuation ring $R$, but the proof of part~(ii) in fact only relies on $G$ being defined over $\XX$.)
 For tori, a strengthening of this is given at Theorem~\ref{allshaTequal} below.

The definitions here are given for a general linear algebraic group~$G$; in this manuscript we study these obstruction sets when $G$ is a torus. In that case the Galois cohomology sets are abelian groups, and the obstruction sets are subgroups.

Although completions with respect to discrete valuations are in closest analogy to classical local-global principles, the geometrically defined obstruction set $\Sha_{\PP}(F,G)$ is easier to compute explicitly (and as noted above, they are equal in interesting cases). In particular, we recall the following theorem.
 \begin{theorem}\cite[Cor. 3.6]{HHK3}\label{doublecosetG}
For any linear algebraic group $G$ over
$F$, we have a bijection
of pointed sets
$$ \Sha_\PP(F, G)  \simeq \left.  \prod _U G(F_U)  \middle\backslash \prod_{(U, P)} G(F_{U, P})
 \middle\slash \prod_P G(F_P) \right.  .$$
  Here the left and right hand side products run over $\UU$ and $\PP$, respectively, and the middle product runs over the branches $(U,P)$.
 For a commutative group $G$, this is an isomorphism of groups.
\end{theorem}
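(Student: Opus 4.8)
The plan is to deduce the double coset description of $\Sha_\PP(F,G)$ from the known patching result that the category of $G$-torsors over $F$ (or rather over the relevant pieces) satisfies descent along the finite inverse system of fields $F_P, F_U, F_{U,P}$. Concretely, the key input is the patching equivalence from \cite{HHK1}: giving a $G$-torsor over $F$ that becomes trivial over each $F_P$ and each $F_U$ is the same as giving, for each branch $(U,P)$, an element of $G(F_{U,P})$, modulo the independent left action of $\prod_U G(F_U)$ (via $F_U \subset F_{U,P}$) and right action of $\prod_P G(F_P)$ (via $F_P \subset F_{U,P}$). First I would recall that $\Sha_\PP(F,G)$, by definition, consists of classes in $H^1(F,G)$ that die in each $H^1(F_\zeta,G)$ for $\zeta \in \UU \cup \PP$; so every such class is in the image of the gluing construction just described, since triviality over all the $F_U$ and $F_P$ is exactly the hypothesis needed to feed the patching machine.

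Next I would make the bookkeeping precise. A torsor trivialized over each $F_U$ and each $F_P$ is reconstructed from a collection of ``transition elements'': for each branch $(U,P)$ one chooses trivializations over $F_U$ and over $F_P$, and compares them over $F_{U,P}$, obtaining $g_{U,P} \in G(F_{U,P})$. Two such collections give isomorphic $F$-torsors precisely when they differ by changing the chosen trivializations, i.e.\ by replacing $g_{U,P}$ by $a_U\, g_{U,P}\, b_P^{-1}$ with $a_U \in G(F_U)$, $b_P \in G(F_P)$ (images under the respective inclusions into $G(F_{U,P})$). This is exactly the claim that the set of such torsors is the double coset space
$$\prod_U G(F_U) \,\Big\backslash\, \prod_{(U,P)} G(F_{U,P}) \,\Big/\, \prod_P G(F_P).$$
The base point (trivial torsor) corresponds to the trivial double coset, so the bijection is one of pointed sets. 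For commutative $G$ all the sets involved are groups and all the maps are homomorphisms, so the double coset space is the quotient group $\big(\prod_{(U,P)} G(F_{U,P})\big)/\big(\mathrm{im}\,\prod_U G(F_U)\cdot \mathrm{im}\,\prod_P G(F_P)\big)$, and the bijection is a group isomorphism.

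The one point that needs care — and what I expect to be the main obstacle — is verifying that the gluing data really does descend to an $F$-torsor and not merely to a torsor over some larger ring, i.e.\ that the patching/factorization results of \cite{HHK1} apply in the form stated, with the base being $F$ itself and the overfields being exactly the $F_U, F_P, F_{U,P}$ attached to the chosen $\PP$. This requires knowing that $F$ is the inverse limit (in the appropriate sense) of the $F_U$ and $F_P$ over the $F_{U,P}$, together with the simultaneous factorization theorem for $G(F_{U,P})$, both of which are established in \cite{HHK1} under precisely the hypotheses in force here ($\XX$ a normal model, $\PP$ containing all crossing points). Granting those, the remaining work is the routine identification of isomorphism classes with double cosets sketched above. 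Since the theorem is quoted from \cite[Cor.~3.6]{HHK3}, in the paper itself this is simply cited; the proposal above is the argument underlying that citation.
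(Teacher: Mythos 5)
The paper states this result as a citation to \cite[Cor.\ 3.6]{HHK3} rather than proving it, and your sketch correctly reconstructs the patching argument behind that corollary: gluing data over the branches modulo independent changes of trivialization over the $F_U$'s and $F_P$'s, with the patching equivalence of categories from \cite{HHK1} giving both that every such collection descends to an $F$-torsor and that two collections descend to isomorphic torsors exactly when they lie in the same double coset. One small inaccuracy in your last paragraph: the simultaneous factorization theorem of \cite{HHK1} (that $G(F_{U,P}) = G(F_U)\,G(F_P)$ for connected rational $G$) is not an ingredient of this bijection; it is the separate tool used to show the double coset space is a single point when $G$ is rational. What the bijection itself needs is only the patching equivalence together with the intersection/inverse-limit property of the fields $F$, $F_U$, $F_P$, $F_{U,P}$, both of which hold in the setup of Section~\ref{setup}.
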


%===========================================================
\section{Reminders on $R$-equivalence and flasque tori}\label{reminders}
%===========================================================

This section contains reminders from \cite{CTS77}  and  \cite{CTS87} (to which we refer, including for the history of these results).

Let $k$ be a field, and let $k_{s}$ be a separable algebraic closure of $k$.
A {\em $k$-torus} is an algebraic group $T$ over $k$ such that $T\times_{k}k_{s}$
is isomorphic to a product of copies of ${\mathbb G}_{m,k_{s}}$.
Its {\em character group} is the group of homomorphisms of $k_s$-algebraic groups $\hat{T}=\Hom_{k_{s}\mathrm{\mhyphen gp}}(T\times_{k}k_{s},{\mathbb
G}_{m,k_{s}})$.
It is a $\Gal(k_{s}/k)$-lattice, i.e., a free finitely generated abelian group with a continuous,
discrete action of $\Gal(k_{s}/k)$. This $\Gal(k_{s}/k)$-lattice determines the $k$-torus.
The $k$-torus is said to be {\em split} by a  Galois extension $\ell /k$ if
$T\times_{k}\ell$ is $\ell$-isomorphic to a product of copies of ${\mathbb G}_{m,\ell}$.

A {\em quasitrivial} $k$-torus $Q$ is a $k$-torus which is $k$-isomorphic to
a finite product  $\prod_{i}R_{k_{i}/k}{\mathbb G}_{m}$, for
 finite separable field extensions $k_i$ of $k$.
Such a $k$-torus $Q$ is an open subset of  $\prod_{i}R_{k_{i}/k}{\mathbb G}_{a}$,
which is $k$-isomorphic to an affine space $\A^n_{k}$. In particular, $Q$ is a rational linear algebraic group.
Shapiro's Lemma and Hilbert's Theorem~90 imply that the
Galois cohomology group $H^1(k,Q)$ of a quasitrivial $k$-torus $Q$ is trivial.
These properties are stable under field extensions of $k$.

A {\em flasque} $k$-torus $S$ is a $k$-torus such that $H^1(H,\Hom_{\Z}(\hat{S},\Z))=0$
for all closed subgroups $H$ of $\Gal(k_{s}/k)$.

Given any $k$-torus $T$, there exists an exact sequence of $k$-tori
$$ 1 \to S \to Q \to T \to 1$$
with $Q$ quasitrivial and $S$ flasque (see \cite{CTS77}, \cite{CTS87}, where the idea is attributed to Voskresenski\u{\i}).
The short exact sequence is called a  {\em flasque resolution of $T$}.
(The torus $S$ in such a sequence is well defined up to taking a product with a quasitrivial torus.)

By taking Galois cohomology, or \'etale cohomology, such a flasque resolution
induces an exact sequence
$$ Q(k) \to T(k) \to H^1(k,S) \to 0$$
since $H^1(k,Q)=0$, as recalled above.

Another way of interpreting $H^1(k,S)$ is in terms of $\rm R$-equivalence.
Let $Z$ be a $k$-variety. Two $k$-points $P,Q \in Z(k)$
 are called {\em elementary linked} if there exist an open subset~$U$ of the affine line $\A^1_{k}$ and a $k$-morphism
$f : U \to Z$
such that $P$ and $Q$ are in $f(U(k))$. One then defines {\em $\rm R$-equivalence}
on $Z(k)$ as the equivalence relation generated by this relation.
If $Z=G$ is an algebraic group over $k$, the set of $k$-points that are $\rm R$-equivalent to $1 \in G(k)$
is a normal subgroup. The quotient is denoted by $G(k)/\rm R$.

Now if $Q$ is a quasitrivial torus, then since $Q$ is an open subset of affine space,
$Q(k)/{\rm R}=1$. For a flasque resolution $1\rightarrow S\rightarrow Q\rightarrow T\rightarrow 1$, we thus have  an isomorphism
$T(k)/{\rm Im}(Q(k)) \simeq H^1(k,S)$ and a surjection $T(k)/{\rm Im}(Q(k)) \to T(k)/\rm R$.
In fact (see \cite[Theorem~3.1]{CTS87}) the latter map is an isomorphism and we have
 $$T(k)/{\rm R} \simeq H^1(k,S).$$

For any noetherian scheme $Z$, one defines tori over $Z$, quasitrivial tori over $Z$, flasque tori over $Z$, and flasque resolutions of tori over $Z$ in
an analogous way; the latter always exist. These notions are
functorially contravariant with respect
to any morphism $Y \to Z$ (e.g., see \cite[Prop.~1.4]{CTS87} for the
pullback of flasque resolutions).

In this paper, a torus $T$ over $X$ is by definition isotrivial.
Namely, there exists a finite, \'etale, surjective map $Y \to X$
such that $T\times_XY$ is a split torus over $Y$, i.e.. it is isomorphic to
a power $\G^r_{m,Y}$ of the multiplicative group $\G_{m,Y}$. 
We refer to \cite[\S 0]{CTS87} for basic definitions and properties of
tori over a scheme.

\medskip

We will use the following basic properties:

\begin{properties}\label{properties_flasque}
\renewcommand{\theenumi}{\alph{enumi}}
\renewcommand{\labelenumi}{(\alph{enumi})}
\begin{enumerate}
\item \label{torus coho}
Let $A$ be a regular local ring with
fraction field $L$ and residue field $k$, and let $T$ be an $A$-torus.
Then the natural restriction map of \'etale cohomology groups
$H^1_{\text{\'et}}(A,T) \to H^1(L,T)$ is injective (by \cite[Theorem~4.1(i)]{CTS87}).
If $A$ is complete then the restriction map $H^1_{\text{\'et}}(A,T) \to
H^1(k,T)$ is an isomorphism (\cite[Prop. 8.1]{Dema}, \cite[Chap. III, Remark 3.11]{Milne}).
\item \label{flasque coho surj}
\cite[Theorem~2.2(i)]{CTS87}  If $S$ is a flasque torus over a regular
connected scheme $Z$, then for any open set $U \subset Z$,
the restriction map $H^1_{\text{\'et}}(Z,S) \to H^1_{\text{\'et}}(U,S)$
is surjective. Moreover, the restriction map
$H^1_{\text{\'et}}(Z,S) \to H^1(L,S)$  is also surjective, where $L$ is
the function field of $Z$.  (This condition motivates the
terminology ``flasque'', in analogy with the term ``flasque sheaf''.)
\item \label{aff flasque coho iso}
\cite[Corollary~2.6]{CTS87} If $k$ is a field, $S$ is a flasque
$k$-torus, and if $U$ is a non-empty open subset of $\A^n_k$, the
composition $H^1(k,S) \to H^1_{\text{\'et}}(\A^n_k,S) \to
H^1_{\text{\'et}}(U,S)$ is an isomorphism.
\end{enumerate}
\end{properties}

Thus if $A$ is a regular local ring with fraction field $L$ and residue
field $k$, and $S$ is a flasque torus over $A$, then there is a
{\em specialization} map $H^1(L,S) \to H^1(k,S)$ that is given by
\[H^1(L,S) \overset{\simeq}{\longleftarrow} H^1_{\text{\'et}}(A,S) \to
H^1(k,S),\]
and which is an isomorphism if $A$ is complete.

In particular, there is a specialization map in the case of a discrete
valuation ring $A$, such as the local ring at a closed point on a regular curve with function field $L$.  In the case that $A$ is a complete regular local
ring of dimension two with residue field $k$, and $\pi$ is a regular
prime of $A$ (i.e., $A/(\pi)$ is regular),
after Lemma~\ref{residue} below
we also give a specialization map $H^1(L_\pi,S)\rightarrow H^1(k,S)$
which is compatible with the specialization map $H^1(L,S) \to H^1(k,S)$ (here $L_\pi$ denotes the completion of $L$ with respect to $\pi$).

%===========================================================
\section{The case of $F$-tori}
%===========================================================

In this section, we give a new double coset description of $\Sha_{\PP}(F,T)$ when $T$ is an $F$-torus. This description can be stated using a flasque resolution, or using ${\rm R}$-equivalence.

 \begin{theorem}\label{H1Scosets}
 \label{sharequiv}
Let $F$ be a semi-global field, let $\XX$ be a normal crossings model of~$F$ with closed fiber~$X$, and let $\PP\subset X^{\operatorname{red}}$ and $\UU$ be as in Section~\ref{setup}.
Let $T$ be an $F$-torus,
 and let $1 \to S \to Q \to T \to 1$ be a flasque resolution of $T$ over $F$.

\renewcommand{\theenumi}{\alph{enumi}}
\renewcommand{\labelenumi}{(\alph{enumi})}

 \begin{enumerate}
\item\label{F-tori_a} There is an isomorphism of abelian groups
$$ \Sha_\PP(F, T)  \simeq \left. \prod _U H^1(F_U,S) \middle\backslash \prod_{(U, P)} H^1(F_{U, P},S)
 \middle\slash  \prod_P H^1(F_P,S)  \right. .$$

 \item\label{F-tori_b} There is an isomorphism of abelian groups
  $$   \Sha_\PP(F, T)\simeq\left. \prod _U T(F_U)/{\rm R}  \middle\backslash \prod_{(U, P)} T(F_{U, P})/{\rm R}
 \middle\slash \prod_P  T(F_P)/{\rm R}  \right. ,$$
where ${\rm R}$ denotes ${\rm R}$-equivalence.
\end{enumerate}
 Here in each double coset decomposition, the left and right hand side products run over $\UU$ and $\PP$, respectively, and the middle product runs over the branches $(U,P)$.
  \end{theorem}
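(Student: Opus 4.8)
The plan is to read off both parts from the double coset formula of Theorem~\ref{doublecosetG}, applied twice: once to $T$ and once to the quasitrivial torus $Q$ of the given flasque resolution. Since every group that occurs is commutative, for a commutative $F$-group $G$ write
$$D(G):=\left.\prod_U G(F_U)\,\middle\backslash\,\prod_{(U,P)}G(F_{U,P})\,\middle\slash\,\prod_P G(F_P)\right.,$$
which, by commutativity, is just the cokernel of the difference homomorphism
$$\phi_G\colon\ \prod_U G(F_U)\times\prod_P G(F_P)\ \longrightarrow\ \prod_{(U,P)}G(F_{U,P})$$
induced by the inclusions $F_U\subset F_{U,P}$ and $F_P\subset F_{U,P}$. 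Theorem~\ref{doublecosetG} gives $\Sha_\PP(F,T)\simeq D(T)=\operatorname{coker}(\phi_T)$, while the right-hand side of part~(\ref{F-tori_a}) is by definition $\operatorname{coker}(\phi_S)$. So (\ref{F-tori_a}) amounts to the statement $\operatorname{coker}(\phi_T)\simeq\operatorname{coker}(\phi_S)$.

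Next I would bring in the flasque resolution. Base changing $1\to S\to Q\to T\to 1$ along each inclusion $F\hookrightarrow F_\zeta$, with $\zeta$ ranging over the components $U$, the points $P$, and the branches $(U,P)$, again gives a flasque resolution, and since $Q$ is quasitrivial we obtain for each $\zeta$ an exact sequence $Q(F_\zeta)\to T(F_\zeta)\xrightarrow{\partial_\zeta}H^1(F_\zeta,S)\to H^1(F_\zeta,Q)=0$. The connecting maps $\partial_\zeta$ are functorial for field extensions, so they fit into a commutative square whose horizontal arrows are $\phi_T$ and $\phi_S$, whose left vertical arrow $\alpha$ is built from the $\partial_U$ and $\partial_P$, and whose right vertical arrow $\beta$ is built from the $\partial_{U,P}$. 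Both $\alpha$ and $\beta$ are surjective (each $\partial_\zeta$ is), so on cokernels we get a surjection $\bar\beta\colon\operatorname{coker}(\phi_T)\twoheadrightarrow\operatorname{coker}(\phi_S)$, and a routine chase using surjectivity of $\alpha$ identifies $\ker(\bar\beta)$ with $\ker(\beta)/\bigl(\ker(\beta)\cap\operatorname{Im}(\phi_T)\bigr)$. By exactness $\ker(\beta)=\prod_{(U,P)}\operatorname{Im}\bigl(Q(F_{U,P})\to T(F_{U,P})\bigr)$, so it remains to show $\ker(\beta)\subseteq\operatorname{Im}(\phi_T)$.

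This is where the second application of Theorem~\ref{doublecosetG} comes in: taking $G=Q$, it identifies $\operatorname{coker}(\phi_Q)=D(Q)$ with $\Sha_\PP(F,Q)\subseteq H^1(F,Q)=0$, so $\phi_Q$ is surjective. Given a family $(c_{U,P})\in\ker(\beta)$, i.e.\ one with each $c_{U,P}$ in the image of $Q(F_{U,P})$, lift each $c_{U,P}$ to $\tilde c_{U,P}\in Q(F_{U,P})$, use surjectivity of $\phi_Q$ to write $(\tilde c_{U,P})=\phi_Q\bigl((\tilde a_U),(\tilde b_P)\bigr)$ with $\tilde a_U\in Q(F_U)$, $\tilde b_P\in Q(F_P)$, and push these forward to $a_U\in T(F_U)$, $b_P\in T(F_P)$. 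Compatibility of $Q\to T$ with restriction then gives $(c_{U,P})=\phi_T\bigl((a_U),(b_P)\bigr)\in\operatorname{Im}(\phi_T)$. Hence $\bar\beta$ is an isomorphism, and composing with $\Sha_\PP(F,T)\simeq D(T)$ yields part~(\ref{F-tori_a}).

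For part~(\ref{F-tori_b}) I would invoke the identification $T(k)/{\rm R}\simeq H^1(k,S)$ recalled in Section~\ref{reminders} (\cite[Theorem~3.1]{CTS87}), which holds over each field $F_\zeta$ since the flasque resolution base changes. Because that identification is realized through the connecting map, via $T(k)/{\rm R}\xleftarrow{\ \sim\ }T(k)/\operatorname{Im}(Q(k))\xrightarrow{\ \partial\ }H^1(k,S)$, it is compatible with the restriction maps along $F_U\hookrightarrow F_{U,P}$ and $F_P\hookrightarrow F_{U,P}$; hence it carries the $T(\cdot)/{\rm R}$ double coset diagram isomorphically onto the $H^1(\cdot,S)$ diagram of part~(\ref{F-tori_a}), and passing to cokernels gives part~(\ref{F-tori_b}). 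I expect the main difficulty to lie not in any single step but in the compatibility bookkeeping: one must line up the three diagrams attached to $Q$, $T$, and $S$ so that all restriction maps, all connecting homomorphisms $\partial_\zeta$, and all maps induced by $Q\to T$ commute at once, after which the whole statement reduces to the twofold vanishing $H^1(F_\zeta,Q)=0$.
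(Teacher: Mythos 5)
Your proof is correct and takes essentially the same route as the paper: both apply Theorem~\ref{doublecosetG} to $T$ and to $Q$, base change the flasque resolution to exploit $H^1(L,Q)=0$ for all overfields $L$ of $F$, and then reduce part~(a) to a diagram chase, with part~(b) following from the functorial identification $T(L)/\mathrm{R}\simeq H^1(L,S)$. You merely unpack the chase more explicitly (as a kernel/cokernel computation across a square built from the connecting maps) where the paper displays a larger commutative diagram of exact sequences and invokes a chase; the two key facts used --- exactness of $Q(L)\to T(L)\to H^1(L,S)\to 0$ and surjectivity of the restriction map $\prod_U Q(F_U)\times\prod_P Q(F_P)\to\prod_{(U,P)}Q(F_{U,P})$ --- are identical.
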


 \begin{proof}
Since $Q$ is quasitrivial,  $H^1(F,Q)=0$. In particular, $\Sha_\PP(F,Q)=0$.
By Theorem~\ref{doublecosetG} applied to $Q$, this implies that the natural map
$\prod\limits_U Q(F_U)\times \prod\limits_P Q(F_P) \to \prod\limits_{(U,P)}Q(F_{U,P})$
is onto. Applying cohomology and using the vanishing of $H^1(L,Q)$ for any overfield $L$ of $F$,
we
 obtain the commutative diagram of exact sequences
 $$
 \resizebox{\displaywidth}{!}{%
\hspace{-2em}\xymatrix{
 \prod\limits_U Q(F_U)\times\prod\limits_P Q(F_P)\ar[r] \ar[d]&\prod\limits_{(U,P)}Q(F_{U,P})
\ar[r]
\ar[d]& 1
\\
\prod\limits_U T(F_U)\times\prod\limits_P T(F_P) \ar[r]\ar[d] &\prod\limits_{(U,P)}T(F_{U,P})\ar[r]\ar[d]& \prod \limits_U T(F_U)\backslash \prod\limits_{(U, P)} T(F_{U, P})
\slash \prod\limits_P  T(F_P)  \ar[d] \ar[r]  &  1 \\
\prod\limits_U H^1(F_U,S)\times \prod\limits_P H^1(F_P,S)  \ar[r] \ar[d] & \prod\limits_{(U,P)}H^1(F_{U,P},S)\ar[r]\ar[d]&  \prod\limits _U H^1(F_U,S) \backslash \prod\limits_{(U, P)} H^1(F_{U, P},S)
\slash \prod\limits_P  H^1(F_P,S) \ar[d]\ar[r]  & 1  \\
  1 &1 & 1
}}
$$

\bigskip
A diagram chase gives that  the map
$$ \prod \limits_U T(F_U) \backslash \prod\limits_{(U, P)} T(F_{U, P})
\slash  \prod\limits_P  T(F_P) \to
\prod\limits _U H^1(F_U,S)  \backslash \prod\limits_{(U, P)} H^1(F_{U, P},S)
\slash \prod\limits_P  H^1(F_P,S)  $$ is an isomorphism.
By Theorem \ref{doublecosetG} applied to $T$
there is an isomorphism
  $$ \Sha_\PP(F, T)  \simeq\left.   \prod _U T(F_U)\middle\backslash \prod_{(U, P)} T(F_{U, P})
 \middle\slash  \prod_P  T(F_P)  \right. .$$
This proves~(\ref{F-tori_a}). Then~(\ref{F-tori_b}) follows from~(\ref{F-tori_a}) and the functorial isomorphisms $T(L)/{\rm R} \simeq H^1(L,S)$
for any field extension  $L$ of $F$.
  \end{proof}

As a first application, we obtain a finiteness result.

 \begin{theorem}\label{Ftorus-finite}
 Let $k$ be either a field that is finitely generated
   over ${\mathbb Q}$ or a local field of characteristic zero, and let $K=k((t))$. Let $F$ be a semi-global field over $K$ with normal crossings model $\XX$ and closed fiber~$X$.
Let $\PP \subset X^{\operatorname{red}}$ and $\UU$ be as in Section~\ref{setup}.
 Then for any $F$-torus $T$, $\Sha_\PP(F, T)$ is finite.
  \end{theorem}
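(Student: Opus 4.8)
The plan is to use the double coset description of $\Sha_\PP(F,T)$ from Theorem~\ref{H1Scosets}(\ref{F-tori_a}) and reduce the finiteness of $\Sha_\PP(F,T)$ to finiteness of each of the factors $H^1(F_U,S)$, $H^1(F_{U,P},S)$, $H^1(F_P,S)$ appearing in the formula, where $1\to S\to Q\to T\to 1$ is a flasque resolution of $T$ over $F$. Since $\PP$ is finite and $\UU$ and the set of branches are correspondingly finite, the double coset set is a quotient of a finite product of the groups $H^1(F_{U,P},S)$, so it suffices to show each such group is finite; a fortiori it suffices to bound each one. This reduces the theorem to a local statement: for each of the fields $F_U$, $F_P$, $F_{U,P}$ (all of which are, up to the structure recalled in Section~\ref{setup}, fraction fields of complete local rings of dimension one or two whose residue fields are finite extensions of $k$ or of $k((u))$ for a second variable), the group $H^1$ of a flasque torus is finite.

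The key steps I would carry out are as follows. First, fix a finite Galois extension $\ell/F$ splitting $S$, and observe that since each of the fields $F_U, F_P, F_{U,P}$ is a subfield of $F$'s relevant completion, the torus $S$ over each of these fields is split by a finite extension whose residue field is finite over $k$ or $k((u))$. Second, I would analyze the residue fields: for $P$ a closed point of $X$, $\kappa(P)$ is finite over $k$, so $\wh R_P$ is a two-dimensional complete local ring and $F_P$ is the fraction field of a complete regular local ring of dimension $2$ with residue field finite over $k$; for $U\in\UU$, $\wh R_U$ has residue ring $k[U]$ and $F_U$ sits over the function field of $U$, a curve over a finite extension of $k$; for branches $(U,P)$, $F_{U,P}$ is a complete discretely valued field with residue field $F_{\bar U, P}$, itself the fraction field of a complete DVR over $\kappa(P)$. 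Third, I would invoke the specialization machinery from Properties~\ref{properties_flasque} together with known finiteness theorems for $H^1$ of tori: using Properties~\ref{properties_flasque}(\ref{torus coho}), for a complete local ring $A$ one has $H^1_{\et}(A,S)\isom H^1(k',S)$ where $k'$ is the residue field, so the computation is pushed down to $H^1(k',S)$ for $k'$ a finite extension of $k$ or of $k((u))$; then finiteness of $H^1$ of a torus over such fields follows from the hypothesis (finitely generated over $\Q$, or a local field of characteristic zero — in which case $k((u))$ and its finite extensions are again of the type for which $H^1$ of a torus is finite, e.g.\ by results on Galois cohomology of tori over such fields). For the field $F_U$, which is not the fraction field of a complete local ring but of a ring with residue ring a function field of a curve, I would instead use that $H^1(F_U,S)$ injects into a cohomology group over the function field $\kappa(\bar U)$ via the structure of $\wh R_U$, or appeal to a patching/devissage argument already in the literature, and again land on cohomology over a field finitely generated over $\Q$ or over a local field.

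The main obstacle I expect is the treatment of $F_U$ and $F_{U,P}$: unlike $F_P$, these are not fraction fields of complete \emph{local} rings of the simplest kind, so Properties~\ref{properties_flasque}(\ref{torus coho}) does not apply directly, and one must either carefully track the ring-theoretic structure (the $t$-adic completion $\wh R_U$ has a nice residue ring $k[U]$, a Dedekind-type ring, rather than a field) or combine the Artin--Mazur/Grothendieck-type comparisons with a Mayer--Vietoris-style argument. In practice I would handle $F_U$ by using that $H^1(F_U,S)$ is controlled by $H^1$ of $S$ over the function field of $\bar U$ (a curve over a field of the allowed type) together with ramification data at the finitely many points of $\bar U\setminus U$ and over $t$, all of which contribute finite groups; and handle $F_{U,P}$ via the exact sequence relating $H^1(F_{U,P},S)$ to $H^1$ of the residue field and the group of characters (a residue map), reducing to $H^1$ over fields already handled. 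Once every factor in the double coset formula is known to be finite, finiteness of the quotient $\Sha_\PP(F,T)$ is immediate, completing the proof.
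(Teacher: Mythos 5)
Your reduction to the middle factor is correct: the double coset description in Theorem~\ref{H1Scosets}(\ref{F-tori_a}) exhibits $\Sha_\PP(F,T)$ as a quotient of the finite product $\prod_{(U,P)} H^1(F_{U,P},S)$, so you only need finiteness of each $H^1(F_{U,P},S)$. The subsequent discussion of $F_U$ and $F_P$ is unnecessary (and in fact $F_U$ is the hardest case; it is good that you can avoid it).

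However, the main technical route you propose for $H^1(F_{U,P},S)$ has a genuine gap. You want to invoke Properties~\ref{properties_flasque}(\ref{torus coho}) (or~(\ref{flasque coho surj})) to push $H^1$ down to the residue field of $\wh R_{U,P}$ and then further down to a finite extension of $k$. But these properties apply to a torus \emph{defined over the ring $A$}, and in Theorem~\ref{Ftorus-finite} the torus $T$ (hence also $S$ in its flasque resolution) is only given over $F$ --- there is no hypothesis that it extends over $\XX$, over $R$, or over $\wh R_{U,P}$. Without such an extension the specialization machinery simply does not apply, and an arbitrary $F$-torus need not extend to a torus over $\wh R_{U,P}$. (Contrast with Section~\ref{sec4}, where the stronger hypothesis that $T$ is an $\XX$-torus or $R$-torus is in force precisely so that specialization can be used.) So as written, the proposed reduction to $H^1$ over the residue field is not justified.

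The idea you gesture at near the end --- ``handle $F_{U,P}$ via the exact sequence relating $H^1(F_{U,P},S)$ to $H^1$ of the residue field and the group of characters (a residue map)'' --- is in fact the right replacement, but it needs to be pinned down and it is not a triviality. The paper does this by noting the explicit Cohen-structure description $F_{U,P}\simeq \ell((u))((t))$ with $\ell/k$ finite, and then applying \cite[Thm.~3.2]{CGP}: if $L$ has characteristic zero and $H^1(L,S)$ is finite for every flasque $L$-torus $S$, then the same holds over $L((t))$ for every flasque $L((t))$-torus (crucially, one not assumed to come from $L[[t]]$). Applying this twice reduces to $H^1(\ell,S)$, where finiteness follows from \cite[Thm.~1, p.~192]{CTS77} (for $\ell$ finitely generated over $\Q$) or \cite[Thm.~6.14]{PR94} (for $\ell$ a local field). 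To repair your argument you would need to either cite such a theorem or reprove it, replacing the specialization step by the genuine d\'evissage for tori over Laurent series fields.
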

 \begin{proof}
 We claim that for each $(U,P)$ and any flasque torus~$S$, $H^1(F_{U,P},S)$ is finite. Since there are only finitely many pairs $(U, P)$,
 Theorem~\ref{H1Scosets} will then imply the finiteness of $\Sha_\PP(F, T)$.

Each field $F_{U,P}$ is  of the shape $\ell((u))((t))$, with $\ell$ a finite field extension of $k$. By \cite[Thm. 3.2]{CGP}, if $L$ is a field of characteristic zero such that $H^1(L,S)$ is finite for {\em every} flasque torus $S$ over $L$, then $H^1(L((t)),S)$ is finite for every flasque torus $S$ over $L((t))$.  So to prove our claim, it suffices to show that $H^1(\ell,S)$ is finite for every flasque torus~$S$. By assumption, $\ell$ is either finitely generated over ${\mathbb Q}$ or local. Finiteness in the former case follows from \cite[Thm.~1, p.~192]{CTS77}. Finiteness in the latter case is well known for any connected linear algebraic group (see e.g. \cite{PR94}, Theorem~6.14).
  \end{proof}

 See \cite[Thm. 3.4]{CGP} and the remark following it
for further examples of fields $L$ with the property that $H^1(L,S)$ is finite
 for every flasque torus $S$.

%==========================================================
 \section{The case of $\XX$-tori}\label{sec4}
%===========================================================
In this section, the torus under consideration is defined over a normal crossings model $\XX$ of the semi-global field. We first show that the different obstruction sets all coincide in that case. In the second part we give a description of the terms occurring in the double coset formula Theorem~\ref{H1Scosets}.

%--------------------------------------------------------------------------------------------------------
 \subsection{Comparing various Tate-Shafarevich groups}\label{varioushas}
%--------------------------------------------------------------------------------------------------------

In this subsection we prove a local factorization result for tori defined over a normal crossings model of the semi-global field. Using this, we show that the different obstruction sets defined in Section~\ref{lgp} all coincide.

 \begin{lemma}\label{residue}
 Let $A$ be a complete regular local ring of dimension~$2$ with residue field $k$ and field of fractions $L$. Let $L_\pi$ be the completion of $L$ at a regular prime $\pi$ of $A$.  Moreover, let $T$ be a
 torus over $A$, and let $1\rightarrow S\rightarrow Q\rightarrow T\rightarrow 1$ be a flasque resolution over $A$.
Let $\theta:H^1(L_\pi,S)\to H^1(k,S)$ be the composition of the specialization maps
$H^1(L_\pi,S) \to H^1(k(\pi),S) \to H^1(k,S)$.  Then $\theta$ is an isomorphism and the diagram
 $$\xymatrix{
T(L_\pi) \ar[r] &H^1(L_\pi,S)\ar[dd]^\theta\\
 T(A)\ar[u] \ar[d]         &\\
 T(k)\ar[r]&H^1(k,S)}
$$
 commutes. Here the horizontal maps are the coboundary maps in the cohomology sequence coming from the flasque resolution.
 \end{lemma}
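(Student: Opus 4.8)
The plan is to reduce everything to the two specialization maps that make up $\theta$, both of which come from Properties~\ref{properties_flasque}, and then to check compatibility by working inside the étale cohomology of $A$ rather than with the field-level groups directly. First I would set up the two codimension-one specializations separately. The regular prime $\pi$ gives a regular local ring $A_{(\pi)}$ of dimension one (a DVR) with fraction field $L$ and residue field $k(\pi)$, its completion $\widehat{A_{(\pi)}}$ has fraction field $L_\pi$ and the same residue field $k(\pi)$; since $S$ is flasque over $A$, hence over $A_{(\pi)}$ and over $\widehat{A_{(\pi)}}$, Properties~\ref{properties_flasque}(a) gives isomorphisms $H^1(L_\pi,S)\xleftarrow{\simeq}H^1_{\et}(\widehat{A_{(\pi)}},S)\xrightarrow{\simeq}H^1(k(\pi),S)$, so the first specialization map $H^1(L_\pi,S)\to H^1(k(\pi),S)$ is an isomorphism. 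Next, $A/(\pi)$ is a complete regular local ring of dimension one, i.e.\ a complete DVR, with fraction field $k(\pi)$ and residue field $k$; again $S$ restricts to a flasque torus there (pullback of flasque resolutions, Properties~\ref{properties_flasque} preamble), so $H^1_{\et}(A/(\pi),S)\xrightarrow{\simeq}H^1(k,S)$ and the completeness clause of (a) makes the second specialization map $H^1(k(\pi),S)\to H^1(k,S)$ an isomorphism as well. Composing, $\theta$ is an isomorphism.

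For the commuting square, the idea is that every group in sight maps to or from $H^1_{\et}(A,S)$ and $T(A)$, and the square factors through those. Concretely, the flasque resolution over $A$ gives a coboundary $T(A)\to H^1_{\et}(A,S)$ (using $H^1_{\et}(A,Q)=0$, which holds since $Q$ is quasitrivial over the regular ring $A$; this is the $A$-level analogue of Hilbert 90 used repeatedly above, via Properties~\ref{properties_flasque}(a) applied to $Q$ and the completeness/regularity, or directly from \cite{CTS87}). There is a commuting square relating $T(A)\to H^1_{\et}(A,S)$ to $T(L_\pi)\to H^1(L_\pi,S)$ via the restriction maps $T(A)\to T(L_\pi)$ (through $T(A_{(\pi)})\to T(\widehat{A_{(\pi)}})\to T(L_\pi)$) and $H^1_{\et}(A,S)\to H^1(L_\pi,S)$, by functoriality of the coboundary for the flat base change $A\to \widehat{A_{(\pi)}}\to L_\pi$; and under the identification $H^1_{\et}(A,S)\xrightarrow{\simeq}H^1(k,S)$ (restriction along $A\to A/\mm_A\to k$, an isomorphism by (a) since $A$ is complete), this $H^1_{\et}(A,S)\to H^1(L_\pi,S)$ is precisely $\theta^{-1}$ followed by nothing — more carefully, the three restrictions $H^1_{\et}(A,S)\to H^1(k,S)$, $H^1_{\et}(A,S)\to H^1_{\et}(\widehat{A_{(\pi)}},S)\cong H^1(L_\pi,S)$, and the two pieces of $\theta$ all fit into one commuting diagram of restriction maps among étale cohomology of $A$, $\widehat{A_{(\pi)}}$, $A/(\pi)$, their residue rings, and $k$, simply because étale cohomology is functorial and $k$ is the common residue field. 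Similarly the coboundary square for $T(A)\to H^1_{\et}(A,S)$ and $T(k)\to H^1(k,S)$ commutes by functoriality along $A\to k$. Chaining these two functoriality squares together with $T(A)\to T(L_\pi)$ on one side and $T(A)\to T(k)$ on the other yields exactly the asserted square.

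The main obstacle I expect is purely bookkeeping: making sure that the composite $H^1(L_\pi,S)\to H^1(k(\pi),S)\to H^1(k,S)$ defined as ``$\theta$'' genuinely agrees, as a single map, with the restriction $H^1_{\et}(A,S)\xrightarrow{\simeq}H^1(L_\pi,S)$ inverted and then pushed to $H^1(k,S)$ — i.e.\ that the two ways of getting from $H^1_{\et}(A,S)$ down to $H^1(k,S)$ (directly, versus via $\widehat{A_{(\pi)}}$ and $A/(\pi)$ and their residue fields) coincide. This is a compatibility of specialization maps in a $2$-dimensional regular local ring, flagged already in the paragraph after Properties~\ref{properties_flasque}; it follows from the functoriality of étale cohomology applied to the commutative square of local rings $A\to A/(\pi)$, $A\to \widehat{A_{(\pi)}}$, $\widehat{A_{(\pi)}}\to \kappa$, $A/(\pi)\to\kappa$ where $\kappa$ is the residue field $k(\pi)$ (and then one more step down to $k$), once one checks those squares of ring maps commute, which they do by construction. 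Everything else — injectivity/surjectivity statements, vanishing of $H^1$ for quasitrivial tori — is quoted directly from Properties~\ref{properties_flasque} and the reminders of Section~\ref{reminders}.
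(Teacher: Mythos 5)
Your proposal is correct and takes essentially the same approach as the paper: both establish that $\theta$ is an isomorphism via the completeness of the DVRs $\widehat{A}_{(\pi)}$ and $A/(\pi)$, and both prove commutativity by functoriality of the coboundary $T(\cdot)\to H^1(\cdot,S)$ together with the observation that the ring compositions $A\to \widehat{A}_{(\pi)}\to k(\pi)$ and $A\to A/(\pi)\to k(\pi)$ coincide. Your extra scaffolding via the hub $H^1_{\et}(A,S)\simeq H^1(k,S)$ (and the aside about $H^1_{\et}(A,Q)=0$) is not needed — the paper gets by with the two étale groups $H^1_{\et}(\widehat{A}_{(\pi)},S)$ and $H^1_{\et}(A/(\pi),S)$ alone — but the underlying argument is the same.
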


 \begin{proof}
The two specialization maps $H^1(L_\pi,S) \to H^1(k(\pi),S)$ and $H^1(k(\pi),S) \to H^1(k,S)$ are isomorphisms by the completeness of the discrete valuation rings $\wh A_{(\pi)}$ and $A/(\pi)$, where the former ring is the completion of the localization of $A$ at $\pi$; see the comment after Properties~\ref{properties_flasque}.  Thus $\theta$ is an isomorphism.  For the commutativity of the diagram, recall that the above two specialization maps are respectively given by
\[H^1(L_\pi,S) \overset{\simeq}{\longleftarrow}
 H^1_{\text{\'et}}(\wh A_{(\pi)},S)\to H^1(k(\pi),S), \
 \ H^1(k(\pi),S) \overset{\simeq}{\longleftarrow}
  H^1_{\text{\'et}}(A/(\pi),S)\to H^1(k,S).\]
The commutativity now easily follows using that the compositions $T(A)\rightarrow
T(\wh A_{(\pi)})\rightarrow T( k(\pi)))$ and $T(A)\rightarrow T(A/(\pi))\rightarrow T(k(\pi))$ define the same maps and from the functoriality of the coboundary map $T(\cdot)\rightarrow H^1(\cdot ,S)$.
\end{proof}

The isomorphism $\theta$, which is a composition of two specialization maps, is compatible with the
specialization map $H^1(L,S) \to H^1(k,S)$ described after Properties~\ref{properties_flasque}.  That is,
the latter map is the same as the composition $H^1(L,S)\rightarrow H^1(L_\pi,S)\overset{\theta}{\rightarrow} H^1(k,S)$, since the restriction maps to the residue fields are isomorphisms; see Properties~\ref{properties_flasque}(\ref{torus coho}).
We thus also call $\theta$ the {\em specialization map} from  $H^1(L_\pi,S)$ to $H^1(k,S)$.

Now let $F$ be a semi-global field with normal crossings model $\XX$ and closed fiber $X$. Let $\PP$ and $\UU$ be as in Section~\ref{setup}. We next use the specialization map to prove a factorization lemma.
 \begin{lemma}\label{factorization}
Let $T$ be a
torus
over $\XX$.
Let $1 \rightarrow S \rightarrow Q\overset{\varphi}{\rightarrow} T\rightarrow 1$ be a flasque resolution of $T$ over $\XX$. Let $U$ be a nonempty affine open subset of an irreducible component of the reduced closed fiber $X^{\operatorname{red}}$ of $\XX$ such that $U$ does not meet any other components of $X^{\operatorname{red}}$. Then for any $P$ in the complement $\bar U\setminus U$ of $U$ in its closure, $T(F_{U,P})=T(\wh R_P)\varphi(Q(F_{U,P}))$.
\end{lemma}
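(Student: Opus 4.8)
The plan is to exploit the fact that $F_{U,P}$ is the fraction field of a complete discrete valuation ring whose residue field is $F_U$-related, and then use the compatibilities of specialization maps established in Lemma~\ref{residue}. Concretely, write $\wh R_{U,P}$ for the complete DVR with fraction field $F_{U,P}$; by the patching setup it is the completion of the localization of $\wh\OO_{\XX,P}$ at the codimension-one point $\pi$ corresponding to the generic point of $\bar U$. Since $U$ meets no other component of $X^{\operatorname{red}}$, the closed fiber of $\wh\OO_{\XX,P}$ near that point is a regular prime $\pi$, so $A:=\wh\OO_{\XX,P}$ is a complete regular two-dimensional local ring with $L_\pi = F_{U,P}$ and residue field $\kappa(P)$, which is exactly the situation of Lemma~\ref{residue}. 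I would also note $\wh R_P = A$ (the complete local ring at $P$) and that the residue field of $\wh R_{U,P}$ is $k(\pi)=\kappa(U)$.

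The core of the argument is a diagram chase. From the flasque resolution $1\to S\to Q\xrightarrow{\varphi} T\to 1$ over $\XX$ we get, by taking cohomology over $F_{U,P}$ and over $\wh R_P$ (using $H^1(F_{U,P},Q)=0$ and, by Properties~\ref{properties_flasque}(\ref{torus coho}), $H^1_{\et}(\wh R_P,Q)\hookrightarrow H^1(F_P,Q)=0$), exact sequences
$$ Q(F_{U,P}) \xrightarrow{\varphi} T(F_{U,P}) \xrightarrow{\delta} H^1(F_{U,P},S) \to 0, \qquad Q(\wh R_P) \xrightarrow{\varphi} T(\wh R_P) \to H^1_{\et}(\wh R_P,S)\to 0. $$
So to prove $T(F_{U,P}) = T(\wh R_P)\,\varphi(Q(F_{U,P}))$ it suffices to show that $\delta\big(T(\wh R_P)\big)$ is all of $H^1(F_{U,P},S)$; equivalently, that the composite $T(\wh R_P)\to T(F_{U,P})\xrightarrow{\delta} H^1(F_{U,P},S)$ is surjective. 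Here is where Lemma~\ref{residue} enters: the specialization isomorphism $\theta\colon H^1(F_{U,P},S)\xrightarrow{\sim} H^1(\kappa(P),S)$ fits into a commuting square with the coboundary maps $T(F_{U,P})\to H^1(F_{U,P},S)$ and $T(\kappa(P))\to H^1(\kappa(P),S)$, factoring through $T(\wh R_P)$. Since $\theta$ is an isomorphism, surjectivity of $T(\wh R_P)\to H^1(F_{U,P},S)$ is equivalent to surjectivity of $T(\wh R_P)\to T(\kappa(P))\to H^1(\kappa(P),S)$. But $T(\wh R_P)\to T(\kappa(P))$ is surjective because $\wh R_P$ is a complete (hence henselian) local ring and $T$ is smooth, so reduction mod the maximal ideal is surjective on points; and $T(\kappa(P))\to H^1(\kappa(P),S)$ is surjective by the cohomology sequence of the flasque resolution over the field $\kappa(P)$ (using $H^1(\kappa(P),Q)=0$). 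Composing these gives the desired surjectivity, and chasing back through the first exact sequence yields $T(F_{U,P})=T(\wh R_P)\varphi(Q(F_{U,P}))$.

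I expect the main obstacle — really the only nontrivial point — to be correctly identifying the local ring geometry: one must check that $\wh R_P=\wh\OO_{\XX,P}$ is regular of dimension two, that the branch $(U,P)$ corresponds to a \emph{regular} prime $\pi$ (this is exactly what the hypothesis ``$U$ does not meet any other component of $X^{\operatorname{red}}$'' buys us, since then $\bar U$ is one of the normal-crossings components passing through $P$ and cuts out a regular curve locally), and that $\wh R_{U,P}$ is its completion at $\pi$ so that Lemma~\ref{residue} applies verbatim with $A=\wh R_P$, $L_\pi = F_{U,P}$, and residue field $\kappa(P)$. Everything else is the two-row diagram chase above together with the standard smoothness/henselian surjectivity $T(\wh R_P)\twoheadrightarrow T(\kappa(P))$ and the vanishing of $H^1$ of quasitrivial tori, all of which are recalled in the excerpt.
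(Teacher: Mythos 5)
Your proof is correct and follows essentially the same argument as the paper's, which draws the same commutative diagram (rows given by the flasque-resolution cohomology sequences over $F_{U,P}$ and over $\kappa(P)$, a middle column through $T(\wh R_P)$, and the two rows linked by the isomorphism $\theta$ of Lemma~\ref{residue}) and then chases, using exactly the surjectivity of $T(\wh R_P)\to T(\kappa(P))$ and the vanishing of $H^1(\cdot,Q)$ for the quasitrivial torus. One minor slip in a passing remark: the residue field of $\wh R_{U,P}$ (the field $k(\pi)$ of Lemma~\ref{residue}) is $\widehat{k(U)}_P$, the completion of $k(U)$ at $P$, not $\kappa(U)=\OO(\bar U)$; this plays no role in your argument.
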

\begin{proof}
Consider the following diagram
$$\xymatrix{
Q(F_{U,P})\ar[r]^\varphi&T(F_{U,P})\ar[r]& H^1(F_{U,P},S)\ar[r]\ar[dd]^\theta&H^1(F_{U,P},Q)=0\\
&T(\wh R_P)\ar[u]\ar[d]\\
Q(k(P))\ar[r]&T(k(P))\ar[r]\ar[d]& H^1(k(P),S)\ar[r]&H^1(k(P),Q)=0\\
&1}
$$
where $\theta$ is the isomorphism given by Lemma~\ref{residue}. The two cohomology groups on the right vanish since $Q$ is quasi-trivial.
The assertion now follows from an easy diagram chase.
\end{proof}

  \begin{prop}  \label{FU} Let $F$ be a semi-global field with normal crossings model $\XX$, and let $T$ be a  torus  over $\XX$.
Let $U$ be a nonempty affine open subset of an irreducible component of the reduced closed fiber $X^{\operatorname{red}}$ of $\XX$ such that $U$ does not meet any other components of $X^{\operatorname{red}}$.
Let  $P \in U$ be a closed point  and $V = U \setminus \{ P \}$.
Then $\operatorname{ker}(H^1(F_U, T) \to H^1(F_V, T) \times  H^1(F_P, T))$ is trivial.
\end{prop}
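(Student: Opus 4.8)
The plan is to derive the vanishing of this kernel from the factorization Lemma~\ref{factorization}, together with the fact that finite‑type affine schemes over $F_U$ glue along a branch (the patching results of \cite{HHK1}). At the outset I would replace the finite set $\PP$ by $\PP\cup\{P\}$: this is still an admissible choice of points in the sense of Section~\ref{setup} (it still contains every point of $X^{\operatorname{red}}$ at which components meet, since $U$ meets no other component), and for it the pair $(V,P)$ is a branch — indeed $\bar V=\bar U$, so $V$ again meets no other component and $P\in\bar V\setminus V$. In particular the fields $F_V$, $F_P$, $F_{V,P}$ and the inclusions $F_U\subseteq F_V\subseteq F_{V,P}$, $F_U\subseteq F_P\subseteq F_{V,P}$ are all at our disposal.

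The first step is to prove the equality $T(F_{V,P})=T(F_V)\cdot T(F_P)$ of subgroups of $T(F_{V,P})$. Fix a flasque resolution $1\to S\to Q\xrightarrow{\varphi}T\to 1$ of $T$ over $\XX$. Applying Lemma~\ref{factorization} with $V$ in the role of $U$ yields $T(F_{V,P})=T(\wh R_P)\,\varphi\bigl(Q(F_{V,P})\bigr)$. On the other hand $Q$ is a quasitrivial $F$‑torus, so $\Sha_{\PP\cup\{P\}}(F,Q)\subseteq H^1(F,Q)=0$; hence by Theorem~\ref{doublecosetG} applied to $Q$ the natural map $\prod_{U'}Q(F_{U'})\times\prod_{P'}Q(F_{P'})\to\prod_{(U',P')}Q(F_{U',P'})$ is onto, and looking at the $(V,P)$‑component gives $Q(F_{V,P})=Q(F_V)\cdot Q(F_P)$. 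Since $\varphi(Q(F_V))\subseteq T(F_V)$, $\varphi(Q(F_P))\subseteq T(F_P)$, $T(\wh R_P)\subseteq T(F_P)$, and $T$ is commutative, combining the two identities gives $T(F_{V,P})\subseteq T(F_V)\cdot T(F_P)$, hence the claimed equality.

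The second step is a gluing argument. Let $\alpha$ lie in the kernel in question, and let $Z$ be a $T$‑torsor over $F_U$ representing $\alpha$; since $T$ is affine, $Z$ is an affine $F_U$‑scheme of finite type. By hypothesis $Z(F_V)\neq\emptyset$ and $Z(F_P)\neq\emptyset$; choose $z_V\in Z(F_V)$ and $z_P\in Z(F_P)$, and let $\tau\in T(F_{V,P})$ be the unique element with $z_V=\tau\cdot z_P$ over $F_{V,P}$. Write $\tau=\tau_V\tau_P$ with $\tau_V\in T(F_V)$ and $\tau_P\in T(F_P)$, using the first step. Then, $T$ being commutative, the points $\tau_V^{-1}\cdot z_V\in Z(F_V)$ and $\tau_P\cdot z_P\in Z(F_P)$ have the same image in $Z(F_{V,P})$. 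By the patching results of \cite{HHK1}, the configuration $(F_U;F_V,F_P,F_{V,P})$ attached to the branch $(V,P)$ is a patching problem, so $Z(F_U)=Z(F_V)\times_{Z(F_{V,P})}Z(F_P)$; therefore $Z(F_U)\neq\emptyset$, i.e.\ $\alpha=0$.

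The step I expect to require the most care is this patching input: one must know that affine $F_U$‑schemes of finite type satisfy the fibre‑product property over the square attached to the branch $(V,P)$ obtained from $U$ by deleting $P$ — equivalently, that $F_U=F_V\times_{F_{V,P}}F_P$ and that finite‑dimensional $F_U$‑vector spaces patch over this square. This is of the type established in \cite{HHK1}, but it is applied ``locally over $F_U$'' rather than over the semi‑global field itself, which is precisely why the preliminary step of turning $(V,P)$ into a genuine branch (so that Lemma~\ref{factorization} and Theorem~\ref{doublecosetG} apply) had to be arranged first; granting it, the remainder is the routine torsor chase indicated above, the essential arithmetic content being concentrated in Lemma~\ref{factorization}.
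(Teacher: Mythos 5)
Your proof is correct and follows essentially the same route as the paper: both deduce $T(F_{V,P})=T(F_V)T(F_P)$ from Lemma~\ref{factorization} together with a factorization of the quasitrivial torus $Q$ over the branch $(V,P)$, and then conclude by a local patching/gluing step for $T$-torsors over the square $(F_U;F_V,F_P,F_{V,P})$. The paper obtains the $Q$-factorization from rationality of $Q$ via \cite[Prop.~3.9 and Cor.~3.15]{HHK5} rather than from Theorem~\ref{doublecosetG}, and replaces your hand-made gluing argument by a citation of \cite[Thm.~2.13 and Prop.~3.9]{HHK5}; the fibre-product property you correctly flag as the one delicate input is exactly what \cite[Prop.~3.9]{HHK5} supplies.
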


\begin{proof}
Let $1 \to S \to Q \to T  \to 1$ be a flasque resolution of $T$ over the ring $\wh R_P$ defined in Section~\ref{setup} (we continue to use $T$ to denote the restriction of $T$ over $\wh R_P$).
By Lemma~\ref{factorization} applied to $P$ and $V$, any element of $T(F_{V, P})$
is  the product of an element of $T(\wh R_P)  \subset T(F_{P})$ and an element in
the image of $Q(F_{V,P})$. Since $Q$ is a quasi-trivial $F$-torus, $Q$ is rational, and thus
$Q(F_{V,P}) = Q(F_{V}) Q(F_{P})$
by \cite[Prop. 3.9 and Corollary 3.15]{HHK5}.
Thus $T(F_{V, P}) = T(F_V)T(F_P)$.
By \cite[Theorem 2.13 and Proposition 3.9]{HHK5}, this concludes the proof.
\end{proof}

The next theorem shows that in the case of tori, it is sufficient to consider $ \Sha_\PP(F, T)$ for some $\PP$ (rather than taking the union over all possible $\PP$).
\begin{theorem}\label{allshaTequal}
\label{shas} Let $F$ be a semi-global field with normal crossings model~$\XX$ and let $\PP $ be as in Section~\ref{setup}.
Let $T$ be a  torus  over $\XX$.
Then the subgroups  $ \Sha_\PP(F, T)$,  $\Sha_X(F, T)$,
 $\Sha(F, T) $  and $ \Sha_\XX(F, T)$  of $H^1(F,T)$ all coincide.
\end{theorem}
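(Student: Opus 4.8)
The plan is to deduce the statement from Proposition~\ref{FU}, by showing that the subgroup $\Sha_\PP(F,T)$ of $H^1(F,T)$ does not actually depend on the choice of $\PP$. This suffices: for every admissible $\PP$ one has $\Sha_\PP(F,T)\subseteq\Sha_X(F,T)\subseteq\Sha(F,T)\subseteq\Sha_\XX(F,T)$, and --- since $T$ is defined over $\XX$ --- \cite[Thm.~4.2(ii)]{CTPS1} (applied in the form noted above, which only needs the group to be defined over $\XX$) gives $\Sha_X(F,T)=\Sha(F,T)=\Sha_\XX(F,T)=\bigcup_{\PP'}\Sha_{\PP'}(F,T)$, the union running over all finite nonempty sets $\PP'$ of closed points of $X$ containing the crossing points of $X^{\operatorname{red}}$; so once all $\Sha_{\PP'}(F,T)$ coincide, they coincide with all four groups in the statement. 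To show that $\Sha_{\PP_1}(F,T)=\Sha_{\PP_2}(F,T)$ for any two admissible $\PP_1,\PP_2$, I would pass to the admissible set $\PP_1\cup\PP_2$, which contains both, so it is enough to compare $\Sha_\PP(F,T)$ with $\Sha_{\PP'}(F,T)$ when $\PP\subseteq\PP'$; and, adjoining one point at a time, one reduces to the case $\PP'=\PP\cup\{P_0\}$ for a single closed point $P_0\notin\PP$.

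In that case, since $\PP$ already contains every point of $X^{\operatorname{red}}$ where two components meet, $P_0$ is not a crossing point; it therefore lies on a unique component and belongs to a unique $U_0\in\UU$, and $V_0:=U_0\smallsetminus\{P_0\}$ is again a nonempty irreducible affine open curve. Passing from $\PP$ to $\PP'$ simply deletes $U_0$ from $\UU$ and adjoins $V_0$ and $P_0$, leaving all other members of $\UU\cup\PP$ unchanged. One inclusion is formal: the field inclusions $F_{U_0}\subseteq F_{V_0}$ (as $V_0\subseteq U_0$ is open) and $F_{U_0}\subseteq F_{P_0}$ (as $P_0\in U_0$), which are part of the patching setup and are already built into the statement of Proposition~\ref{FU}, show that any class dying over $F_{U_0}$ dies over $F_{V_0}$ and over $F_{P_0}$; hence $\Sha_\PP(F,T)\subseteq\Sha_{\PP'}(F,T)$. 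For the reverse inclusion, take $\alpha\in\Sha_{\PP'}(F,T)$; then $\alpha|_{F_{V_0}}=0$ and $\alpha|_{F_{P_0}}=0$. The curve $U_0$ is affine (a component of the affine curve $X^{\operatorname{red}}\smallsetminus\PP$) and meets no other component of $X^{\operatorname{red}}$, so Proposition~\ref{FU} applies to $U_0$, $P_0$, $V_0=U_0\smallsetminus\{P_0\}$ and shows that the kernel of $H^1(F_{U_0},T)\to H^1(F_{V_0},T)\times H^1(F_{P_0},T)$ is trivial; since $\alpha|_{F_{U_0}}$ maps to $(0,0)$ there, $\alpha|_{F_{U_0}}=0$, and together with the vanishing of $\alpha$ over the unchanged fields this gives $\alpha\in\Sha_\PP(F,T)$. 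Thus $\Sha_\PP(F,T)=\Sha_{\PP'}(F,T)$, completing the reduction and hence the proof.

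The real content here is entirely in Proposition~\ref{FU} (hence in the factorization Lemma~\ref{factorization} and the patching factorization results of \cite{HHK5}) and in the cited descriptions of $\Sha_X$, $\Sha$, and $\Sha_\XX$ for groups defined over $\XX$; the argument above is bookkeeping. The only step that requires a little care --- and the point at which the standing hypothesis on $\PP$ is used --- is checking that deleting a single non-crossing closed point from a member of $\UU$ again produces a single admissible affine open curve meeting no other component, so that Proposition~\ref{FU} is legitimately applicable at each step of the one-point reduction.
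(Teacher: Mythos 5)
Your proof is correct, and it leans on the same key ingredient (Proposition~\ref{FU}) as the paper's own argument; the difference is organizational. The paper notes that, by \cite[Thm.~4.2(ii)]{CTPS1} and the chain of inclusions, it suffices to prove $\Sha_X(F,T)\subseteq\Sha_\PP(F,T)$, and does so directly: for $\zeta\in\Sha_X(F,T)$ and $U\in\UU$, it first invokes \cite[Prop.~5.8]{HHK3} to produce a nonempty open $V\subseteq U$ with $\zeta\otimes F_V=0$, and then iterates Proposition~\ref{FU} over the finitely many points of $U\setminus V$ (each of which is killed because $\zeta\in\Sha_X$). You instead prove the equivalent statement that $\Sha_{\PP'}(F,T)$ is independent of the admissible set $\PP'$, via the same one-point-at-a-time application of Proposition~\ref{FU}, and then conclude using the identity $\bigcup_{\PP'}\Sha_{\PP'}=\Sha_X=\Sha=\Sha_\XX$. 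Your route replaces the explicit appeal to \cite[Prop.~5.8]{HHK3} with the union identity from \cite[Cor.~5.9]{HHK3} (also cited by the paper), and in exchange you must separately check the "easy" inclusion $\Sha_\PP\subseteq\Sha_{\PP'}$, which you do correctly from the field maps $F_{U_0}\to F_{V_0}$ and $F_{U_0}\to F_{P_0}$ implicit in Proposition~\ref{FU}. Net effect: same substance, slightly different bookkeeping; both ultimately reduce to Proposition~\ref{FU} and the input from \cite{CTPS1} and \cite{HHK3}.
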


\begin{proof}
By the chain of equalities given just before Theorem~\ref{doublecosetG}, it suffices to show that $\Sha_X(F, T)\subseteq \Sha_\PP(F, T)$.
So let $\zeta \in \Sha_X(F, T)$.  Then $\zeta \otimes F_P$ is trivial for every point $P \in X$ by definition (including generic points of components of~$X$).
If $U \in \UU$, then by \cite[Proposition 5.8]{HHK3},
  there exists a
 non-empty open subset $V$ of $U$ such that $\zeta \otimes F_V$ is trivial.
 Since $U \setminus V$ is a finite set, by  Proposition \ref{FU}, $\zeta  \otimes F_U$ is trivial.
 Hence $\zeta \in \Sha_\PP(F, T)$.
\end{proof}

Combining this with Theorem~\ref{Ftorus-finite}, we immediately obtain:
  \begin{theorem}\label{finite}
  Let $k$ be either a finitely generated field
   over ${\mathbb Q}$ or a local field of characteristic zero, and let $K=k((t))$.
 Let $F$ be a semi-global field over~$K$ with normal crossings model~$\XX$, and let $\PP$ be as in Section~\ref{setup}.
   Let $T$ be a torus over $\XX$.
Then the groups $ \Sha_\PP(F, T)$,  $\Sha_X(F, T)$,
 $\Sha(F, T) $  and $ \Sha_\XX(F, T)$ are all finite.
 \end{theorem}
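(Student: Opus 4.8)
The plan is to deduce this statement directly by combining the two preceding results, so the argument will be very short. The key observation is that a torus $T$ over the model $\XX$ restricts, along the generic point $\Spec F \hookrightarrow \XX$, to an $F$-torus; hence Theorem~\ref{Ftorus-finite} applies verbatim. Under the hypotheses of the present statement, namely $K = k((t))$ with $k$ finitely generated over $\mathbb{Q}$ or a local field of characteristic zero, that theorem yields that $\Sha_\PP(F,T)$ is finite. (Internally this rests on the double coset description of Theorem~\ref{H1Scosets}, together with the finiteness of each $H^1(F_{U,P},S)$ for a flasque torus $S$ — here $F_{U,P}$ has the form $\ell((u))((t))$ and one passes from $\ell$ to $\ell((u))((t))$ via \cite{CGP} — but none of this needs to be reopened.)

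Next I would invoke Theorem~\ref{allshaTequal}: precisely because $T$ is defined over $\XX$ and not merely over $F$, the four subgroups $\Sha_\PP(F,T)$, $\Sha_X(F,T)$, $\Sha(F,T)$ and $\Sha_\XX(F,T)$ of $H^1(F,T)$ all coincide, and in particular the value of $\Sha_\PP(F,T)$ does not depend on the auxiliary choice of $\PP$. Consequently each of these four groups equals $\Sha_\PP(F,T)$, which is finite by the previous paragraph, and the theorem follows.

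There is essentially no obstacle here; the only points requiring care are bookkeeping ones. One must check that the field-theoretic hypotheses in the present statement are literally those of Theorem~\ref{Ftorus-finite} (they are), and one must note that "torus over $\XX$" is exactly the hypothesis under which Theorem~\ref{allshaTequal} identifies the four Tate--Shafarevich groups. Given these observations the argument is immediate, which is why the statement is recorded as a corollary of the two preceding theorems rather than proved from scratch.
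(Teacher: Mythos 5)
Your proposal is correct and matches the paper's own proof exactly: the paper deduces finiteness of $\Sha_\PP(F,T)$ from Theorem~\ref{Ftorus-finite} and then invokes Theorem~\ref{allshaTequal} to conclude. Your additional commentary accurately reflects what those two theorems rest on, but the core argument is the same two-step reduction.
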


 \begin{proof} By Theorem \ref{Ftorus-finite}, $\Sha_\PP(F, T)$ is finite.
 The result then follows from Theorem \ref{allshaTequal}.
 \end{proof}

See Example~\ref{infinite} for an example where $\Sha(F,T)$ is infinite.

%--------------------------------------------------------------------------------------------------------
\subsection{On the value of $\Sha_\PP(F, T)$ when $T$ is an $\XX$-torus}
%--------------------------------------------------------------------------------------------------------

For a torus $T$ over a semi-global field~$F$, Theorem~\ref{H1Scosets} gives a formula for $\Sha_\PP(F, T)$ in terms of a flasque resolution of $T$.

If $T$ is an $\XX$-torus, and $1\rightarrow S\rightarrow Q\rightarrow T\rightarrow 1$ is a flasque resolution of $T$ over $\XX$,
pullback
induces a flasque resolution of $T$ over $F$.
In that situation, we now analyze the maps
$H^1(F_{P},S) \to H^1(F_{U,P},S)$ and $H^1(F_{U},S) \to H^1(F_{U,P},S)$
which were relied on in the formula in Theorem~\ref{H1Scosets}(\ref{F-tori_a}).

In what follows, we  abuse notation:
For $S$ a $Z$-torus and $Y \to Z$ a morphism of schemes, we let $H^1(Y,S)$ denote the \'etale cohomology group $H^1_{\et}(Y,S_{Y})$, where $S_{Y}$ is the $Y$-torus $S \times_{Z}Y$.
For an affine scheme $Z=\Spec A$,
we write $H^1(Z,S)=H^1(A,S)$.

Recall that for a point $P\in X$, $\kappa(P)$ denotes its residue field.

\begin{prop}\label{value}  Let $F$ be a semi-global field with normal crossings model $\XX$, and let $\PP$ and $\UU$ be as in Section~\ref{setup}.
Let $S$ be a flasque torus over $\XX$.
\renewcommand{\theenumi}{\alph{enumi}}
\renewcommand{\labelenumi}{(\alph{enumi})}
\begin{enumerate}
\item\label{value_a} For each point $P\in \PP$, the specialization map $H^1(F_{P},S) \to  H^1(\kappa(P),S)$ is an isomorphism.
 \item\label{value_b} For each branch $(U,P)$,
 the specialization map $\theta:H^1(F_{U,P},S)
\to H^1(\kappa(P),S)$ from Lemma~\ref{residue} is an isomorphism that
is compatible with the isomorphism in (\ref{value_a}), in the sense
 that we have a commutative diagram
 \[\xymatrix{
  H^1(F_P, S) \ar[r]^\sim \ar[d] & H^1(\kappa(P), S) \ar[d]_= \\
  H^1(F_{U, P}, S) \ar[r]^\sim & H^1(\kappa(P), S).
 }\]
\item\label{value_c} For each branch $(U,P)$,
consider the map
\[H^1(F_{U},S) \to H^1(F_{U,P},S) \overset{\sim}\to H^1(\kappa(P),S)\]
induced by the inclusion $F_{U} \subset F_{U,P}$.  For a fixed $U \in \UU$,  the
images of the following maps coincide:
 \begin{itemize}
 \item $H^1(F_{U},S) \to \prod_{(U, P)}H^1(F_{U,P},S) \overset{\sim}\to \prod_{(U, P)}
 H^1(\kappa(P),S)$.
 \item the product of the restriction maps $H^1(\bar U,S) \to \prod_{(U, P)} H^1(\kappa(P),S)$.
 \item the product of the specialization maps $H^1(k(U), S) \to \prod_{(U,
 P)} H^1(\kappa(P), S)$.
  \end{itemize}
 Here $\bar U$ denotes the closure of
 $U$, and the products are taken over all branches $(U,P)$ at points $P \in \PP$ on $\bar U$.
 \end{enumerate}
 \end{prop}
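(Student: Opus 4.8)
The plan is to treat the three parts in order, using Properties~\ref{properties_flasque} and Lemma~\ref{residue} as the main inputs. For part~(\ref{value_a}), recall that $F_P$ is the fraction field of the complete regular local ring $\wh R_P = \wh\OO_{\XX,P}$, whose residue field is $\kappa(P)$. Since $S$ is flasque over $\XX$, its pullback to $\wh R_P$ is a flasque torus; the specialization map factors as $H^1(F_P,S)\overset{\sim}{\longleftarrow}H^1_{\et}(\wh R_P,S)\to H^1(\kappa(P),S)$, and both maps are isomorphisms by Properties~\ref{properties_flasque}(\ref{torus coho}) (the first because $\wh R_P$ is regular local, the second because it is complete). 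For part~(\ref{value_b}), note that $\wh R_{U,P}$ is a complete discrete valuation ring that is the completion of the localization of $\wh R_P$ at the height-one prime cutting out the branch; its residue field is $k(U)$, and the further specialization $H^1(k(U),S)\to H^1(\kappa(P),S)$ is an isomorphism since $k(U)$ is the fraction field of the complete DVR $\wh R_P/(\pi)$ with residue field $\kappa(P)$. Thus $\theta$ is the two-step specialization of Lemma~\ref{residue} and is an isomorphism. The compatibility square is exactly the content of the second displayed remark after Lemma~\ref{residue}: the specialization map $H^1(F_P,S)\to H^1(\kappa(P),S)$ equals the composite $H^1(F_P,S)\to H^1(F_{U,P},S)\overset{\theta}{\to}H^1(\kappa(P),S)$, because the restriction maps from $\wh R_P$ and from $\wh R_{U,P}$ to their residue fields are compatible isomorphisms. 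Applying Lemma~\ref{residue} with $A=\wh R_P$, $L=F_P$, $\pi$ the branch prime gives the square directly.

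For part~(\ref{value_c}), the strategy is to identify all three maps with a single map through intermediate étale cohomology groups over $\bar U$ and its completion. Recall $\wh R_U$ is the $t$-adic completion of $R_U$, the ring of rational functions on $\XX$ regular along $U$; its quotient $\wh R_U/I$ is $k[U]=\OO(U)$, and $\bar U$ is a regular (projective) curve with function field $k(U)$. First I would factor $H^1(F_U,S)\to H^1(F_{U,P},S)\overset{\sim}{\to}H^1(\kappa(P),S)$ through $H^1_{\et}(\wh R_U,S)$: the restriction $H^1_{\et}(\wh R_U,S)\to H^1(F_U,S)$ together with reduction $H^1_{\et}(\wh R_U,S)\to H^1(k[U],S)$, and then restrict $H^1(k[U],S)\to H^1(\kappa(P),S)$. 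By flasqueness (Properties~\ref{properties_flasque}(\ref{torus coho}) and~(\ref{flasque coho surj})) the map $H^1_{\et}(\wh R_U,S)\to H^1(F_U,S)$ is surjective, so the image of $H^1(F_U,S)\to\prod_{(U,P)}H^1(\kappa(P),S)$ equals the image of $H^1(k[U],S)\to\prod_{(U,P)}H^1(\kappa(P),S)$, after checking that the two routes from $H^1_{\et}(\wh R_U,S)$ — down to $F_{U,P}$ then specialize, versus down to $k[U]$ then restrict — agree; this is a compatibility of specialization maps for the branch DVR sitting over both $\wh R_U$ and $\wh R_P$. Next, since $\bar U$ is the regular completion of the affine curve $U=\Spec k[U]$ and $S$ is flasque over $\bar U$, restriction $H^1(\bar U,S)\to H^1(k[U],S)$ is surjective by Properties~\ref{properties_flasque}(\ref{flasque coho surj}), so the image of $H^1(k[U],S)\to\prod H^1(\kappa(P),S)$ equals the image of $H^1(\bar U,S)\to\prod H^1(\kappa(P),S)$; the points $P\in\PP\cap\bar U$ are closed points of $\bar U$ and the maps are the evaluation/restriction maps. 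Finally, the image of $H^1(\bar U,S)\to\prod H^1(\kappa(P),S)$ coincides with the image of the specialization maps $H^1(k(U),S)\to\prod H^1(\kappa(P),S)$: again by flasqueness $H^1(\bar U,S)\to H^1(k(U),S)$ is surjective (Properties~\ref{properties_flasque}(\ref{flasque coho surj})), and for each $P$ the specialization $H^1(k(U),S)\to H^1(\kappa(P),S)$ factors through $H^1_{\et}(\OO_{\bar U,P},S)$, whose image in $H^1(\kappa(P),S)$ agrees with that of $H^1(\bar U,S)$ since restriction $H^1(\bar U,S)\to H^1_{\et}(\OO_{\bar U,P},S)$ need not be surjective but both composites to $H^1(\kappa(P),S)$ have the same image once one knows $H^1(\bar U,S)\to H^1(k(U),S)$ is onto and specialization is functorial along $\bar U\supset\Spec\OO_{\bar U,P}$.

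The main obstacle I expect is the bookkeeping in part~(\ref{value_c}): one must carefully verify that the branch DVR $\wh R_{U,P}$ is simultaneously a localization-completion of $\wh R_U$ and of $\wh R_P$, and that the specialization map $\theta$ of Lemma~\ref{residue} (defined via $\wh R_P$) is compatible with the specialization map coming from $\wh R_U$ and with the specialization map on the curve $\bar U$ (defined via $\OO_{\bar U,P}$). Concretely, all four rings $\wh R_U$, $\wh R_P$, $\OO_{\bar U,P}$, and $\wh R_{U,P}$ receive $S$ as a flasque torus and sit in a commuting diagram of étale cohomology with a common "residue" target $H^1(\kappa(P),S)$; the required compatibilities reduce to functoriality of the isomorphisms in Properties~\ref{properties_flasque}(\ref{torus coho}) for complete (or regular) local rings and their residue fields, which is routine but needs to be spelled out. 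Once these compatibilities are in place, the equality of the three images follows purely formally from the surjectivity statements in Properties~\ref{properties_flasque}(\ref{flasque coho surj}).
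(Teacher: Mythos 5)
Your approach for parts~(\ref{value_a}) and~(\ref{value_b}) matches the paper's and is correct (modulo a small imprecision in~(\ref{value_b}): the residue field of $\wh R_{U,P}$ is the completion $\widehat{k(U)}_P$, not $k(U)$ itself; this is the field the paper calls $k(\pi)$ in Lemma~\ref{residue}, and the argument goes through unchanged once you use the completion).

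In part~(\ref{value_c}), however, there is a genuine gap at the first step. You invoke the surjectivity of $H^1_{\text{\'et}}(\wh R_U,S)\to H^1(F_U,S)$ and the compatibility of the two routes out of $H^1_{\text{\'et}}(\wh R_U,S)$, and then conclude that the image of $H^1(F_U,S)$ in $\prod_{(U,P)}H^1(\kappa(P),S)$ equals the image of $H^1(k[U],S)$. But those two facts only give that the image of $H^1(F_U,S)$ equals the image of $H^1_{\text{\'et}}(\wh R_U,S)$, which is a priori merely \emph{contained in} the image of $H^1(k[U],S)$. To get equality you also need the map $H^1_{\text{\'et}}(\wh R_U,S)\to H^1(k[U],S)$ to be surjective. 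The paper supplies this by observing that it is in fact an isomorphism: $\wh R_U$ is $I$-adically complete with $\wh R_U/I = k[U]$, so one may apply Strano's theorem (\cite{Strano}, Theorem~1). Note that Properties~\ref{properties_flasque}(\ref{torus coho}) does not cover this case, since $\wh R_U$ is not local. Without this ingredient the chain of image-equalities in your write-up breaks at the $F_U$-to-$k[U]$ step. The remainder of your argument for~(\ref{value_c}) — the two surjections $H^1(\bar U,S)\twoheadrightarrow H^1(k[U],S)$ and $H^1(\bar U,S)\twoheadrightarrow H^1(k(U),S)$ coming from Properties~\ref{properties_flasque}(\ref{flasque coho surj}), together with the functoriality checks for specialization along $\bar U\supset\Spec\OO_{\bar U,P}$ — is sound and agrees with the paper's diagram.
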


  \begin{proof} Part~(\ref{value_a}) follows from the completeness of the local ring $\wh R_P$, by Properties~\ref{properties_flasque}.
 For part~(\ref{value_b}), it was shown in Lemma~\ref{residue} that $\theta$ is an isomorphism; and the commutativity of the diagram follows from the compatibility stated just after the proof of that lemma (taking $L=F_P$ and $L_{\pi} = F_{U,P}$).  For part~(\ref{value_c}), consider the commutative diagram

 $$\begin{array}{ccccccc}
  && H^1(F_{U},S)  &   \lra&  \prod_{(U, P)}H^1(F_{U,P},S) & &\\
  &&\twoheaduparrow&&\uparrow\simeq  & \simeq\searrow^\theta &\\
  &&H^1(\wh{R}_{U},S)  & \lra&  \prod_{(U, P)}H^1(\wh{R}_{U,P},S) &  \lra & \prod_{(U, P)}H^1(\kappa(P),S)    \\
  &&\downarrow \simeq& &\downarrow \simeq &      &||   \\
   H^1(\bar U,S) & \twoheadrightarrow &
  H^1(k[U],S) &  \lra & \prod_{(U, P)}H^1(\widehat{k(U)}_{P},S)   & \stackrel{\sim} \lra & \prod_{(U, P)}H^1(\kappa(P),S) \\
  && || && \uparrow&&\\
  && H^1(U,S) & \twoheadrightarrow & H^1(k(U),S),&&
   \end{array}$$
where $\widehat{k(U)}_{P}$ is the completion of $k(U) = k(\bar U)$ at the discrete valuation associated to $P$.
Here the two vertical maps from $\prod_{(U, P)}H^1(\wh{R}_{U,P},S)$ are isomorphisms by Properties~\ref{properties_flasque}.  For each $P$, the composition $H^1(F_{U,P},S) \to H^1(\widehat{k(U)}_{P},S)$
and the map $H^1(\widehat{k(U)}_{P},S) \to H^1(\kappa(P),S)$ are both
specialization isomorphisms.  The map
$H^1(\wh{R}_{U},S) \to H^1(k[U],S)$ is also an isomorphism, because
$\wh{R}_{U}$ is complete with respect to an ideal $I$ such that $\wh{R}_{U}/I=k[U]$ (\cite{Strano}, Theorem~1).  Three other maps in the above diagram are surjections, as indicated, by  Properties~\ref{properties_flasque}(\ref{flasque coho surj}).  It then follows from this diagram that the three maps considered in the statement of~(\ref{value_c}) each have the same image as the map $H^1(k[U],S) \to \prod_{(U, P)}H^1(\kappa(P),S)$.
\end{proof}

The following corollary requires the flasque torus $S$ to be defined over the underlying discrete valuation ring~$R$ rather than just over $\XX$.
A group scheme $S$ over the complete discrete valuation ring $R$ induces $S_{\XX}$ over $\XX$ by pullback,
as well as its restrictions to subschemes of $\XX$.
We will continue to drop the subscripts if these group schemes appear as coefficients in cohomology groups.

Recall that for an affine open $U\subseteq X$, we define $\kappa(U)=\OO (\bar U)$, the ring of functions on the
closure of $U$, and that $\kappa(U)$ naturally embeds into $\kappa(P)$ when $P$ is a point on the closure of~$U$.

\begin{cor}\label{corvalue}  Let $K$ be a complete discretely valued field with ring of integers~$R$, let $F$ be a semi-global field over~$K$ with normal crossings model~$\XX$,  and let $\PP$ and $\UU$ be as in Section~\ref{setup}.  Let $S$ be a flasque $R$-torus.
\renewcommand{\theenumi}{\alph{enumi}}
\renewcommand{\labelenumi}{(\alph{enumi})}
\begin{enumerate}
\item\label{cor_branch} Let $(U,P)$ be a branch. If  $\kappa(U)=\kappa(P)$, then the map
 $H^1(\bar U ,S) \to H^1(\kappa(P),S)$ is surjective,
 hence so is the map $H^1(F_{U},S) \to H^1(\kappa(P),S)$.

 \item\label{cor_component}
  Assume that each component  of the closed fiber $X$ of $\XX$ is $k$-isomorphic to $\P^1_{k}$.
 Let $U\in \UU$, and let $P_{1} , \dots, P_{n}$
be the closed points of $\PP$ which lie on $\bar U$. Then
 the image of the map
 $H^1(F_{U},S)  \to \prod_{j=1}^n H^1(\kappa(P_{j}),S)$
 coincides with the image of the diagonal map
 $$H^1(k,S) \to \prod_{j=1}^n H^1(\kappa(P_{j}),S).$$
 \end{enumerate}
  \end{cor}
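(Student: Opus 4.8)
The plan is to deduce both parts from Proposition~\ref{value}(\ref{value_c}), which identifies the image of $H^1(F_U,S)\to\prod_{(U,P)}H^1(\kappa(P),S)$ with the image of the restriction map $H^1(\bar U,S)\to\prod_{(U,P)}H^1(\kappa(P),S)$. So everything reduces to understanding restriction from $\bar U$ (a regular projective curve over a field) to its closed points, once we know $S$ comes from $R$. For part~(\ref{cor_branch}), I would first observe that since $S$ is pulled back from $R$, and $\bar U$ is a component of $X^{\operatorname{red}}$, the $\bar U$-torus $S_{\bar U}$ is the pullback of the $\kappa(U)$-torus $S_{\kappa(U)}$ under the structure morphism $\bar U\to\Spec\kappa(U)$ (here I use that $\kappa(U)=\OO(\bar U)$ is the field of constants). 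Then $H^1(\kappa(U),S)\to H^1(\bar U,S)$ composed with restriction to the point $P$ is just the restriction map $H^1(\kappa(U),S)\to H^1(\kappa(P),S)$ along the field inclusion $\kappa(U)\subseteq\kappa(P)$. Under the hypothesis $\kappa(U)=\kappa(P)$ this composite is the identity, so $H^1(\bar U,S)\to H^1(\kappa(P),S)$ is surjective; combined with Proposition~\ref{value}(\ref{value_c}) (and its diagram, which also factors through $H^1(\bar U,S)$) this gives surjectivity of $H^1(F_U,S)\to H^1(\kappa(P),S)$.

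For part~(\ref{cor_component}), the extra input is that each component of $X$ is $\P^1_k$, so $\bar U\cong\P^1_k$ and in particular $\kappa(U)=k$. The key geometric point is that a non-empty open $U\subseteq\P^1_k$ is an open subscheme of $\A^1_k$ (after a choice of coordinate, or: $U$ misses at least the point at infinity for a suitable coordinate, since $\PP$ is non-empty and one may move a point of $\PP\cap\bar U$ to $\infty$). By Properties~\ref{properties_flasque}(\ref{aff flasque coho iso}), the composite $H^1(k,S)\to H^1(\A^1_k,S)\to H^1(U,S)$ is an isomorphism; hence every class in $H^1(U,S)$ — equivalently, by the surjection $H^1(U,S)\twoheadrightarrow H^1(k(U),S)$ in the diagram of Proposition~\ref{value} and flasqueness, every class relevant to the image — comes from $H^1(k,S)$. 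Tracing through the bottom row of the diagram in the proof of Proposition~\ref{value}(\ref{value_c}), the image of $H^1(F_U,S)\to\prod_j H^1(\kappa(P_j),S)$ equals the image of $H^1(k[U],S)\to\prod_j H^1(\kappa(P_j),S)$, and since $H^1(k,S)\to H^1(k[U],S)$ is an isomorphism (it factors the isomorphism $H^1(k,S)\xrightarrow{\sim}H^1(U,S)$ through the surjection $H^1(k[U],S)\twoheadrightarrow H^1(U,S)$, forcing all three to be isomorphisms), this image is exactly the image of the diagonal $H^1(k,S)\to\prod_j H^1(\kappa(P_j),S)$, where each component map $H^1(k,S)\to H^1(\kappa(P_j),S)$ is restriction along $k\subseteq\kappa(P_j)$.

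The main obstacle I anticipate is bookkeeping the identification "$S_{\bar U}$ is pulled back from $\kappa(U)$", i.e.\ keeping straight which base the torus lives over at each stage ($R$, $\XX$, $\bar U$, $\kappa(U)=k$, $\kappa(P)$) so that the various specialization and restriction maps in Proposition~\ref{value} are genuinely compatible with the field-restriction maps $H^1(k,S)\to H^1(\kappa(P_j),S)$; this is where the hypothesis that $S$ is an $R$-torus (not merely an $\XX$-torus) is essential, and it must be invoked to reduce the cohomology of $\bar U$ to that of its field of constants. Once that compatibility is in place, both parts are short diagram chases using the already-established diagram in the proof of Proposition~\ref{value}(\ref{value_c}).
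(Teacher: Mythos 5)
Your argument for part~(\ref{cor_branch}) is essentially the paper's: you use that the structure morphism $\bar U \to \Spec\kappa(U) = \Spec\kappa(P)$ exhibits the inclusion $\Spec\kappa(P) \hookrightarrow \bar U$ as a section, so restriction $H^1(\bar U,S)\to H^1(\kappa(P),S)$ is split surjective, and then invoke Proposition~\ref{value}(\ref{value_c}). That part is fine.

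Part~(\ref{cor_component}), however, has a genuine gap. You assert that after choosing a coordinate, the affine open $U\subseteq\P^1_k$ is contained in $\A^1_k$ because ``one may move a point of $\PP\cap\bar U$ to $\infty$.'' But moving a point to $\infty$ requires that point to be $k$-rational, and the hypotheses allow every $P_j\in\PP\cap\bar U$ to have residue field strictly larger than $k$ (this happens, for instance, in Example~\ref{geom not monotonic ex}, where the intersection point has residue field $k(\sqrt a)$). In that situation $U=\P^1_k\setminus\{P_1,\dots,P_n\}$ contains all $k$-rational points of $\P^1_k$ and is not an open subscheme of $\A^1_k$, so Properties~\ref{properties_flasque}(\ref{aff flasque coho iso}) does not apply to give the isomorphism $H^1(k,S)\xrightarrow{\sim}H^1(U,S)=H^1(k[U],S)$ that your argument rests on. The paper avoids this by working not with $U$ but with a larger affine open $V\subset\bar U$ with $V\cong\A^1_k$ that \emph{contains} all the points $P_j$: such a $V$ is the complement of a single $k$-rational point, which exists (when $k$ is infinite) because $\P^1(k)$ is infinite while there are only finitely many $P_j$. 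Then the restriction $H^1(\bar U,S)\to\prod_j H^1(\kappa(P_j),S)$ factors through $H^1(V,S)=H^1(k,S)$ by Properties~\ref{properties_flasque}(\ref{aff flasque coho iso}), and combined with the surjectivity of $H^1(\bar U,S)\to H^1(V,S)$ from Properties~\ref{properties_flasque}(\ref{flasque coho surj}) this gives the claimed equality of images. You also omit the case $k$ finite, which the paper disposes of separately by noting that each $H^1(\kappa(P_j),S)$ vanishes by Lang's theorem, so both images are trivially zero; that case cannot be folded into the $V\cong\A^1_k$ argument because for finite $k$ the points $P_j$ could exhaust $\P^1(k)$.

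To repair your proof: replace $U$ by a choice of $V\cong\A^1_k$ containing all the $P_j$, keep the factorization through $H^1(V,S)$ and the surjectivity $H^1(\bar U,S)\twoheadrightarrow H^1(V,S)$, and handle $k$ finite separately via Lang.
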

  \begin{proof}
  (\ref{cor_branch}):
  We have an inclusion morphism $i:P \to \bar U$, and by the hypothesis  we also have a structure morphism $j:\bar U \to \Spec(\kappa(P)) = P$.
The composition $ji$ is the identity.  Taking cohomology, we obtain
$i^*:H^1(\bar U,S) \to H^1(P,S)$ and
$j^*:H^1(P,S) \to H^1(\bar U,j^*(S)) = H^1(\bar U,S)$.  Here the last
equality follows from the fact that $S$ is defined over $\bar U$ via pullback
from $R$, together with the fact that the map $\bar U \to \Spec(k) \subset
\Spec(R)$ factors through $P$ by hypothesis.
Since $ji$ is the identity, the composition
$$H^1(\kappa(P),S) \to H^1(\bar U,S) \to H^1(\kappa(P),S),$$
is also the identity map, yielding the first assertion.
The second assertion follows from the
first one by the last assertion  of Proposition~\ref{value}~(c).

  (\ref{cor_component}):  By Proposition~\ref{value}(\ref{value_c}), the image $H^1(F_{U},S)  \to \prod_{j=1}^n H^1(\kappa(P_{j}),S)$
  coincides with the image of $H^1(\bar U,S) \to \prod_{j=1}^n H^1(\kappa(P_{j}),S)$.
  If $k$ is finite, the statement is clear since all the $H^1(\kappa(P_{j}),S)$ vanish (use the Lang isogeny, \cite{lang}, Theorem~2).
  If $k$ is not finite, let $V \subset \bar U$ be an open subset
   isomorphic to $\A^1_{k}$ containing all points $P_{j}$.
  Then the map $H^1(\bar U,S) \to \prod_{j=1}^n H^1(\kappa(P_{j}),S)$ factors through $H^1(V,S)=H^1({\mathbb A}^1_k,S)=H^1(k,S)$ by Properties~\ref{properties_flasque}(\ref{aff flasque coho iso}),  and the assertion follows.
 \end{proof}

%==================================================
\section{Decorated Graphs and Contraction Results}\label{graphs}
%==================================================

For a semi-global field $F$ and an $F$-torus $T$, we will obtain results about $\Sha(F,T)$ by considering a graph associated to the closed fiber of a normal crossings model of $F$.  In this section, we will establish the graph theoretic prerequisites for this study.

%--------------------------------------------------------------------------------------------------------
\subsection{Decorated Graphs}
%--------------------------------------------------------------------------------------------------------

When we use the term {\em graph}, we will allow multiple edges between vertices as well as loops. It will be convenient notationally to consider every edge as having two distinct ``ends,'' or ``halves,'' each of which will be attached to a vertex (possibly the same one).  All graphs will be assumed finite.

Given a graph $\Gamma$ with vertex set $V$ and edge set $E$, and an edge $e$  connecting distinct vertices $x$ and $y$, we can form a new graph $\Gamma/e$, called the {\em contraction of $\Gamma$ along $e$}, by removing the edge $e$ and identifying the vertices $x$ and $y$ (denoting the resulting vertex by $[xy]$).
More formally, $\Gamma/e$ is the graph with vertex set $V' = (V \setminus \{x, y\}) \cup \big\{[xy]\big\}$ and edge set $E' = E \setminus \{e\}$,
where an edge $e' \in E'$ is adjacent to $v' \in V' \setminus \big\{[xy]\big\}$ if $e'$ is adjacent to $v'$ in $\Gamma$, and where $e'$ is adjacent to $[xy]$ in $\Gamma'$ if $e'$ is adjacent to either $x$ or $y$ in $\Gamma$.

Let $\Gamma$ be a graph. A {\em coefficient system} $A$ on $\Gamma$ is a rule that associates abelian groups $A_e$ and $A_x$ to each edge $e$ and vertex $x$, and associates a homomorphism $A_\alpha: A_x \to A_e$ to each half edge $\alpha$ of $e$ that is attached to $x$.  We refer to a pair $(\Gamma, A)$ consisting of a graph $\Gamma$ and a coefficient system $A$ on $\Gamma$ as a {\em  decorated graph}.

If $A, B$ are coefficient systems on the same graph $\Gamma$, we define a {\em morphism} $A \to B$ to be a collection of group homomorphisms $A_x \to B_x$ and $A_e \to B_e$ which commute with the maps $A_\alpha$. Further, we can form the kernel and cokernel of a morphism componentwise, and obtain an abelian category -- these are, in fact, just the abelian categories of morphisms from a diagram category coming from the graph $\Gamma$ to the category of abelian groups.

%--------------------------------------------------------------------------------------------------------
\subsection{Cohomology of Decorated Graphs}
%--------------------------------------------------------------------------------------------------------

Given a coefficient system $A$ on a graph $\Gamma$, we write $\mathcal C(\Gamma, A)$ for the cochain complex
\[\xymatrix @C=2cm @R=.2cm{
\mathcal C^0(\Gamma, A) \ar@{=}[d] \ar[r]^{\sum_\alpha A_\alpha} & \mathcal C^1(\Gamma, A) \ar@{=}[d]  \\
\oplus_{x} A_x & \oplus_e A_e  }\]
concentrated in degrees $0$ and $1$. Note that this gives a faithful (though not necessarily full) exact functor from the abelian category of coefficient systems on $\Gamma$ to the abelian category of abelian complexes. We define $H^i(\Gamma, A) = H^i(\mathcal C(\Gamma, A))$.

\begin{lemma}\label{ses}
Suppose that we have a graph $\Gamma$ and a short exact sequence of coefficient systems $0 \to A \to B \to C \to 0$ on $\Gamma$. Then we obtain a $6$-term exact sequence of cohomology:
\[ 0 \to H^0(\Gamma, A) \to H^0(\Gamma, B) \to H^0(\Gamma, C) \to H^1(\Gamma, A) \to H^1(\Gamma, B) \to H^1(\Gamma, C) \to 0 \]
\end{lemma}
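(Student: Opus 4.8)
The statement is the standard long exact cohomology sequence for a two-term complex, so the cleanest route is to invoke general homological algebra rather than chase the graph explicitly. The key observation is that the functor $A \mapsto \mathcal C(\Gamma, A)$ is an exact functor from the abelian category of coefficient systems on $\Gamma$ to the category of cochain complexes of abelian groups (this is noted just before the lemma: the functor is faithful and exact). Hence a short exact sequence $0 \to A \to B \to C \to 0$ of coefficient systems is carried to a short exact sequence of complexes
\[ 0 \to \mathcal C(\Gamma, A) \to \mathcal C(\Gamma, B) \to \mathcal C(\Gamma, C) \to 0, \]
meaning that in each degree $i \in \{0,1\}$ the sequence $0 \to \mathcal C^i(\Gamma,A) \to \mathcal C^i(\Gamma,B) \to \mathcal C^i(\Gamma,C) \to 0$ is exact; this is immediate because $\mathcal C^i$ is just a direct sum of the groups $A_x$ (for $i=0$) or $A_e$ (for $i=1$), and direct sums of short exact sequences of abelian groups are short exact.

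Given this, the snake lemma (or the general long exact sequence associated to a short exact sequence of complexes) yields an exact sequence
\[ 0 \to H^0(\mathcal C(\Gamma,A)) \to H^0(\mathcal C(\Gamma,B)) \to H^0(\mathcal C(\Gamma,C)) \xrightarrow{\ \delta\ } H^1(\mathcal C(\Gamma,A)) \to H^1(\mathcal C(\Gamma,B)) \to H^1(\mathcal C(\Gamma,C)), \]
and since the complexes are concentrated in degrees $0$ and $1$, the term to the right of $H^1(\mathcal C(\Gamma,C))$ would be $H^2 = 0$, so the last map is surjective; likewise $H^{-1}=0$ gives injectivity on the left. Unwinding the definition $H^i(\Gamma, A) = H^i(\mathcal C(\Gamma, A))$ gives exactly the asserted $6$-term sequence. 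I would state the connecting map $\delta$ explicitly for later use: given a class in $H^0(\Gamma, C) = \ker(\oplus_x C_x \to \oplus_e C_e)$, lift it componentwise to $(b_x) \in \oplus_x B_x$, apply $\sum_\alpha B_\alpha$ to land in $\oplus_e B_e$, and observe that the result maps to $0$ in $\oplus_e C_e$, hence lifts uniquely to an element of $\oplus_e A_e$ which is a cocycle (the $\mathcal C^1$ differential is zero since the complex stops in degree $1$), defining the class $\delta(\text{-})$ in $H^1(\Gamma, A)$.

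**Main obstacle.** There is essentially no obstacle here: the only thing to verify is the exactness of $\mathcal C^i$ applied to the short exact sequence, and that reduces to the triviality that an arbitrary direct sum of short exact sequences of abelian groups is short exact, together with the fact — already granted in the text — that $\mathcal C(\Gamma, -)$ is exact. The rest is the standard machinery. If one prefers to avoid citing the long exact sequence of a short exact sequence of complexes as a black box, one can instead apply the snake lemma directly to the commutative diagram with rows $\mathcal C^0(\Gamma, A) \to \mathcal C^0(\Gamma,B) \to \mathcal C^0(\Gamma,C) \to 0$ and $0 \to \mathcal C^1(\Gamma,A) \to \mathcal C^1(\Gamma,B) \to \mathcal C^1(\Gamma,C)$, whose vertical maps are the differentials $\sum_\alpha A_\alpha$, etc.; the kernels of the verticals are the $H^0(\Gamma, -)$ and the cokernels are the $H^1(\Gamma, -)$, so the snake lemma produces precisely the six-term sequence, with exactness at the two ends coming from injectivity of $A \hookrightarrow B$ in degree $1$ and surjectivity of $B \twoheadrightarrow C$ in degree $0$.
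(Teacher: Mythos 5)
Your argument is correct and is essentially the paper's own proof: both apply the exactness of $A \mapsto \mathcal C(\Gamma,A)$ to get a short exact sequence of complexes, invoke the resulting long exact sequence, and observe that the $H^{-1}$ and $H^2$ terms vanish because the complexes live in degrees $0$ and $1$. The explicit description of the connecting map and the snake-lemma reformulation are useful but not substantively different from what the paper does.
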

\begin{proof}
By the exactness of the above functor,  $0 \to A \to B \to C \to 0$ induces an exact sequence of complexes $0 \to \mathcal C(\Gamma, A) \to \mathcal C(\Gamma, B) \to \mathcal C(\Gamma, C) \to 0$ and thus a long exact sequence of cohomology groups (where the higher cohomology groups vanish since each complex is concentrated in degrees~$0$ and~$1$).
\end{proof}

%----------------------------------------------------------------------------------------------------------------------------------------------------------------------------------------------------------------------------------------------------
\subsection{Contraction of coefficients}\label{contraction-subsection}
%----------------------------------------------------------------------------------------------------------------------------------------------------------------------------------------------------------------------------------------------------

Suppose we are given a decorated graph $(\Gamma, A)$, and $e$ is an edge with half edges $\alpha$ and $\beta$ attached to vertices $x$ and $y$ respectively. We say that the half edge $\alpha$ is {\em redundant} if $x \neq y$ and $A_\alpha: A_x \to A_e$ is an isomorphism. In this case, we may define a new coefficient system $A/\alpha$ on the graph $\Gamma/e$ as follows. As before, write $V'$ and $E'$ for the vertices and edges of $\Gamma/e$. We set
\[ (A/\alpha)_v = A_v, \text{ for } v \in V' \setminus\big\{[xy]\big\}, \ \ \ (A/\alpha)_{[xy]} = A_y, \ \ \ \ (A/\alpha)_f = A_f, \text{ for } f \in E'. \]
For a half edge $\gamma$ of $\Gamma$, we define the homomorphisms $(A/\alpha)_\gamma$ to coincide with $A_\gamma$ in case $\gamma$ is not attached to $x$ or $y$. If $\gamma$ was attached to $y$ in $\Gamma$, and lies on the edge $f$, we let $(A/\alpha)_\gamma$ be the map $A_\gamma : A_y \to A_f$ pre-composed with the identification of $A_y$ with $(A/\alpha)_{[xy]}$.
Finally, if $\gamma$ was attached to $x$ in $\Gamma$, we let $(A/\alpha)_\gamma$ be the composition:
\[\xymatrix{
(A/\alpha)_{[xy]} = A_y \ar[r]^-{A_\beta} & A_e \ar[r]^-{-A_\alpha^{-1}} & A_x \ar[r]^-{A_\gamma} & A_f = (A/\alpha)_f
}\]
The pair $(\Gamma/e, A/\alpha)$ is called the {\em contraction of the decorated graph} $(\Gamma, A)$ along $\alpha$. We say that $(\Gamma,A)$ {\em can be contracted to a point} if it can be reduced to a single vertex with no edges by performing a series of contractions of decorated graphs.

\begin{lemma}\label{contraction}
Let $(\Gamma, A)$ and the contraction $(\Gamma/e, A/\alpha)$ be as above. Then we have a natural morphism of complexes $\pi: \mathcal C(\Gamma/e, A/\alpha) \to \mathcal C(\Gamma, A)$ inducing an isomorphism on cohomology groups.
\end{lemma}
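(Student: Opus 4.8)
The statement is really a bookkeeping assertion: contracting a redundant half-edge $\alpha$ does not change the cohomology of the decorated graph, and moreover this is realized by an explicit map of complexes. So I would proceed by writing down $\pi$ componentwise and then checking (a) that it commutes with the differentials, and (b) that it is a quasi-isomorphism. Since both complexes are concentrated in degrees $0$ and $1$, being a quasi-isomorphism just means: $\pi$ induces an isomorphism on $H^0$ (kernels of the differentials) and on $H^1$ (cokernels). The cleanest route to (b) is to exhibit $\pi$ as a \emph{split injection} of complexes whose cokernel is an acyclic two-term complex --- indeed the cokernel will literally be the complex $(A_x \xrightarrow{A_\alpha} A_e)$ with $A_\alpha$ an isomorphism, placed in degrees $0$ and $1$, which is clearly acyclic. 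Then the long exact sequence (or Lemma~\ref{ses}) forces $\pi$ to be a quasi-isomorphism.

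\textbf{Construction of $\pi$.} Recall $\mathcal C^0(\Gamma,A) = \bigoplus_{v} A_v$ and $\mathcal C^1(\Gamma,A) = \bigoplus_{f} A_f$, and similarly for $\Gamma/e$, whose vertex set is $V' = (V\setminus\{x,y\})\cup\{[xy]\}$ and whose edge set is $E' = E\setminus\{e\}$, with $(A/\alpha)_{[xy]} = A_y$, $(A/\alpha)_v = A_v$ for $v\neq [xy]$, and $(A/\alpha)_f = A_f$. On $\mathcal C^1$, I would take $\pi^1\colon \bigoplus_{f\in E'} A_f \to \bigoplus_{f\in E} A_f$ to be the obvious inclusion, sending the $A_f$-summand identically to the $A_f$-summand and hitting nothing in the $A_e$-summand. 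On $\mathcal C^0$, I would take $\pi^0\colon \bigoplus_{v\in V'}(A/\alpha)_v \to \bigoplus_{v\in V} A_v$ to be the identity on each $A_v$ with $v\neq[xy]$, and on the $[xy]$-summand $(A/\alpha)_{[xy]} = A_y$ to be the map
\[
A_y \xrightarrow{\;(\,\mathrm{id}\,,\ -A_\alpha^{-1}A_\beta\,)\;} A_y \oplus A_x,
\]
landing in the $A_y$ and $A_x$ summands of $\mathcal C^0(\Gamma,A)$. (This is precisely the map forced by the definition of $A/\alpha$, which moved the half-edges at $x$ across $e$ via $-A_\alpha^{-1}A_\beta$.) Both $\pi^0$ and $\pi^1$ are evidently split injections of abelian groups.

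\textbf{Commutativity with differentials and the cokernel.} I would verify $d_\Gamma\circ\pi^0 = \pi^1\circ d_{\Gamma/e}$ summand-by-summand. For a half-edge $\gamma$ not attached to $x$ or $y$ this is immediate since $(A/\alpha)_\gamma = A_\gamma$. For a half-edge attached to $y$ (in $\Gamma$) it follows from the identification $(A/\alpha)_{[xy]} = A_y$ used to define $(A/\alpha)_\gamma$. The one case needing care is a half-edge $\gamma$ at $x$ on some $f\neq e$: in $\Gamma/e$ the map is $(A/\alpha)_\gamma = A_\gamma\circ(-A_\alpha^{-1})\circ A_\beta\colon A_y\to A_f$, while in $\Gamma$ the composite $d_\Gamma\circ\pi^0$ picks up $A_\gamma$ applied to the $A_x$-component $-A_\alpha^{-1}A_\beta$ of $\pi^0$; these agree, and the half-edge $\beta$ on $e$ contributes $A_\beta\colon A_y\to A_e$ in $d_\Gamma\circ\pi^0$ but $e\notin E'$ so there is no corresponding term on the left --- this is exactly the content that $\mathrm{coker}(\pi^1)$ is the $A_e$-summand. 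Putting this together, $\mathrm{coker}(\pi)$ is the complex $A_x\xrightarrow{A_\alpha} A_e$ in degrees $0$ and $1$ (the $A_x$ appears in degree $0$ because $\pi^0$ misses exactly the anti-diagonal $A_x$-direction): here the surjection $\mathcal C^0(\Gamma,A)\to A_x$ is the projection onto the $x$-summand followed by nothing extra, and one checks the induced differential is $A_\alpha$. Since $\alpha$ is redundant, $A_\alpha$ is an isomorphism, so this cokernel complex is acyclic; by Lemma~\ref{ses} applied to $0\to\mathcal C(\Gamma/e,A/\alpha)\xrightarrow{\pi}\mathcal C(\Gamma,A)\to\mathrm{coker}(\pi)\to 0$, $\pi$ induces isomorphisms on $H^0$ and $H^1$.

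\textbf{Expected obstacle.} Nothing here is deep; the only real friction is organizing the indexing of half-edges at $x$ versus $y$ versus elsewhere so that the commuting square $d_\Gamma\pi^0 = \pi^1 d_{\Gamma/e}$ is checked without sign errors, and identifying the cokernel complex correctly --- in particular pinning down that the degree-$0$ part of the cokernel is the ``anti-diagonal'' copy of $A_x$ (equivalently, that $\pi^0$ is injective with cokernel $A_x$) and that the differential it inherits is honestly $A_\alpha$ rather than $-A_\alpha$ or the like. Once the splitting and the cokernel are correctly identified, the quasi-isomorphism claim is formal.
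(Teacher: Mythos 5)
Your proof is correct and follows essentially the same route as the paper: you define $\pi$ by the identical formula $a_y \mapsto (a_y, -A_\alpha^{-1}A_\beta(a_y))$ on the $[xy]$-summand, identify the cokernel (in each degree) as $A_x$ in degree $0$ and $A_e$ in degree $1$ with connecting map $A_\alpha$, and conclude via the long exact sequence / snake lemma using that $A_\alpha$ is an isomorphism. The only cosmetic difference is that you appeal to Lemma~\ref{ses}, which was stated for coefficient systems rather than arbitrary two-term complexes (the cokernel $A_x \xrightarrow{A_\alpha} A_e$ is not literally $\mathcal C(\Gamma',B)$ for a coefficient system $B$), whereas the paper applies the snake lemma directly to the resulting $2\times 3$ diagram; the underlying argument is identical.
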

\begin{proof}
We define the morphism $\pi$ as follows. Let $\alpha, \beta$ be the half edges at $e$, attached to vertices $x, y$ of $\Gamma$. For a vertex $v \not\in \{x, y\}$ of $\Gamma$ and an edge $f \ne e$ of $\Gamma$, we define $\pi$ to map the groups $(A/\alpha)_v, (A/\alpha)_f$ to $A_v, A_f$ via the canonical identifications,
and to take $(A/\alpha)_{[xy]} = A_y$ to $A_x \oplus A_y \subset \mathcal C^0(\Gamma, A)$, by $a_y \mapsto (- A_\alpha^{-1} A_\beta(a_y), a_y)$. It is straightforward to check that this commutes with the differential -- i.e. is a morphism of complexes. One can also check that the cokernel of the map $C^0(\Gamma/e, A/\alpha) \to C^0(\Gamma, A)$ is exactly $A_x$, where the isomorphism is induced by the map $C^0(\Gamma, A)\rightarrow A_x$ given by $(a_v)_{v\in V} \mapsto A_\alpha^{-1} A_\beta(a_y) + x$.

The cokernel of the map $C^1(\Gamma/e, A/\alpha) \to C^1(\Gamma, A)$ is just $A_e$ via the projection map. We then obtain a map of short exact sequences
\[\xymatrix{
0 \ar[r] & \mathcal C^0(\Gamma/e, A/\alpha) \ar[r] \ar[d] & \mathcal C^0(\Gamma, A) \ar[r] \ar[d] & A_x \ar[d] \ar[r] & 0 \\
0 \ar[r] & \mathcal C^1(\Gamma/e, A/\alpha) \ar[r] & \mathcal C^1(\Gamma, A) \ar[r] & A_e \ar[r] & 0
}\]
Since the vertical map on the right is an isomorphism (it is the isomorphism $A_\alpha$), the snake lemma gives an isomorphism of the kernels and cokernels of the other two vertical maps. But these are the groups $H^0$ and $H^1$ of the complexes respectively.
\end{proof}

The following example illustrates the effect of contracting edges, and relates the cohomology of decorated graphs to that of the underlying graph as a topological space.

\begin{example}\label{graph coho ex}
Let $\Gamma$ be a graph, let $A_0$ be an abelian group, and let $A$ be a coefficient system on $\Gamma$ in which each $A_e$ and each $A_x$ is isomorphic to $A_0$ and in which every $A_\alpha$ is an isomorphism.  We may  wish to compare $H^1(\Gamma,A)$ (which depends on the choice of the isomorphisms $A_\alpha$) with
the group $H^1(\Gamma_{\operatorname{top}},A_0)$, where the latter group is the usual cohomology of the topological space $\Gamma_{\operatorname{top}}$ associated to the graph $\Gamma$, viewed as a finite simplicial complex, with coefficients in the group $A_0$.
Depending on the choice of the isomorphisms $A_\alpha$, these cohomology groups sometimes agree, and sometimes not, as illustrated below.\renewcommand{\theenumi}{\alph{enumi}}
\renewcommand{\labelenumi}{(\alph{enumi})}
\begin{enumerate}

\item  \label{simplicial ex}
We will first observe that the topological cohomology agrees with the cohomology of decorated graphs with respect to a particular choice of isomorphisms. To see this, for each edge $e$ of $\Gamma$ choose an orientation. Now if $e$ is an edge from vertex $x$ to vertex $y$, and if $\alpha$ is the half edge on $e$ attached to $y$, define $A_\alpha:A_y \to A_e$ to be the identity map on the group $A_0$; define $A_\beta:A_x \to A_e$ to be $-\mathrm{id}_{A_0}$ for the other half edge $\beta$ (attached to $x$).  Let $A_\Gamma$ be the coefficient system defined by these choices.  Then $H^1(\Gamma,A_\Gamma)=H^1(\Gamma_{\operatorname{top}},A_0)$, by the definition of simplicial cohomology.
\item \label{bipartite ex}
Consider a bipartite graph; i.e., a graph $\Gamma$ with vertex set
partitioned as $V = V' \sqcup V''$, such that each edge has a vertex in
$V'$ and a vertex in $V''$.  In this case, the cohomology of a constant
coefficient system with identity isomorphisms $A_\alpha$ agrees with
that of the topological space.  More precisely, take the coefficient
system $A$, in which each edge and vertex is decorated by the group $A_0$, and in which each homomorphism associated to a half edge is the identity. We may also consider the orientation in which each edge is chosen to start at a vertex in $V'$ and end in a vertex in $V''$. Let $A'$ be the coefficient system associated with this set of choices as in part~(\ref{simplicial ex}). Define a morphism of coefficient systems $\phi: A \to A'$ by choosing $\phi_x: A_x \to A'_x$ to be the identity if $x \in V''$ and the negative of the identity if $x \in V'$. Since $\phi$ is an isomorphism, part (\ref{simplicial ex}) yields $H^1(\Gamma, A) \simeq H^1(\Gamma, A') \simeq H^1(\Gamma_{\operatorname{top}},A_0)$.
\item \label{coho different ex}
As an example where $H^1(\Gamma,A_\Gamma)$ and $H^1(\Gamma_{\operatorname{top}},A_0)$ do not agree, let $\Gamma$ be a triangle; i.e., a graph consisting of three vertices $v_1,v_2,v_3$ and three edges $e_1,e_2,e_3$ with $v_i,v_j$ being the endpoints of $e_k$ for each permutation $(i,j,k)$ of $(1,2,3)$.  Let $A_0 = \Q$ and let each $A_\alpha$ be the identity.  Then $H^1(\Gamma_{\operatorname{top}},A_0) = \Q$, but
$H^1(\Gamma,A_\Gamma)=0$ by direct computation.  Note that $\Gamma$ is not bipartite, so this does not contradict part~(\ref{bipartite ex}).
\end{enumerate}
\end{example}

We give a sufficient condition for when a decorated graph can be contracted to a point:

\begin{prop}\label{contractible}
Let $(\Gamma, A)$ be a decorated graph. Assume that $\Gamma$  is a tree for which there is a choice of some vertex $v_0$ of $\Gamma$ (the ``root'') with the following property:
\begin{itemize}
\item[]
Suppose that $v,w$ are vertices of $\Gamma$ and $v$ is the parent of $w$ via an edge $e$ (i.e., $v,w$ are adjacent via $e$, and $v$ lies on the unique path connecting $v_0$ to $w$). Let $\alpha$ be the half edge associated to $w$ and $e$ (there is only one since a tree has no loops). Then $A_\alpha:A_w\rightarrow A_e$ is an isomorphism (equivalently, the half edge $\alpha$ is redundant).
\end{itemize}
Then $(\Gamma,A)$ can be contracted to a point.

\end{prop}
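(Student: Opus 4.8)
The plan is to argue by induction on the number of edges of $\Gamma$, using Lemma~\ref{contraction} to reduce the number of edges while preserving the hypothesis. If $\Gamma$ has no edges, then since it is a tree it is a single vertex, and there is nothing to do. So suppose $\Gamma$ has at least one edge. First I would locate a leaf: since $\Gamma$ is a finite tree with at least one edge, it has a vertex $w \ne v_0$ of degree one. Let $e$ be the unique edge at $w$, let $v$ be the other endpoint of $e$, and let $\alpha$ be the half edge of $e$ attached to $w$. Because $v_0 \ne w$, the vertex $v$ lies on the path from $v_0$ to $w$ (indeed $v$ is the parent of $w$), so the hypothesis applies and $A_\alpha : A_w \to A_e$ is an isomorphism; that is, $\alpha$ is redundant. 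Hence we may form the contraction $(\Gamma/e, A/\alpha)$.

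Next I would check that $(\Gamma/e, A/\alpha)$ again satisfies the hypothesis of the proposition, so that the inductive step goes through. The graph $\Gamma/e$ is again a tree (contracting an edge of a tree yields a tree), with one fewer edge. Take as root the image $v_0'$ of $v_0$ in $\Gamma/e$; since $w$ is a leaf distinct from $v_0$, the root $v_0$ is unaffected, so $v_0' = v_0$. Now let $p, q$ be vertices of $\Gamma/e$ with $p$ the parent of $q$ via an edge $f$, and let $\gamma$ be the half edge of $f$ attached to $q$; I must show $(A/\alpha)_\gamma$ is an isomorphism. The edge $f$ corresponds to an edge of $\Gamma$ (namely $f$ itself, since $E' = E \setminus \{e\}$), and the parent-child relation in $\Gamma/e$ at $f$ matches that in $\Gamma$ except possibly when the merged vertex $[xy] = v$ (here $x = w$, $y = v$ in the notation of the contraction subsection, so $[xy]$ is identified with $A_v$) is involved. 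In every case $(A/\alpha)_\gamma$ is, by the definition of the contracted coefficient system, either equal to $A_\gamma$ itself, or $A_\gamma$ pre-composed with the identification $A_v = (A/\alpha)_{[xy]}$, or (when $\gamma$ was attached to $w$ in $\Gamma$) the composite $A_v \xrightarrow{A_{\beta}} A_e \xrightarrow{-A_\alpha^{-1}} A_w \xrightarrow{A_\gamma} A_f$. In the first two cases $(A/\alpha)_\gamma$ is an isomorphism precisely when the corresponding $A_\gamma$ in $\Gamma$ is, which holds by the hypothesis applied to $\Gamma$ (the parent-child relation is preserved: the path from $v_0$ to $q$ in $\Gamma/e$ lifts to the path from $v_0$ to $q$ in $\Gamma$, possibly passing through $e$). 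In the third case $A_\beta$ and $-A_\alpha^{-1}$ are isomorphisms ($A_\alpha$ is by redundancy of $\alpha$; $A_\beta$ need not be in general, but here $\beta$ is the half edge at $w$'s neighbour $v$, and since $w$ is a leaf there is no child of $w$ other than via $e$, so in fact this case does not arise). I would spell this out carefully: since $w$ has degree one, the only half edge attached to $w$ is $\alpha$, so no half edge $\gamma \ne \alpha$ was attached to $w$, and the third bullet in the definition of $A/\alpha$ is vacuous here. Thus $(A/\alpha)_\gamma = A_\gamma$ or its harmless re-identification, and the hypothesis is inherited.

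By the inductive hypothesis, $(\Gamma/e, A/\alpha)$ can be contracted to a point; prepending the single contraction $(\Gamma, A) \to (\Gamma/e, A/\alpha)$, so can $(\Gamma, A)$. This completes the induction.

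The main obstacle I anticipate is the bookkeeping in the inductive step: verifying that the chosen leaf $w$ is genuinely a child (not the root) so that the hypothesis delivers redundancy of $\alpha$, and then confirming that the contracted decorated graph still satisfies the rooted-redundancy condition — in particular that the parent-child structure and the isomorphism property of the relevant half-edge maps transport correctly through the identifications in the definition of $A/\alpha$. The key simplification that makes this clean is choosing $w$ to be a leaf rather than an arbitrary child: then no half edge other than $\alpha$ touches $w$, so the potentially awkward third case in the definition of the contracted coefficient system never occurs, and the new coefficient-system maps are literally the old ones (up to canonical identifications), which makes inheritance of the hypothesis transparent. One should also note at the outset that a finite tree with at least one edge always has at least two leaves, hence at least one leaf $\ne v_0$, so such a $w$ always exists.
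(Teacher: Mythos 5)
Your proof is correct and follows essentially the same strategy as the paper's: induct on the size of the tree, contract a leaf edge, and observe that the hypothesis is inherited by the contracted decorated graph. The paper inducts on the number of vertices rather than edges (equivalent for a tree) and is terser about why the hypothesis persists; your added care in noting that a leaf $w\neq v_0$ exists, and that no half edge other than $\alpha$ is attached to $w$ (so the ``third case'' of the contraction formula is vacuous), correctly fills in the bookkeeping the paper leaves implicit.
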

\begin{proof}
We show by induction on the number of vertices that $(\Gamma,A)$ can be contracted to a point. Let $w$ be a leaf of $(\Gamma, A)$ (i.e., a vertex of degree~$1$), and let $v$ be its parent in $\Gamma$ via an edge $e$. Let $\alpha$ be the half edge associated to $w$. By assumption, $\alpha $ is redundant. The contraction $(\Gamma/e, A/\alpha)$ is a decorated graph with fewer vertices; these vertices are the vertices of $\Gamma$ except with $w$ removed; the decorations of those remaining vertices are unchanged. Hence $(\Gamma/e, A/\alpha)$ also satisfies the hypothesis of the proposition, and so can be contracted to a point, by induction. Hence $(\Gamma, A)$ can be contracted to a point.\end{proof}

%================================================================
\section{Decorations of the Reduction Graph and Computations of $\Sha(F,T)$}\label{computations}
%================================================================

As before, consider a complete discrete valuation ring $R$ with fraction field $K$ and residue field $k$, and a semi-global field $F$ over $K$.  Let $\XX$ be a normal crossings model of~$F$ and let $\PP$ be a finite non-empty set of closed points of $\XX$ as in Section~\ref{setup}. Associated to $X^{\rm red}$ and $\PP$ is the so-called {\em reduction graph} $\Gamma$, as in \cite[Section~6]{HHK3} (see below).  In this section, we apply the above graph theoretic setup from the previous section to this graph.

%-----------------------------------------------------------------------------------------------
\subsection{Comparison of coefficient systems}\label{coeffs}
%-----------------------------------------------------------------------------------------------

Recall from Section~\ref{setup} that $\XX$ is a connected regular integral proper curve over $R$ with function field $F$ such that the reduced closed fiber $X^{\rm red}$ is a union of connected regular $k$-curves $C_i$ that intersect each other transversally, and that $\PP$ is a non-empty set of closed points of $\XX$ that contains the
(possibly empty) finite set ${\PP}_0$ of closed points at which two
components $C_i,C_j$ meet, and let $\UU$ be the set of connected
components of the complement of $\PP$ in the reduced closed fiber $X^{\operatorname{red}}$. Associated
to this is the reduction graph $\Gamma=\Gamma(\XX,\PP)$. This is a bipartite graph whose vertex set is $\PP\cup \UU$, and such that for each branch $(U,P)$ there is an edge connecting $U \in \UU$ to $P \in \PP$.

\begin{remark} \label{dual graph rk}
Recall that for a normal crossings model $\XX$, there is also another graph that can be associated to the reduced closed fiber, viz.\ the {\em dual graph} $\Gamma_{\operatorname{D}}$ (as in \cite[p.~86]{DM}).  Its vertices correspond to the irreducible components of the closed fiber, and its edges correspond to intersection points of those irreducible components.  If the closed fiber does not consist of just a single (regular) irreducible component, then the set of intersection points $\PP_0$ is non-empty; and the reduction graph $\Gamma$ associated to the reduced closed fiber and $\PP_0$ is the barycentric subdivision of $\Gamma_{\operatorname{D}}$ (see \cite[Remark~6.1(a)]{HHK3}). Even if $\PP$ is strictly larger than $\PP_0$, the graphs $\Gamma_{\operatorname{D}}$ and $\Gamma$ are homotopy equivalent as topological spaces.
\end{remark}

Associated to each $P$, $U$, and $(U,P)$ we have fields $F_P$,
$F_U$, and $F_{U,P}$ (see Section~\ref{setup}). For each $P$, we
also have the residue fields $\kappa(P)$ of $\XX$ at $P$. Recall that for  $U\in
\UU$, we defined $\kappa(U)=\OO (\bar U)$, the ring of functions on the
closure of $U$. Let $S$ be a flasque torus over $R$. If $P$ is in the closure of $U$, there is a specialization map $H^1(k(U),S)\rightarrow H^1(\kappa(P),S)$ (defined after Properties~\ref{properties_flasque}).
We define the following coefficient systems:

\newcommand{\fcoef}{\mc H_F(S)}
\newcommand{\kcoef}{\mc H_k(S)}
\newcommand{\kapcoef}{\mc H_{\kappa}(S)}
\begin{enumerate}
\item $\fcoef$: Each vertex $\xi$ in $\PP\cup \UU$ is decorated with
$H^1(F_\xi, S)$, and each edge $(U,P)$ is decorated with $H^1(F_{U,P}, S)$. The homomorphisms for the half edges are induced by the inclusions $F_P, F_U \hookrightarrow F_{U,P}$.
\item $\kcoef$:
Each vertex $P$ in $\PP$ is decorated with $H^1(\kappa(P), S)$, each
vertex $U$ in $\UU$ is decorated with $H^1(k(U), S)$, and each edge $(U,P)$ is decorated with
$H^1(\kappa(P),S)$. The homomorphisms for the half edges are the identity (for $P$) and the specialization map (for $U$), respectively.
\item $\kapcoef$:
Each vertex $\xi$ in $\PP \cup \UU$ is decorated with $H^1(\kappa(\xi), S)$,
and each edge $(U,P)$ is decorated with
$H^1(\kappa(P),S)$.  The homomorphisms for the half edges are induced by the identity (for $P$) and the natural inclusion map $\kappa(U)\rightarrow \kappa(P)$.
\end{enumerate}

\begin{lemma}\label{ShaFromGraph} Let $T$ be an $R$-torus
and let $1\rightarrow S\rightarrow Q\rightarrow T\rightarrow 1$ be a flasque resolution of~$T$.  Then $\Sha(F,T)\simeq H^1(\Gamma, \fcoef) \simeq H^1(\Gamma, \kcoef)$.
\end{lemma}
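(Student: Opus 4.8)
The plan is to prove the two isomorphisms $\Sha(F,T)\simeq H^1(\Gamma,\fcoef)$ and $H^1(\Gamma,\fcoef)\simeq H^1(\Gamma,\kcoef)$ separately, and to do so by recognizing the cochain complex $\mathcal C(\Gamma,\fcoef)$ as exactly the complex computing the double coset description of $\Sha$. First I would record that since $\Gamma$ is bipartite with vertex set $\PP\cup\UU$ and one edge per branch $(U,P)$, by definition we have
\[
\mathcal C^0(\Gamma,\fcoef)=\bigoplus_U H^1(F_U,S)\oplus\bigoplus_P H^1(F_P,S),\qquad
\mathcal C^1(\Gamma,\fcoef)=\bigoplus_{(U,P)}H^1(F_{U,P},S),
\]
and the differential sends a family $((\xi_U)_U,(\xi_P)_P)$ to the family whose $(U,P)$ component is (the image of $\xi_U$ in $H^1(F_{U,P},S)$) minus (the image of $\xi_P$). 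Hence $H^1(\Gamma,\fcoef)$ is the cokernel of this differential, which is precisely the double quotient
\[
\Bigl.\textstyle\prod_U H^1(F_U,S)\,\Bigm\backslash\,\prod_{(U,P)}H^1(F_{U,P},S)\,\Bigm/\,\prod_P H^1(F_P,S)\Bigr.,
\]
the groups here being abelian so the two-sided quotient is just the cokernel. By Theorem~\ref{H1Scosets}(\ref{F-tori_a}) this cokernel is isomorphic to $\Sha_\PP(F,T)$, and by Theorem~\ref{allshaTequal} (valid since $T$ is an $R$-torus, hence an $\XX$-torus via pullback) we have $\Sha_\PP(F,T)=\Sha(F,T)$. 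This gives the first isomorphism. One should also note that the flasque resolution over $R$ pulls back to a flasque resolution over $F$, so that Theorem~\ref{H1Scosets} applies with this particular $S$.

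For the second isomorphism, the idea is to produce a morphism of coefficient systems $\fcoef\to\kcoef$ (or an isomorphism between them directly) that is an isomorphism on every vertex group and every edge group; then it induces an isomorphism on the complexes $\mathcal C(\Gamma,-)$ and hence on $H^1$. On the $\PP$-vertices, the specialization map $H^1(F_P,S)\to H^1(\kappa(P),S)$ is an isomorphism by Proposition~\ref{value}(\ref{value_a}) (equivalently Properties~\ref{properties_flasque}(\ref{torus coho}), since $\wh R_P$ is complete). On the edges, the specialization map $\theta:H^1(F_{U,P},S)\to H^1(\kappa(P),S)$ is an isomorphism by Lemma~\ref{residue}, and Proposition~\ref{value}(\ref{value_b}) says exactly that these two isomorphisms are compatible with the half-edge map attached to $P$, i.e. the square
\[
\xymatrix{
H^1(F_P,S)\ar[r]^-{\sim}\ar[d] & H^1(\kappa(P),S)\ar[d]^-{=}\\
H^1(F_{U,P},S)\ar[r]^-{\sim} & H^1(\kappa(P),S)
}
\]
commutes. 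On the $\UU$-vertices, however, the restriction $H^1(F_U,S)\to H^1(k(U),S)$ is generally \emph{not} an isomorphism, so a naive vertexwise comparison fails. The way around this is the subtler compatibility in Proposition~\ref{value}(\ref{value_c}): the image of $H^1(F_U,S)$ in $\prod_{(U,P)}H^1(\kappa(P),S)$ equals the image of $H^1(k(U),S)$ under the specialization maps. This means that, although the two coefficient systems differ at the $\UU$-vertices, the subcomplexes of $\mathcal C^1$ hit by $\mathcal C^0(\Gamma,\fcoef)$ and by $\mathcal C^0(\Gamma,\kcoef)$ coincide, after identifying the edge groups via $\theta$. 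Since $H^1$ of either complex is $\mathcal C^1$ modulo the image of $\mathcal C^0$, these images agreeing forces $H^1(\Gamma,\fcoef)\simeq H^1(\Gamma,\kcoef)$.

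Concretely I would argue as follows for the second isomorphism. Using the edge isomorphisms $\theta$ to identify $\mathcal C^1(\Gamma,\fcoef)$ with $\mathcal C^1(\Gamma,\kcoef)=\bigoplus_{(U,P)}H^1(\kappa(P),S)$, and using the $\PP$-vertex isomorphisms together with the commuting square above, the two differentials $d_F:\mathcal C^0(\Gamma,\fcoef)\to\mathcal C^1$ and $d_k:\mathcal C^0(\Gamma,\kcoef)\to\mathcal C^1$ have the property that $\operatorname{im}(d_F)$ and $\operatorname{im}(d_k)$ are both generated (as subgroups of $\bigoplus_{(U,P)}H^1(\kappa(P),S)$) by: (i) for each $P$, the diagonal copy of $H^1(\kappa(P),S)$ placed in the coordinates of all branches at $P$; together with (ii) for each $U$, the image of the $U$-vertex group in $\prod_{P\text{ on }\bar U}H^1(\kappa(P),S)$. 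By Proposition~\ref{value}(\ref{value_c}) the subgroup described in (ii) is the same whether the $U$-vertex group is $H^1(F_U,S)$ or $H^1(k(U),S)$, so $\operatorname{im}(d_F)=\operatorname{im}(d_k)$ inside the common $\mathcal C^1$, and therefore the cokernels — i.e. $H^1(\Gamma,\fcoef)$ and $H^1(\Gamma,\kcoef)$ — coincide.

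The main obstacle is precisely this $\UU$-vertex discrepancy: one must resist the temptation to build a vertexwise isomorphism of coefficient systems (which does not exist) and instead phrase everything at the level of images in $\mathcal C^1$, invoking Proposition~\ref{value}(\ref{value_c}). A secondary point requiring care is bookkeeping of signs/orientations in the differential of a bipartite decorated graph, and checking that the half-edge map on the $U$-side in $\kcoef$ is the specialization map (built into its definition), so that the identification with the $\fcoef$ picture via $\theta$ is the correct one; here Proposition~\ref{value}(\ref{value_c}) again does the work by comparing $H^1(F_U,S)\to H^1(F_{U,P},S)\xrightarrow{\sim}H^1(\kappa(P),S)$ with the specialization map out of $H^1(k(U),S)$.
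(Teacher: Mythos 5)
Your proof is correct and follows essentially the same route as the paper: identify $H^1(\Gamma,\fcoef)$ with the double coset from Theorem~\ref{H1Scosets}, invoke Theorem~\ref{allshaTequal} (i.e.\ \ref{shas}), and then compare $\fcoef$ with $\kcoef$ not vertexwise but at the level of images inside the common $\mathcal C^1$, using Proposition~\ref{value}(\ref{value_a}), (\ref{value_b}), and crucially (\ref{value_c}). Your remark that one should not try to build a vertexwise isomorphism at the $\UU$-vertices is exactly the point the paper's argument navigates around.
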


\begin{proof}
By Theorem~\ref{H1Scosets},
$$  \Sha_\PP(F, T)\simeq \left.  \prod _U H^1(F_U,S) \middle\backslash \prod_{(U, P)} H^1(F_{U, P},S)
 \middle\slash \prod_P H^1(F_P,S)  \right. .$$
The left hand side equals $\Sha(F,T)$ by Theorem~\ref{shas}.
The right hand side equals $ H^1(\Gamma, \fcoef)$ by definition.

For the second identification, we note that by
\ref{value}(\ref{value_a},\ref{value_b}), we may identify $H^1(F_P, S) \simeq H^1(\kappa(P), S)$ and
$H^1(F_{U, P}, S) \simeq H^1(\kappa(P), S)$. Since $H^1(\Gamma,
\fcoef)$ and $H^1(\Gamma, \kcoef)$ are therefore cokernels of homomorphisms to
the isomorphic groups $\prod_{(U, P)} H^1(F_{U, P}, S)$ and $\prod_{(U, P)}
H^1(\kappa(P), S)$, we therefore need only show that the images of
these homomorphisms coincide. But this follows from the fact that by
\ref{value}(\ref{value_b}), we have a
commutative diagram
\[\xymatrix{
H^1(F_P, S) \ar[r]^\sim \ar[d] & H^1(\kappa(P), S) \ar[d]^{=} \\
H^1(F_{U, P}, S) \ar[r]^\sim & H^1(\kappa(P), S),
}\] and from the fact that the images of
$H^1(F_U, S)$ and $H^1(k(U), S)$ in $\prod_{(U, P)} H^1(\kappa(P), S)$
coincide by Proposition~\ref{value}(\ref{value_c}).
\end{proof}

%----------------------------------------------------------------
\subsection{Applications to $\Sha(F,T)$}
%----------------------------------------------------------------

\renewcommand{\theenumi}{\alph{enumi}}
\renewcommand{\labelenumi}{(\alph{enumi})}

\begin{prop}\label{coefficient-relation}
Let $T$ be an $R$-torus
and let $1\rightarrow S\rightarrow Q\rightarrow T\rightarrow 1$ be a flasque resolution of $T$.
\begin{enumerate}
\item  \label{Sha upper bound}
There is a natural surjection
$$\phi: H^1(\Gamma, \kapcoef)\rightarrow H^1(\Gamma, \fcoef)=\Sha(F,T).$$
\item  \label{constant coeff sys}
If $A$ is a constant coefficient system, with value $H^1(\ell,S)$ on
each marking, where $\ell/k$ is a finite field extension, then
$$H^1(\Gamma, A)=H^1(\Gamma_{\operatorname{top}},
H^1(\ell,S))=\operatorname{Hom}(H_1(\Gamma_{\operatorname{top}},{\mathbb
Z}), H^1(\ell,S)),$$
where $\Gamma_{\operatorname{top}}$ denotes the topological space associated to $\Gamma$.
\item  \label{Sha for all lines}
If every component of the closed fiber $X$ of $\XX$ is isomorphic to a projective line (over its field of constants), then $\phi$ is an isomorphism.
\end{enumerate}
\end{prop}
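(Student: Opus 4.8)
The plan is to deduce all three parts from a comparison of the decorated graphs $(\Gamma,\mathcal H_\kappa(S))$, $(\Gamma,\mathcal H_k(S))$ and $(\Gamma,\mathcal H_F(S))$, using Lemma~\ref{ShaFromGraph} to identify $\Sha(F,T)=H^1(\Gamma,\mathcal H_F(S))$ with $H^1(\Gamma,\mathcal H_k(S))$. Since $T$ is an $R$-torus, $S$ is flasque over $R$, hence restricts to a flasque torus over each of the fields $\kappa(P)$ ($P\in\PP$), $\kappa(U)=\OO(\bar U)$ and $k(U)=k(\bar U)$ ($U\in\UU$), and over the discrete valuation ring $\OO_{\bar U,P}$ of the regular curve $\bar U$ at $P$; these restrictions will be used without further comment.

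For part~(\ref{Sha upper bound}) I would first construct a morphism of coefficient systems $\mu\colon\mathcal H_\kappa(S)\to\mathcal H_k(S)$: it is the identity on every edge group $H^1(\kappa(P),S)$ and on every vertex group $H^1(\kappa(P),S)$ at a point $P\in\PP$, and on the vertex group at $U\in\UU$ it is the restriction map $H^1(\kappa(U),S)\to H^1(k(U),S)$ attached to the inclusion $\kappa(U)\hookrightarrow k(U)$ of the field of constants into the function field of $\bar U$. The only thing to verify is that $\mu$ commutes with the half-edge maps at a branch $(U,P)$; at the $P$-end both maps are the identity, and at the $U$-end one needs that restriction along $\kappa(U)\hookrightarrow\kappa(P)$ coincides with restriction along $\kappa(U)\hookrightarrow k(U)$ followed by the specialization map $H^1(k(U),S)\to H^1(\kappa(P),S)$. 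This follows from functoriality of \'etale cohomology, since both $\kappa(U)\hookrightarrow\kappa(P)$ and $\kappa(U)\hookrightarrow k(U)$ factor through $\OO_{\bar U,P}$, and the specialization map is by construction the composite $H^1(k(U),S)\xleftarrow{\ \sim\ }H^1_{\et}(\OO_{\bar U,P},S)\to H^1(\kappa(P),S)$. Because $\mu$ is the identity on the degree-one terms $\bigoplus_{(U,P)}H^1(\kappa(P),S)$ of the two cochain complexes, the induced map $H^1(\Gamma,\mathcal H_\kappa(S))\to H^1(\Gamma,\mathcal H_k(S))$ is a map of cokernels induced by an isomorphism of the degree-one target groups, hence surjective; composing with the isomorphism $H^1(\Gamma,\mathcal H_k(S))\simeq\Sha(F,T)$ of Lemma~\ref{ShaFromGraph} defines the required surjection $\phi$.

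For part~(\ref{constant coeff sys}), the reduction graph $\Gamma$ is bipartite, with parts $\PP$ and $\UU$, so Example~\ref{graph coho ex}(\ref{bipartite ex}) applies to the (constant, with identity half-edge maps) coefficient system $A$ of value $A_0=H^1(\ell,S)$ and yields $H^1(\Gamma,A)\simeq H^1(\Gamma_{\operatorname{top}},A_0)$. The remaining identification $H^1(\Gamma_{\operatorname{top}},A_0)\simeq\operatorname{Hom}(H_1(\Gamma_{\operatorname{top}},\Z),A_0)$ is the universal coefficient theorem applied to the one-dimensional CW complex $\Gamma_{\operatorname{top}}$: since $H_0(\Gamma_{\operatorname{top}},\Z)$ is free, the $\operatorname{Ext}$-term vanishes.

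For part~(\ref{Sha for all lines}), the hypothesis says that each $\bar U$ is a projective line over its field of constants $\kappa(U)$, so $k(U)=\kappa(U)(x)$ is a rational function field. Writing $H^1(k(U),S)=\varinjlim_V H^1(V,S)$ over the nonempty open subschemes $V\subseteq\A^1_{\kappa(U)}$ and invoking Properties~\ref{properties_flasque}(\ref{aff flasque coho iso}) with $n=1$ shows that the restriction $H^1(\kappa(U),S)\to H^1(k(U),S)$ is an isomorphism, for every $U\in\UU$. Hence $\mu$ is an isomorphism of coefficient systems, and therefore $\phi$, which factors as $H^1(\Gamma,\mathcal H_\kappa(S))\overset{\sim}{\longrightarrow}H^1(\Gamma,\mathcal H_k(S))\overset{\sim}{\longrightarrow}\Sha(F,T)$, is an isomorphism. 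The main (and essentially only) substantive point is this last appeal to the defining property of flasque tori, namely the insensitivity of $H^1$ to passing to a rational function field; the compatibility of restriction with restriction-then-specialization in part~(\ref{Sha upper bound}) is the only other thing requiring care, and everything else is bookkeeping with the cochain complex of a decorated graph together with the bipartiteness of $\Gamma$.
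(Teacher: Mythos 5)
Your proof is correct and follows essentially the same route as the paper: Lemma~\ref{ShaFromGraph} to pass from $\Sha(F,T)$ to $H^1(\Gamma,\kcoef)$, the natural comparison morphism $\kapcoef\to\kcoef$ which is the identity on edge decorations (surjectivity on $H^1$ by comparing cokernels of maps into the same degree-one group), and Example~\ref{graph coho ex}(\ref{bipartite ex}) plus the universal coefficient theorem for part~(\ref{constant coeff sys}). The only divergence is in part~(\ref{Sha for all lines}): you show directly that the restriction $H^1(\kappa(U),S)\to H^1(k(U),S)$ is an \emph{isomorphism}, by writing $k(U)$ as a colimit of opens of $\A^1_{\kappa(U)}$ and invoking Properties~\ref{properties_flasque}(\ref{aff flasque coho iso}), so that $\mu$ is an isomorphism of coefficient systems; the paper instead establishes only surjectivity of this map (via Properties~\ref{properties_flasque}(\ref{aff flasque coho iso}) and~(\ref{flasque coho surj})), forms the short exact sequence $0\to\mathcal{K}\to\kapcoef\to\kcoef\to 0$, and uses that $\mathcal{K}$ is supported on vertices to get $H^1(\Gamma,\mathcal{K})=0$ and hence the isomorphism via Lemma~\ref{ses}. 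Both are valid; your version slightly sharpens the local statement at $U$ (and indeed the paper itself uses such a strengthening later, in the proof of Theorem~\ref{shaP1}), while the paper's argument avoids the need to justify the colimit identification $H^1(k(U),S)=\varinjlim_V H^1(V,S)$.
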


\begin{proof}
By Lemma~\ref{ShaFromGraph}, $\Sha(F,T)\simeq H^1(\Gamma, \fcoef) \simeq
H^1(\Gamma, \kcoef)$. But there is a natural map of coefficient systems $\kapcoef
\to \kcoef$ which is the identity on the decorations for the edges $(U,
P)$. Since we have a commutative diagram
\[\xymatrix{
\mc C^0(\Gamma, \kapcoef) \ar[d] \ar[r] & \mc C^1(\Gamma, \kapcoef) \ar@{=}[d] \\
\mc C^0(\Gamma, \kcoef) \ar[r] & \mc C^1(\Gamma, \kcoef), \\
}\]
it follows that the image of the bottom map contains the image of the top map, which induces a surjective map of cokernels $H^1(\Gamma, \kapcoef)
\to H^1(\Gamma, \kcoef)$.
This proves part~(\ref{Sha upper bound}).

The second equality in part~(\ref{constant coeff sys}) is by the universal coefficient theorem, using that the coefficient system is constant. The first equality is the content of Example~\ref{graph coho ex}(\ref{bipartite ex}), because the reduction graph $\Gamma$ is bipartite.

Finally, for part~(\ref{Sha for all lines}), since
every component is isomorphic to a projective line,
we have an isomorphism $H^1(\kappa(U), S) \to H^1_{\text{\'et}}(U,S)$ by
Property~\ref{properties_flasque}(\ref{aff flasque coho iso}),
and a surjection $H^1_{\text{\'et}}(U,S) \to H^1(k(U),S)$ by
Property~\ref{properties_flasque}(\ref{flasque coho surj});
so the composition $H^1(\kappa(U), S) \to H^1(k(U), S)$ is surjective.
In particular, if we consider the coefficient system ${\mathcal K}$ defined by the
exact sequence $0 \to {\mathcal K} \to \kapcoef \to \kcoef \to
0$, then ${\mathcal K}$ is nonzero only on markings coming from vertices, not
edges. It follows that $H^1(\Gamma, {\mathcal K}) = 0$; and by the long exact sequence,
$H^1(\Gamma, \kapcoef) \simeq H^1(\Gamma, \kcoef) \simeq \Sha(F, T)$, with the
second isomorphism holding by Lemma~\ref{ShaFromGraph}.  This proves~(\ref{Sha for all lines}).
\end{proof}

Proposition~\ref{coefficient-relation} gives an upper bound on $\Sha(F,T)$, for $T$ an $\XX$-torus, in terms of the constant fields of the components of the reduced closed fiber $X^{\operatorname{red}}$ of our normal crossings model $\XX$ and the residue fields of the intersection points of the irreducible components of $X^{\operatorname{red}}$.  In particular, we immediately obtain the following:

\begin{theorem} \label{Sha via coef sys}
Let $\XX$ be a normal crossings model of a semi-global field $F$ over a complete discretely valued field $K$ with residue field $k$, and suppose that the reduced closed fiber $X^{\operatorname{red}}$ consists of $k$-curves $C_i$ that intersect only at $k$-points. Let $m$ be the number of cycles in the reduction graph (i.e., the rank of $H_1(\Gamma_{\operatorname{top}},{\mathbb Z})$). Let $T$ be an $R$-torus and let $1\rightarrow S\rightarrow Q\rightarrow T\rightarrow 1$ be a flasque resolution of~$T$.
\renewcommand{\theenumi}{\alph{enumi}}
\renewcommand{\labelenumi}{(\alph{enumi})}
\begin{enumerate}
\item The group $\Sha(F,T)$ is a quotient of $H^1(k,S)^m$.
\item\label{tree} If the reduction graph is a tree, then $\Sha(F,T)$ is trivial.
\item\label{all P^1} If each $C_i$ is isomorphic to $\P^1_k$, then $\Sha(F,T)$ is isomorphic to $H^1(k,S)^m$.  Hence
$\Sha(F,T)$ is trivial if and only if the reduction graph is a tree or $H^1(k,S)$ vanishes.
\end{enumerate}
\end{theorem}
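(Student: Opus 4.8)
The plan is to derive all three parts from Proposition~\ref{coefficient-relation} together with the graph-theoretic machinery of Section~\ref{graphs}, reducing everything to a computation of $H^1(\Gamma, \kapcoef)$. First I would record the key structural fact: by hypothesis the intersection points of the $C_i$ are $k$-points, so $\kappa(P_0) = k$ for each $P_0 \in \PP_0$; and more generally, enlarging $\PP$ only introduces extra points $P$ with $\kappa(P)$ a finite extension of $\kappa(U) = k(C_i)$ for the unique component containing $U$. The cleanest route, however, is to observe that for the purpose of computing $\Sha(F,T)$ one is free to choose $\PP$, since by Theorem~\ref{allshaTequal} all the groups $\Sha_\PP(F,T)$ coincide (for varying admissible $\PP$) with $\Sha(F,T)$. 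So I would take $\PP = \PP_0$, the minimal choice. Then every vertex $P \in \PP$ has $\kappa(P) = k$, and every edge is decorated by $H^1(k,S)$ in the coefficient system $\kapcoef$.

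For part~(b): if $\Gamma$ is a tree, then the dual graph $\Gamma_{\mathrm D}$ is a tree as well (they are homotopy equivalent by Remark~\ref{dual graph rk}), and I would either invoke Proposition~\ref{contractible} after checking its hypothesis — choosing a root and noting that for a parent-child edge the half edge at the child $P \in \PP$ carries the identity map $H^1(k,S) \to H^1(k,S)$ (when the child is a point) or the specialization map together with surjectivity considerations (when the child is a $U$) — or, more simply, argue that $H^1(\Gamma_{\mathrm{top}},\Z) = 0$ for a tree and apply Proposition~\ref{coefficient-relation}(a), which gives a surjection from $H^1(\Gamma,\kapcoef)$, after first reducing $\kapcoef$ to a constant system. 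Actually the cleanest argument: with $\PP = \PP_0$ the system $\kapcoef$ is \emph{constant} (value $H^1(k,S)$) on all edges and on all point-vertices, and on $U$-vertices it is $H^1(k(U),S) = H^1(\kappa(U),S)$ with the half-edge map $\kappa(U) = k \to \kappa(P) = k$ the identity — wait, $\kappa(U)$ need not equal $k$. So I would instead run the short exact sequence argument: let $\kcoef$ and $\kapcoef$ be as in Section~\ref{coeffs}, form $0 \to \mathcal K \to \kapcoef \to \kcoef \to 0$; since for a tree $H^1(\Gamma, \kcoef) \simeq \Sha(F,T)$ and the contraction lemma (Lemma~\ref{contraction}) or a direct acyclicity computation shows $H^1$ of a constant-on-a-tree-type system vanishes, conclude $\Sha(F,T)$ is a quotient of $H^1(\Gamma,\kcoef)$ which itself vanishes because a tree has trivial $H^1$. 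More honestly, part (b) follows from part (a): $H^1(k,S)^m$ with $m = 0$ is trivial.

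For part~(a) and part~(c): part~(a) is essentially immediate from Proposition~\ref{coefficient-relation}(a) once I show $H^1(\Gamma, \kapcoef)$ is a quotient of $H^1(k,S)^m$. To see this, I would pass through $\kapcoef \to \kcoef$ once more, or better: observe $H^1(\Gamma,\kapcoef)$ surjects onto $\Sha(F,T)$, and separately bound $H^1(\Gamma,\kapcoef)$. Hmm — the markings $\kappa(U)$ are function fields, not finite over $k$, so $\kapcoef$ is not constant and Example~\ref{graph coho ex}(b) does not directly apply. The right move for part~(a): use Lemma~\ref{ShaFromGraph}, $\Sha(F,T) \simeq H^1(\Gamma,\kcoef)$; then the half-edge maps at $U$-vertices are the specialization maps $H^1(k(U),S)\to H^1(\kappa(P),S)$, and I want to contract away the $U$-vertices. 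With $\PP = \PP_0$, each edge at $U$ ends at a point $P$ with $\kappa(P) = k$; applying Proposition~\ref{value}(c) (or its corollary), the image of $H^1(F_U,S)$ in $\prod_{(U,P)} H^1(\kappa(P),S) = H^1(k,S)^{\deg U}$ equals the image of the diagonal when components are $\P^1$ — but in general (a) only needs that this image contains, hence the relevant $\mathcal C^0$ term surjects onto, enough to kill the tree part of $\Gamma$. So I would take a spanning tree $\mathcal T$ of $\Gamma$; contract along its edges using Lemma~\ref{contraction} (checking redundancy of the appropriate half edges, which holds when the specialization maps and restriction maps in play are surjective, cf.\ Properties~\ref{properties_flasque}(b)); after contraction we are left with a wedge of $m$ loops based at a single vertex, and the cohomology of such a decorated graph is visibly a quotient of $H^1(k,S)^m$ (one copy per loop, being a cokernel of a map into $\bigoplus_{\text{loops}} H^1(k,S)$). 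That gives~(a).

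For part~(c): when every $C_i \cong \P^1_k$, Proposition~\ref{coefficient-relation}(c) says $\phi$ is an isomorphism, so $\Sha(F,T) \simeq H^1(\Gamma,\kapcoef)$; and now by Corollary~\ref{corvalue}(b) the image of $H^1(F_U,S)$ in $\prod_j H^1(\kappa(P_j),S)$ is exactly the image of the diagonal $H^1(k,S) \to \prod_j H^1(\kappa(P_j),S)$, which — since with $\PP = \PP_0$ each $\kappa(P_j) = k$ — is the full diagonal. This makes $\kapcoef$ isomorphic (via Example~\ref{graph coho ex}-style comparison) to the constant system with value $H^1(k,S)$, so by Proposition~\ref{coefficient-relation}(b), $H^1(\Gamma,\kapcoef) = H^1(\Gamma_{\mathrm{top}}, H^1(k,S)) = \Hom(H_1(\Gamma_{\mathrm{top}},\Z), H^1(k,S)) = H^1(k,S)^m$ since $H_1(\Gamma_{\mathrm{top}},\Z) \cong \Z^m$. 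The final clause is then pure logic: $H^1(k,S)^m$ is trivial iff $m = 0$ (the tree case) or $H^1(k,S) = 0$. I expect the main obstacle to be the bookkeeping in part~(a): justifying that one may contract along a spanning tree even though the half-edge maps at the $U$-vertices (specialization maps) need not be isomorphisms — here I would lean on choosing $\PP = \PP_0$ so that the point-vertices all have residue field $k$ and carry identity half-edge maps, reducing the contraction to collapsing edges whose point-end half edge is an isomorphism, which is exactly the redundancy hypothesis of Lemma~\ref{contraction}; the $U$-vertices then get absorbed, and the possible failure of surjectivity of the specialization map at a $U$ only makes $H^1(\Gamma,\kcoef)$ \emph{smaller}, consistent with the ``quotient'' assertion.
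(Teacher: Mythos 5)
You correctly identify $\PP = \PP_0$ as the right choice and Proposition~\ref{coefficient-relation} as the key tool, but the argument goes off the rails because of a genuine confusion about the coefficient system $\mathcal{H}_{\kappa}(S)$: you conflate $\kappa(U)$ with $k(U)$. In the paper's notation, $\kappa(U)=\OO(\bar U)$ is the \emph{constant field} of the projective curve $\bar U$ (a finite extension of $k$), not its function field $k(U)$. Your worry that ``$\kappa(U)$ need not equal $k$'' is unfounded under the hypotheses of the theorem: if $P$ is a $k$-point on $\bar U$, then $\kappa(U)\hookrightarrow \kappa(P)=k$ forces $\kappa(U)=k$, and likewise for the single-component case via the ``$k$-curve'' assumption. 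Hence once $\PP=\PP_0$, every vertex and edge in $\mathcal{H}_{\kappa}(S)$ carries $H^1(k,S)$ and every half-edge map is the identity; i.e., $\mathcal{H}_{\kappa}(S)$ \emph{is} the constant coefficient system with value $H^1(k,S)$. Proposition~\ref{coefficient-relation}(\ref{constant coeff sys}) then gives $H^1(\Gamma,\mathcal{H}_{\kappa}(S))=\Hom(H_1(\Gamma_{\operatorname{top}},\Z),H^1(k,S))=H^1(k,S)^m$, and all three parts of the theorem drop out immediately from parts (a), (b), (c) of that proposition; this is exactly what the paper means by ``we immediately obtain.''

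Because you miss this, your substitute argument for part (a) — contracting $(\Gamma,\mathcal{H}_k(S))$ along a spanning tree — is both unnecessary and has a real gap. The half-edge maps of $\mathcal{H}_k(S)$ at $U$-vertices are specialization maps $H^1(k(U),S)\to H^1(\kappa(P),S)$, and these need not be isomorphisms; so those half edges are not redundant and Lemma~\ref{contraction} cannot be applied along them. The closing assertion that the failure of injectivity ``only makes $H^1(\Gamma,\mathcal{H}_k(S))$ smaller, consistent with the quotient assertion'' is a heuristic, not an argument that the cohomology of the contracted graph is still a quotient of $H^1(k,S)^m$. Similarly, your detour through Corollary~\ref{corvalue}(\ref{cor_component}) for part (c) is not needed and in fact obscures the point: the constancy of $\mathcal{H}_{\kappa}(S)$ is a definitional consequence of the hypothesis, not something to be inferred from the behavior of $H^1(F_U,S)$. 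Your final conclusions for (b) and (c) are correct, and your observation that (b) follows from (a) is fine, but the intended route to (a) is just the observation that $\mathcal{H}_{\kappa}(S)$ is constant.
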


The following theorem gives a generalization of the last part of Theorem~\ref{Sha via coef sys}. Its proof provides a general strategy for utilizing coefficient systems.

\begin{theorem}\label{shaP1}

Let $\XX$ be a normal crossings model of a semi-global field $F$ over a complete discretely valued field~$K$ with residue field~$k$, and suppose that the reduced closed fiber $X^{\operatorname{red}}$ consists of a union of copies of ${\mathbb P}^1_k$. Let $\PP$ be the set of intersection points (which might not be rational points). Let $\Gamma$ be the associated reduction graph. Then for any $R$-torus $T$ with flasque resolution $1\rightarrow S\rightarrow Q\rightarrow T\rightarrow 1$,  there is an exact sequence

$$\Hom(H_1(\Gamma_{\operatorname{top}},{\mathbb Z}), H^1(k,S))\rightarrow \Sha(F,T)\rightarrow \prod_{P\in \PP}\frac{H^1(\kappa(P),S)}{\mathrm{im}\, H^1(k,S)}\rightarrow 0.$$

\end{theorem}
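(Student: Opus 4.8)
The plan is to exhibit the asserted exact sequence as a fragment of the cohomology long exact sequence (Lemma~\ref{ses}) attached to a short exact sequence of coefficient systems on the reduction graph $\Gamma$, and then to evaluate its three terms. First I would fix the given flasque resolution $1 \to S \to Q \to T \to 1$ over $R$ and record two facts. Since every component of $X^{\operatorname{red}}$ is $k$-isomorphic to $\P^1_k$, Proposition~\ref{coefficient-relation}(\ref{Sha for all lines}) says that the canonical map $\phi : H^1(\Gamma, \kapcoef) \to \Sha(F, T)$ is an isomorphism, so it is enough to produce an exact sequence with $H^1(\Gamma, \kapcoef)$ in the middle. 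Also, for each $U \in \UU$ one has $\kappa(U) = \OO(\bar U) = k$, since $\bar U$ is one of these components.

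Next I would introduce the constant coefficient system $\mc A$ on $\Gamma$ all of whose vertex- and edge-groups are $H^1(k, S)$ and all of whose half-edge maps are the identity, together with the morphism of coefficient systems $\iota : \mc A \to \kapcoef$ that is the identity $H^1(k, S) = H^1(\kappa(U), S)$ on each vertex $U \in \UU$ and is the restriction map $H^1(k, S) \to H^1(\kappa(P), S)$ on each vertex $P \in \PP$ and each edge $(U, P)$; that $\iota$ commutes with the half-edge maps is immediate from the description of $\kapcoef$ in Section~\ref{coeffs}. I would then let $\mc A'$ be the image of $\iota$ and $\mc E$ its cokernel. Computing componentwise, $\mc E$ vanishes on each vertex of $\UU$, equals $H^1(\kappa(P), S)/\mathrm{im}\,H^1(k, S)$ on each vertex $P \in \PP$ and on each edge $(U, P)$, with the half-edge at $P$ the identity and the half-edge at $U$ the zero map.

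Now I would run Lemma~\ref{ses} twice. Applied to $0 \to \mc A' \to \kapcoef \to \mc E \to 0$ it gives exactness of $H^1(\Gamma, \mc A') \to H^1(\Gamma, \kapcoef) \to H^1(\Gamma, \mc E) \to 0$; applied to $0 \to \ker \iota \to \mc A \to \mc A' \to 0$ it gives surjectivity of $H^1(\Gamma, \mc A) \to H^1(\Gamma, \mc A')$. Splicing, the sequence $H^1(\Gamma, \mc A) \to H^1(\Gamma, \kapcoef) \to H^1(\Gamma, \mc E) \to 0$ is exact. For the left-hand term, $\mc A$ is a constant coefficient system with value $H^1(k, S)$, so Proposition~\ref{coefficient-relation}(\ref{constant coeff sys}) gives $H^1(\Gamma, \mc A) \cong \Hom(H_1(\Gamma_{\operatorname{top}}, \Z), H^1(k, S))$. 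For the right-hand term, writing $M_P := H^1(\kappa(P), S)/\mathrm{im}\,H^1(k, S)$, the complex is $\mc C^0(\Gamma, \mc E) = \bigoplus_{P \in \PP} M_P \to \mc C^1(\Gamma, \mc E) = \bigoplus_{(U, P)} M_P$, with the differential sending $(a_P)_P$ to the tuple whose entry at the edge $(U, P)$ is $a_P$. Here the geometric hypotheses enter: since $X^{\operatorname{red}}$ is a normal crossings union of copies of $\P^1_k$ and $\PP$ is exactly the set of intersection points, each $P \in \PP$ lies on exactly two of the components, hence is the endpoint of exactly two branches. Regrouping then identifies $\mc C^1(\Gamma, \mc E) \cong \bigoplus_P (M_P \oplus M_P)$ and the differential with $(a_P)_P \mapsto ((a_P, a_P))_P$, whose cokernel is $\bigoplus_P M_P = \prod_{P \in \PP} M_P$ (a finite product). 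Transporting the middle term along $\phi$ yields the stated sequence.

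The step I expect to require genuine care — indeed the only real subtlety — is this computation of $H^1(\Gamma, \mc E)$: it is essential that every $P$-vertex of $\Gamma$ has degree exactly $2$, for a $P$-vertex of degree $d$ would contribute $M_P^{d-1}$ rather than $M_P$ to the cokernel. A secondary point is that $\iota$ is generally not injective, since the restriction maps $H^1(k, S) \to H^1(\kappa(P), S)$ can have nontrivial kernel; this is why one passes to $\mc A' = \mathrm{im}\,\iota$ and uses the surjection $H^1(\Gamma, \mc A) \twoheadrightarrow H^1(\Gamma, \mc A')$ rather than placing $\mc A$ directly in a short exact sequence with $\kapcoef$.
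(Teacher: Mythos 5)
Your argument is correct and follows essentially the same strategy as the paper's: introduce a constant coefficient system with value $H^1(k,S)$, map it to the natural coefficient system carrying $\Sha(F,T)$ as its $H^1$, apply Lemma~\ref{ses} twice (via the image to handle the non-injectivity of the restriction maps $H^1(k,S)\to H^1(\kappa(P),S)$), identify the left term via Proposition~\ref{coefficient-relation}(\ref{constant coeff sys}), and compute the cokernel term using that every $P\in\PP$ has degree exactly two in $\Gamma$ since it lies on precisely two regular components in a normal crossings model. The only cosmetic difference is that you work with $\kapcoef$ and Proposition~\ref{coefficient-relation}(\ref{Sha for all lines}) rather than with $\kcoef$ and Lemma~\ref{ShaFromGraph} as the paper does; under the hypothesis that every component is $\P^1_k$ these two coefficient systems are isomorphic (the paper in fact notes $H^1(k,S)\simeq H^1(k(U),S)$ for each $U$), so the two routes are equivalent.
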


\begin{proof}
If $\PP$ is empty, i.e., $X^{\operatorname{red}}={\mathbb P}^1_k$, the claim follows from Theorem~\ref{Sha via coef sys}(\ref{all P^1}). So we may assume that $\PP$ is not empty.
Consider the coefficient system $\kcoef$. As in the proof of Proposition~\ref{coefficient-relation}, one sees that there is a surjection $H^1(\kappa(U), S)=H^1(k,S) \to H^1(k(U), S)$ for each component~$U$. By specializing to a $k$-point one checks that this is also injective, so $H^1(k(U),S)\simeq H^1(k,S)$ for each~$U$.
Let $A_\bullet$ be the constant coefficient system given by decorating every vertex and edge with~$H^1(k,S)$, and let $C_\bullet$ denote the quotient of the coefficient system $\kcoef$ modulo $A_\bullet$. Since $H^1(k,S)=H^1(k(U),S)$ for each component $U$, $C_\bullet$ has trivial decorations at such~$U$. In particular, $H^0(\Gamma, C_\bullet)=0$.  Let $A'_\bullet$, $A''_\bullet$ be the kernel and image of $A_\bullet \to \mathcal{H}_k(S)$, respectively.  Applying Lemma~\ref{ses} to 
$0 \to A'_\bullet \to A_\bullet \to A''_\bullet \to 0$ and to $0 \to A''_\bullet \to \kcoef\to C_\bullet \to 0$ gives an exact sequence
$$H^1(\Gamma, A_\bullet)\rightarrow H^1(\Gamma,\kcoef)\rightarrow H^1(\Gamma,C_\bullet)\rightarrow 0.$$

The middle term is $\Sha(F,T)$ by Lemma~\ref{ShaFromGraph}. The left term is $\Hom(H_1(\Gamma_{\operatorname{top}},{\mathbb Z}), H^1(k,S))$ by Proposition~\ref{coefficient-relation}(\ref{constant coeff sys}).
To calculate the third term, recall that $C_\bullet$ has trivial decorations at components~$U$. The decoration at a point $P\in \PP$ is $\frac{H^1(\kappa(P),S)}{H^1(k,S)}$, and the decoration at a branch $(U,P)$ is also $\frac{H^1(\kappa(P),S)}{H^1(k,S)}$. The third term is the quotient of $\prod_{(P,U)} \frac{H^1(\kappa(P),S)}{H^1(k,S)}$ by $\prod_{P\in \PP}\frac{H^1(\kappa(P),S)}{H^1(k,S)}$. But each $P$ lies on exactly two branches, so that quotient is  $\prod_{P\in \PP}\frac{H^1(\kappa(P),S)}{H^1(k,S)}$ as claimed.
\end{proof}
This theorem will be used in Section~\ref{exex} to provide explicit examples where $\Sha(F,T)$ is non-trivial.

%==========================================================
\section{Monotonic reduction graphs and the vanishing of $\Sha(F,T)$}\label{monotonic}
%==========================================================

We next obtain a vanishing result for $\Sha(F,T)$ for a special type of reduction graph, which we call a ``monotonic tree''.  We also give two alternate descriptions of such trees, and we give examples to show that other natural tree-like assumptions on the reduction graph do not suffice to guarantee the vanishing of $\Sha(F,T)$.

%---------------------------------------------------------------------------------------------------
\subsection{Vanishing of $\Sha(F,T)$}\label{vanishing}
%---------------------------------------------------------------------------------------------------

Let $F$ be a semi-global field with normal crossings model $\XX$, and let $\Gamma$ be the associated reduction graph.
To each vertex $v$ of the reduction graph $\Gamma$ we associate the field $\kappa(v)$.  We say that the reduction graph $\Gamma$ is {\em monotonic} if it is a tree and there is a choice of some vertex $v_0$ (the ``root'') with the following property:
If $v,w$ are vertices of $\Gamma$ and $v$ is the parent of $w$ (i.e., $v,w$ are adjacent, and $v$ lies on the unique path connecting $v_0$ to $w$)
then $\kappa(v) \subseteq \kappa(w)$.
Recall that if $P \in \PP$ lies on the closure of $U \in \UU$, then $\kappa(P)$ necessarily contains $\kappa(U)=\OO(\bar U)$; hence if $P$ is the parent of $U$ on a monotonic tree
then necessarily $\kappa(P)=\kappa(U)$.

\begin{remark}
If $\XX$ is a normal crossings model with associated reduction graph $\Gamma$ as above, it is easy to check that the reduction graph of any blow-up of $\XX$ is monotonic if and only if $\Gamma$ is monotonic. For any two normal crossings models ${\XX}_i$, $i=1,2$,
of~$F$ there is a normal crossing model ${\XX}$ of $F$
and morphisms $\XX \to {\XX}_i$ which are
given by a series of blow-ups. Indeed, one first considers the
closure ${\mc Z} \subset {\XX}_1\times_R {\XX}_2$
of the diagonal of the generic fibers. By \cite[page 193]{Lip75}
there exist a normal crossings model ${\XX}$ and
a proper birational $R$-morphism ${\XX}\to {\mc Z}.$
By \cite{lichtenbaum}, Theorem II.1.15, each composite morphism
${\XX}\to {\mc Z} \to {\XX}_i$ is a composite of
blow-ups in closed points.
So for a semi-global field $F$, either all associated reduction graphs are monotonic or none of them is.
\end{remark}

Let $S$ be an $R$-torus, and let $\kapcoef$ be the coefficient system on $\Gamma$ defined in Subsection~\ref{coeffs}. From Proposition~\ref{contractible}, we immediately obtain:
\begin{lemma}\label{monocontractible}
With $F, \XX, \Gamma$ as above, assume that $\Gamma$ is a monotonic tree. Then $(\Gamma, \kapcoef)$ can be contracted to a point.
\end{lemma}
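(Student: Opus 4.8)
The plan is to deduce this directly from Proposition~\ref{contractible} by verifying its hypothesis for the decorated graph $(\Gamma, \kapcoef)$ when $\Gamma$ is a monotonic tree. Since $\Gamma$ is assumed to be a tree, it remains only to check the redundancy condition on half edges for a suitable choice of root. First I would take the root $v_0$ of $(\Gamma,\kapcoef)$ to be the same vertex $v_0$ witnessing the monotonicity of $\Gamma$.

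Next, I would examine an arbitrary parent-child pair $(v,w)$ with edge $e$ and half edge $\alpha$ attached to $w$. Because $\Gamma$ is bipartite with vertex set $\PP \cup \UU$, and every edge joins a point of $\PP$ to a component of $\UU$, there are exactly two cases: either $w \in \UU$ and $v \in \PP$, or $w \in \PP$ and $v \in \UU$. In the first case, $v = P$ is the parent of $w = U$, so by monotonicity $\kappa(P) \subseteq \kappa(U)$; but as recalled just before the statement of Lemma~\ref{monocontractible}, one always has $\kappa(U) \subseteq \kappa(P)$ when $P$ lies on $\bar U$, so in fact $\kappa(P) = \kappa(U)$. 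In the second case, $v = U$ is the parent of $w = P$, and monotonicity gives $\kappa(U) \subseteq \kappa(P)$; combined again with the automatic reverse inclusion, we again get $\kappa(U) = \kappa(P)$. So in either case the two fields attached to the ends of $e$ coincide.

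Now I would unwind the definition of $\kapcoef$ at the half edge $\alpha$. The edge $e = (U,P)$ is decorated with $H^1(\kappa(P),S)$, and the half edge map $(\kapcoef)_\alpha$ attached to $w$ is, by the definition of $\kapcoef$ in Subsection~\ref{coeffs}, either the identity map $H^1(\kappa(P),S) \to H^1(\kappa(P),S)$ (when $w = P \in \PP$) or the map $H^1(\kappa(U),S) \to H^1(\kappa(P),S)$ induced by the inclusion $\kappa(U) \hookrightarrow \kappa(P)$ (when $w = U \in \UU$). In the first case $(\kapcoef)_\alpha$ is literally the identity, hence an isomorphism. In the second case, since we have just shown $\kappa(U) = \kappa(P)$, the inclusion $\kappa(U)\hookrightarrow\kappa(P)$ is the identity, so the induced map on $H^1(\cdot,S)$ is again the identity, hence an isomorphism. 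Thus $\alpha$ is redundant, the hypothesis of Proposition~\ref{contractible} holds, and $(\Gamma,\kapcoef)$ can be contracted to a point.

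There is no real obstacle here; the only thing to be careful about is the bookkeeping of which of the two vertices of an edge is the parent and which decoration and half-edge map is attached to the child, since the definition of $\kapcoef$ treats $\PP$-vertices and $\UU$-vertices asymmetrically. The key point making both cases work uniformly is the forced equality $\kappa(P) = \kappa(U)$ along every edge of a monotonic tree, which collapses the potentially nontrivial inclusion-induced map to an identity.
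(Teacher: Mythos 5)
Your approach is the same as the paper's: verify the hypothesis of Proposition~\ref{contractible} for $(\Gamma,\kapcoef)$ using monotonicity and the definition of $\kapcoef$, so the structure of the argument is correct. However, one of your intermediate claims is false, even though it is harmless to the conclusion. In the case where $U$ is the parent of $P$, you write that monotonicity gives $\kappa(U)\subseteq\kappa(P)$ and that this, "combined again with the automatic reverse inclusion," yields $\kappa(U)=\kappa(P)$. There is no automatic reverse inclusion: when $P$ lies on $\bar U$ the automatic inclusion is $\kappa(U)\subseteq\kappa(P)$, the \emph{same} direction as what monotonicity gives you, so no equality is forced. Indeed, one can have $\kappa(U)\subsetneq\kappa(P)$ on such an edge in a monotonic tree. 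Consequently your closing summary --- "the forced equality $\kappa(P)=\kappa(U)$ along every edge of a monotonic tree" --- is also incorrect; the equality is forced only when $P$ is the parent of $U$. Fortunately your argument never actually uses the (false) equality in this case: when $w=P\in\PP$, the half-edge map is by definition of $\kapcoef$ the identity $H^1(\kappa(P),S)\to H^1(\kappa(P),S)$, an isomorphism regardless of the relation between $\kappa(U)$ and $\kappa(P)$. So the proof is correct once the spurious claim is deleted, and it coincides with the paper's proof.
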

\begin{proof}
The decorated graph $(\Gamma, \kapcoef)$ satisfies the condition in Proposition~\ref{contractible}, since it is monotonic, the decoration of each edge $(U,P)$ is $H^1(\kappa(P),S)$ by definition, and  $\kappa(P)=\kappa(U)$ if $P$ is the parent of~$U$.
\end{proof}

\begin{theorem} \label{triv Sha monotonic}
Let $K$ be a complete discretely valued field with valuation ring~$R$, and let $F$ be a semi-global field over~$K$. Let $\XX$ be a normal crossings model of $F$, and assume that the associated reduction graph $\Gamma$ is a monotonic tree. Then for any  $R$-torus $T$,  $\Sha(F,T)$ is trivial.
\end{theorem}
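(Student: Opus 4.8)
The plan is to deduce triviality of $\Sha(F,T)$ from the graph-theoretic comparison results established above, by contracting the relevant decorated graph to a point. First I would choose a flasque resolution $1\to S\to Q\to T\to 1$ of $T$ over~$R$, which exists by the discussion in Section~\ref{reminders}; then $S$ is a flasque $R$-torus, and the coefficient system $\kapcoef$ on the reduction graph $\Gamma$ of $\XX$ is defined. By Proposition~\ref{coefficient-relation}(\ref{Sha upper bound}) there is a natural surjection $\phi\colon H^1(\Gamma,\kapcoef)\twoheadrightarrow H^1(\Gamma,\fcoef)=\Sha(F,T)$, so it is enough to prove that $H^1(\Gamma,\kapcoef)=0$.

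The key step is to invoke Lemma~\ref{monocontractible}: since $\Gamma$ is a monotonic tree, the decorated graph $(\Gamma,\kapcoef)$ can be contracted to a point by a finite sequence of contractions along redundant half edges. By Lemma~\ref{contraction}, each such contraction induces an isomorphism on the cohomology of the associated cochain complex; hence $H^1(\Gamma,\kapcoef)\cong H^1(\Gamma',A')$, where $(\Gamma',A')$ is a decorated graph consisting of a single vertex with no edges. For such a graph the cochain complex $\mc C(\Gamma',A')$ has zero term in degree~$1$ (an empty direct sum over edges), so $H^1(\Gamma',A')=0$. Chaining these isomorphisms together with the surjection $\phi$ then yields $\Sha(F,T)=0$.

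Essentially all of the difficulty has been front-loaded into Lemmas~\ref{contraction} and~\ref{monocontractible} and Proposition~\ref{coefficient-relation}, so I do not anticipate a genuine obstacle here; the points that need care are bookkeeping ones. One is that Proposition~\ref{coefficient-relation} and the coefficient system $\kapcoef$ are set up for an $R$-torus rather than merely an $\XX$-torus, which is exactly the hypothesis of the theorem, so the surjection $\phi$ is indeed available. A second, cosmetic point is that being a monotonic tree is a property of $F$ rather than of the chosen model $\XX$ (by the remark preceding Lemma~\ref{monocontractible}), so the conclusion is independent of the model used; this is not needed in the argument itself, but it is worth recording alongside the statement.
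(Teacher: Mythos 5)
Your argument is correct and is essentially the same as the paper's proof: choose a flasque resolution over $R$, use Proposition~\ref{coefficient-relation}(\ref{Sha upper bound}) to reduce to showing $H^1(\Gamma,\kapcoef)=0$, and then combine Lemma~\ref{monocontractible} with Lemma~\ref{contraction} to contract $(\Gamma,\kapcoef)$ to a single vertex without changing cohomology. The only difference is that you spell out the bookkeeping (the degree-one term vanishing, the $R$- versus $\XX$-torus hypothesis, model-independence) a bit more explicitly, which the paper leaves implicit.
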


\begin{proof}
Let $1\rightarrow S\rightarrow Q\rightarrow T\rightarrow 1$ be a flasque resolution of $T$.  We then have a coefficient system $\kapcoef$ on $\Gamma$ as above.
By Proposition~\ref{coefficient-relation}, there is a surjection $H^1(\Gamma, \kapcoef)\rightarrow \Sha(F,T)$. So it suffices to show that  $H^1(\Gamma, \kapcoef)$ is trivial. By Lemma~\ref{monocontractible}, $(\Gamma, \kapcoef)$ can be contracted to a point. But Lemma~\ref{contraction} asserts that contraction does not change the cohomology of the graph. That is, $H^1(\Gamma, \kapcoef)$ is the cohomology of a (decorated) graph consisting of a single vertex and no edge and thus it is trivial, as we wanted to show.
\end{proof}

\begin{remark}
Note that in the situation of Theorem~\ref{Sha via coef sys}(\ref{tree}), the reduction graph is a monotonic tree, with all labels equal to $k$. Hence this illustrates the theorem above.
\end{remark}
%----------------------------------------------------------------------
\subsection{Characterizations of monotonic trees}
%----------------------------------------------------------------------

In this subsection, we give two characterizations of monotonic trees.

In the above situation, let $\PP_0$ be the subset of $\PP$ consisting of the points of $X^{\rm red}$ where two irreducible components meet.  We obtain a subgraph $\Gamma_0$ of $\Gamma$ having vertices $\PP_0 \cup \UU$, and having an edge connecting any two vertices that are adjacent in $\Gamma$.
The graph $\Gamma_0$ is a tree if and only if $\Gamma$ is; and in that case, $\Gamma_0$
is monotonic if and only if $\Gamma$ is.

\begin{prop}
Let $\XX$ be a normal crossings model for a semi-global field $F$, and suppose that the associated reduction graph $\Gamma$ is a tree.
Then $\Gamma$ is monotonic if and only if there is an injection $\psi:\PP_0 \to \UU$ such that for every $P \in \PP_0$, $P \in \overline{\psi(P)}$ and $\kappa(P)=\kappa(\psi(P))$.
\end{prop}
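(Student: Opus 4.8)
The plan is to prove the proposition by producing the injection $\psi$ explicitly from a monotonic structure, and conversely reconstructing a root from such an injection. Throughout I would work with the subgraph $\Gamma_0$ on $\PP_0 \cup \UU$, which by the remark preceding the statement is a tree that is monotonic exactly when $\Gamma$ is; this lets me ignore the extra points of $\PP \setminus \PP_0$, which are leaves attached to a single component and carry the same residue field as that component.

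First I would handle the forward direction. Suppose $\Gamma$ (equivalently $\Gamma_0$) is monotonic with root $v_0$. For each $P \in \PP_0$, the vertex $P$ has at least one child in the rooted tree $\Gamma_0$ (it cannot be a leaf, since $P$ lies on two components, hence has degree $\ge 2$ in $\Gamma_0$, so at most one neighbor is its parent). Every child of $P$ is some $U \in \UU$, and by monotonicity $\kappa(P) \subseteq \kappa(U)$; combined with the automatic inclusion $\kappa(U) = \OO(\bar U) \subseteq \kappa(P)$ (valid since $P \in \bar U$), this forces $\kappa(P) = \kappa(U)$. So I can define $\psi(P)$ to be, say, the first child of $P$ in some fixed ordering; then $P \in \overline{\psi(P)}$ and $\kappa(P) = \kappa(\psi(P))$ by construction. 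Injectivity of $\psi$ is immediate: distinct vertices of a rooted tree have distinct parents, so $\psi(P) = \psi(P')$ would force $P = P'$ (the parent of that common child). I should note that $\psi$ is well-defined into $\UU$ because in the bipartite tree $\Gamma_0$ the children of a $\PP_0$-vertex are exactly $\UU$-vertices.

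For the converse, suppose such an injection $\psi$ exists. I want to orient $\Gamma_0$ as a rooted tree so that for each $P \in \PP_0$ the edge to $\psi(P)$ points from $P$ to $\psi(P)$ (i.e. $\psi(P)$ is the child); the monotonicity condition $\kappa(\mathrm{parent}) \subseteq \kappa(\mathrm{child})$ will then hold on these edges (since $\kappa(P) = \kappa(\psi(P))$) and on all the remaining edges $(U,P)$ where $P$ is the parent automatically, because $\kappa(P) \supseteq \kappa(U)$ — wait, that is the wrong direction. This is the subtle point and the main obstacle: I need the parent of each $\UU$-vertex $U$ (other than those $U$ in the image of $\psi$, whose parent is the $P$ with $\psi(P) = U$) to be a $P$ with $\kappa(P) \subseteq \kappa(U)$, which since $\kappa(P) \supseteq \kappa(U)$ again means equality must hold. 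So the real content is: the injection $\psi$ lets me choose, for each $U \in \UU$, a distinguished incident point realizing equality of residue fields, in a way that is globally consistent with a tree orientation. Concretely, I would argue as follows. Each $U \in \UU$ in $\Gamma_0$ is incident to $\deg(U)$ points of $\PP_0$; I must designate one of them as the parent of $U$. For $U \in \mathrm{im}(\psi)$ there is a forced choice; and I need to check that designating parents this way, together with picking as root a vertex not "pointed to," yields a valid rooted-tree orientation with no cycles.

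The cleanest way to make this rigorous is a counting/matching argument. Let $\PP_0$ have $p$ vertices, $\UU$ have $u$ vertices, and the tree $\Gamma_0$ have $p + u$ vertices and hence $p + u - 1$ edges. In a rooted orientation of a tree, every non-root vertex has exactly one parent edge, so orienting $\Gamma_0$ as a rooted tree is the same as choosing, for each edge, one endpoint as parent, subject to each vertex being the child of at most one edge and exactly one vertex (the root) being the child of none. The injection $\psi: \PP_0 \to \UU$ gives a partial matching of each $P$ to a child $\psi(P)$; I would extend this to a full valid parent-assignment. Since $\psi$ is injective, the $\UU$-vertices in its image each get their (unique) parent edge assigned pointing from the matched $P$; for the remaining $u - p$ vertices of $\UU$ and all $p$ vertices of $\PP_0$, I still have edges to orient. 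Here I would invoke that $\Gamma_0$ is a tree: root it at any $U_0 \notin \mathrm{im}(\psi)$ (such exists iff $u > p$; if $u = p$ a small separate argument is needed, but in a bipartite tree with parts of equal size one checks a leaf must lie in $\UU$ since... actually one must check the endpoints — I would dispose of this edge case by noting a leaf of $\Gamma_0$ is a $\UU$-vertex not in $\mathrm{im}(\psi)$ unless... I'd verify $u \ge p$ always holds when $\psi$ is injective into $\UU$, trivially, with equality an exceptional case handled directly). Then the standard rooted-tree structure assigns every vertex a unique parent; I must check this is compatible with the forced choices from $\psi$, which holds because $\psi(P)$ lies in the subtree hanging off the edge $\{P, \psi(P)\}$ away from the root — a fact I can arrange by choosing the root correctly, or more robustly by first contracting each matched edge $\{P,\psi(P)\}$ (legitimate since $\kappa(P)=\kappa(\psi(P))$, merging vertices with equal labels), rooting the smaller tree arbitrarily, and then un-contracting. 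After this, monotonicity is verified edge by edge: matched edges have equal labels; every other edge $\{U, P\}$ in the rooted tree has $P$ as a parent of $U$ only if $P \ne \psi^{-1}(U)$, but the condition we need there is $\kappa(P) \subseteq \kappa(U)$ which is automatic, OR has $U$ as parent of $P$, needing $\kappa(U) \subseteq \kappa(P)$ which is also automatic. So in fact once the matched edges are contracted, \emph{every} orientation of the remaining tree is monotonic, because each unmatched edge $\{U,P\}$ satisfies $\kappa(U) \subseteq \kappa(P)$ and hence is monotonic no matter which way it is oriented (parent $U$: need $\kappa(U)\subseteq\kappa(P)$, true; parent $P$: need $\kappa(P)\subseteq\kappa(U)$ — false in general!). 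This shows the contraction idea still needs care: the point is that after contracting matched pairs, $P$ must never be a leaf, so $P$ always has a child; but I still must ensure I never orient an unmatched edge with $P$ above $U$ when $\kappa(P) \supsetneq \kappa(U)$. The resolution: orient the contracted tree rooted at a $\UU$-vertex, which guarantees every $\PP_0$-vertex has its parent edge coming \emph{from a $\UU$-vertex} (going toward the root), hence that edge is monotonic, and every unmatched edge below a $P$ goes to a $\UU$-child, also monotonic. I expect assembling this last orientation argument cleanly — ensuring the root can always be chosen among $\UU$-vertices and that $\psi$'s matched edges end up "downward" — to be the main obstacle, and I would likely present it via the contraction-of-matched-edges device followed by rooting at a $\UU$-vertex of the contracted bipartite-ish tree.
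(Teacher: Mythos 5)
The forward direction you give is essentially the paper's argument (each $P\in\PP_0$ has degree exactly $2$ in $\Gamma_0$, hence in a monotonic rooting it has a unique parent and a unique child $U$, automatically satisfying $\kappa(P)=\kappa(U)$, and uniqueness of parents gives injectivity of $\psi$). The minor phrasing slip ``distinct vertices of a rooted tree have distinct parents'' should instead be ``a vertex has at most one parent,'' which is what your parenthetical actually uses.

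The converse direction, however, has a real gap, and you essentially flag it yourself. Two issues. First, the existence of a vertex $U_0\in\UU\setminus\mathrm{im}(\psi)$: injectivity only gives $|\UU|\ge|\PP_0|$, and you leave the equality case as ``handled directly''; but what is needed (and what the paper proves by induction, using that $\Gamma_0$ is a bipartite tree whose leaves all lie in $\UU$) is that $|\UU|=|\PP_0|+1$ exactly, so there is a \emph{unique} $U_0$ outside the image, and it is the forced root. Second and more seriously, the crux is to show that once one roots at $U_0$, the unique child of each $P$ is precisely $\psi(P)$ (equivalently, the parent of $P$ is its non-$\psi$ neighbor). You correctly recognize this is what must be established, but the proposed fix is wrong: in the sentence ``every unmatched edge below a $P$ goes to a $\UU$-child, also monotonic,'' that edge $\{P,U\}$ with $P$ the parent requires $\kappa(P)\subseteq\kappa(U)$, and since $\kappa(U)\subseteq\kappa(P)$ always holds, monotonicity demands $\kappa(P)=\kappa(U)$ --- which the hypothesis guarantees only when $U=\psi(P)$. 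Rooting at a $\UU$-vertex does not by itself make this happen; neither does contracting the matched edges and rooting the contracted tree, because un-contracting $[P\psi(P)]$ with $P$ above $\psi(P)$ is only coherent if the parent of $[P\psi(P)]$ is attached on the $P$-side, which is again exactly the statement you still need to prove. The paper closes this gap directly: it roots at the unique $U_0\notin\mathrm{im}(\psi)$ and then traces an arbitrary path $v_0=U_0,v_1,\ldots,v_n$, showing by induction (using injectivity and the fact that $v_0\notin\mathrm{im}(\psi)$) that $\psi(v_i)=v_{i+1}$ for every odd $i$, from which monotonicity along the path follows. Your write-up never produces that argument or any substitute for it, so as it stands the converse is not proved.
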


\begin{proof}
For the forward direction, note that any $P \in \PP_0$ lies on exactly two irreducible components of $X$.   So if $\Gamma$ is monotonic then the vertex $P$ has exactly one parent $U \in \UU$ and one child $U' \in \UU$ (i.e., $P$ is the parent of $U'$), with $P$ in the closure of each.  As noted above, $\kappa(P)=\kappa(U')$.  Thus we may define $\psi(P) = U'$.  This map is injective because a vertex cannot have more than one parent.

For the reverse direction, consider the subtree $\Gamma_0$ of $\Gamma$ defined above.
This is a bipartite tree with vertex sets $\PP_0$ and $\UU$, and
its terminal vertices all lie in $\UU$.  By induction one sees that such a bipartite tree has the property that $|\UU| = |\PP_0| + 1$.  It follows from injectivity that there is a unique element $U_0 \in \UU$ that is not in the image of $\psi$; and we will show that $\Gamma$ is monotonic with respect to the root $U_0$.

Take any path in $\Gamma$ beginning at $U_0$, with consecutive vertices $v_0,v_1,\dots, v_n$, where $v_0=U_0$.  Thus $v_i \in \UU$ for $i$ even, and $v_i \in \PP$ for $i$ odd.  For $i$ odd, the two elements of $\UU$ whose closures contain $v_i \in \PP$ are $v_{i-1}$ and $v_{i+1}$.  But $v_0=U_0$ is not in the image of $\psi$.  So $\psi(v_1)=v_2$.  Since $\psi$ is injective, it follows by induction that $\psi(v_i) = v_{i+1}$ for all odd $i$.  Thus $\kappa(v_i) = \kappa(v_{i+1})$ for $i$ odd.  Also, $\kappa(v_i) \subseteq \kappa(v_{i+1})$ for $i$ even, since in that case $v_{i+1} \in \PP$ is in the closure of $v_i \in \UU$.  Thus the monotonicity condition holds for every pair of adjacent vertices in this path.  Since every pair of adjacent vertices in $\Gamma$ lies in some path that begins at $U_0$, the result follows.
\end{proof}

The following proposition relates the notion of a monotonic tree to the tree property of the reduction graph for the curve and its base change to finite field extensions of the residue field.
By Zariski's connectedness theorem, $X$ is geometrically connected;  i.e., $X_{k'}$ is connected for any finite field extension $k'/k$ (\cite[Corollaire~III.4.3.12]{EGA3}).  Given a finite nonempty subset $\PP$ of $X^{\operatorname{red}}$, let $\PP_{k'}$ denote the subset of $X_{k'}$ consisting of the points that lie over points of $\PP$. The graph $\Gamma'$ associated to $X_{k'}$ and $\PP_{k'}$ is defined in the natural way, as for $X$ and $\PP$. If $\Gamma'$ is a tree then so is the original reduction graph $\Gamma$, because the inverse image of any loop in $\Gamma$ would
contain a loop in $\Gamma'$.
The converse is more subtle, as discussed in the following proposition (see also Example~\ref{geom not monotonic ex} below).

\begin{prop} \label{monotonic geometric}
Let $R$ be a complete discrete valuation ring with fraction field $K$ and residue field $k$, and let $\XX$ be a normal crossings model for a semi-global field $F$ over $K$.
Let $X$ be the closed fiber of $\XX$.  Let $\Gamma$ be the reduction graph associated to $X^{\operatorname{red}}$ and a finite set $\PP \subset X^{\operatorname{red}}$ as above.
\renewcommand{\theenumi}{\alph{enumi}}
\renewcommand{\labelenumi}{(\alph{enumi})}
\begin{enumerate}
\item \label{mono implies geom}
If $\Gamma$ is a monotonic tree then for every finite field extension $k'/k$, the graph associated to $X_{k'}$ and $\PP_{k'}$ is a tree.
\item \label{geom implies mono}
In part~(\ref{mono implies geom}), suppose that each of the fields $\kappa(P)$ is a separable extension of $k$ (e.g.\ if $k$ is of characteristic zero, or more generally perfect).  Then the converse also holds: if the graph associated to $X_{k'}$  and $\PP_{k'}$ is a tree for every finite field extension $k'/k$, then $\Gamma$ is a monotonic tree.
\end{enumerate}
\end{prop}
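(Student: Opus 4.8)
The strategy is to analyze how the reduction graph behaves under base change $k'/k$, using the bipartite subgraph $\Gamma_0$ (with vertex sets $\PP_0$ and $\UU$) and the combinatorial characterization of monotonicity via the injection $\psi:\PP_0\to\UU$ with $\kappa(P)=\kappa(\psi(P))$ from the previous proposition. The key observation is that the number of points of $X_{k'}$ lying over a given point $\xi$ of $X$ (or connected components of $\bar U_{k'}$ lying over $\bar U$) is controlled by $[\kappa(\xi)\cap k' : k]$, or more precisely by how the residue field $\kappa(\xi)$ splits over $k'$; under the separability hypothesis of part~(\ref{geom implies mono}) this is governed by $\Hom_k(\kappa(\xi),k')$ once $k'$ is large enough (e.g. contains a normal closure of all the $\kappa(P)$). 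I would work throughout with such a ``large'' $k'$, Galois over $k$, and track the $\Gal(k'/k)$-action on the vertices and edges of $\Gamma'$, since $\Gamma' / \Gal(k'/k) = \Gamma$ as graphs.

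For part~(\ref{mono implies geom}): given that $\Gamma$ is a monotonic tree with root $v_0$, I would show $\Gamma'$ is connected (this is automatic from geometric connectedness of $X$, cited in the paragraph before the proposition, since the components and their intersection points stay connected) and then count: a connected graph is a tree iff $\#\text{edges} = \#\text{vertices} - 1$. The edges of $\Gamma'$ over a branch $(U,P)$ correspond to pairs (component of $\bar U_{k'}$ over $U$, point of $P_{k'}$ over $P$) lying on a common such component; using the injection $\psi$ and the equality $\kappa(P)=\kappa(\psi(P))$, each point of $(\PP_0)_{k'}$ over $P$ lies on exactly one component over $\psi(P)$ — so the splitting of $P$ is ``the same'' as the splitting of $\psi(P)$, and $\psi$ lifts to an injection $(\PP_0)_{k'}\to\UU_{k'}$ with the same closure/residue-field properties. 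Combined with the one extra component $U_0$ over $U_0$ (still possibly split, but the count balances since each fiber of $\psi$-lifted vertices is equinumerous), an Euler-characteristic bookkeeping gives $\#E(\Gamma') = \#V(\Gamma') - 1$. I would organize this as: $\chi(\Gamma') = \chi(\Gamma_0') $ plus corrections from $\PP\setminus\PP_0$ that are the same on both sides, and $\chi(\Gamma_0') = 1$ by the lifted-$\psi$ argument.

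For part~(\ref{geom implies mono}): I would argue the contrapositive together with the separability hypothesis. If $\Gamma$ is a tree but \emph{not} monotonic, then by the previous proposition there is \emph{no} injection $\psi:\PP_0\to\UU$ with $P\in\overline{\psi(P)}$ and $\kappa(P)=\kappa(\psi(P))$; since $|\UU| = |\PP_0|+1$ for the bipartite tree $\Gamma_0$, a Hall's-marriage-theorem argument produces a point $P\in\PP_0$ both of whose adjacent components $U,U'$ satisfy $\kappa(U)\subsetneq\kappa(P)$ and $\kappa(U')\subsetneq\kappa(P)$ (a ``local obstruction''). Now choose $k'/k$ finite separable containing a $k$-embedding of $\kappa(P)$ but chosen so that $\kappa(U)$ and $\kappa(U')$ each split \emph{more} than $\kappa(P)$ does — concretely, take $k'$ to be (a subfield of) the normal closure arranged so that $P$ stays a single point over $k'$ while $U$ and $U'$ each acquire $\geq 2$ components over $k'$ through that one point; then that point of $\Gamma'$ has degree $\geq 4$ while the tree count forces... more carefully: I would show directly that $\chi(\Gamma_0') \le 0$, i.e. a loop appears, by the edge/vertex count, because the two ``over-split'' components contribute edges faster than vertices at that point. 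The separability hypothesis is exactly what makes these splitting numbers $= [\kappa(\xi):k]$-type quantities that can be controlled independently.

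\textbf{Main obstacle.} The delicate point is the bookkeeping in part~(\ref{geom implies mono}): turning the purely combinatorial ``no good $\psi$ exists'' (a Hall-type statement about the bipartite tree $\Gamma_0$) into the existence of a \emph{single} finite separable $k'$ that simultaneously keeps some $P$ rigid while splitting its two neighboring components, and then verifying that this forces a cycle in $\Gamma'$ via an Euler-characteristic inequality — as opposed to a cycle appearing elsewhere and accidentally cancelling. I expect the separability hypothesis and the fact that $\Gamma'\to\Gamma$ is a quotient by a finite group action (so components/points of $\Gamma'$ over a fixed vertex of $\Gamma$ are permuted transitively, hence equinumerous in a controlled way) to be the tools that make the inequality go through; managing the $\PP\setminus\PP_0$ vertices (which lie on only one component and contribute equally on both sides, hence can be deleted via the remark that $\Gamma_0$ is a tree/monotonic iff $\Gamma$ is) keeps the count clean.
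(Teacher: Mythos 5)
Your part~(\ref{mono implies geom}) is a correct argument, and it is genuinely different from the one in the paper. The paper shows $\Gamma'$ is a tree by proving directly that there is a unique path in $\Gamma'$ from a fixed lift $v_0'$ of the root to any vertex (via a uniqueness-of-lifts observation), and then that deleting any edge disconnects $\Gamma'$. You instead count: since each point $P'$ of $\PP_{k'}$ over $P$ lies on exactly one component of $\bar U_{k'}$ over each adjacent $U$, one has $\#E(\Gamma_0') = 2\sum_{P\in\PP_0} n(P)$ and $\#V(\Gamma_0') = \sum_{P\in\PP_0} n(P) + \sum_U n(U)$, hence $\chi(\Gamma_0') = \sum_U n(U) - \sum_P n(P)$; the injection $\psi$ with $\kappa(P)=\kappa(\psi(P))$ (so $n(P)=n(\psi(P))$) collapses this to $\chi(\Gamma_0') = n(U_0)$, and since geometric connectedness forces $\chi \le 1$ one gets $n(U_0)=1$ and $\chi=1$, i.e.\ a tree. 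The Euler-characteristic argument is shorter; the paper's path argument has the merit of working purely at the level of the graphs without any arithmetic input. Either is fine for part~(\ref{mono implies geom}).

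Your part~(\ref{geom implies mono}), however, has two concrete errors, and it also departs substantially from the paper's method. First, the combinatorial reduction is wrong: the failure of Hall's condition for the ``good'' bipartite subgraph (edges $(U,P)$ with $\kappa(U)=\kappa(P)$) does \emph{not} imply the existence of a single $P\in\PP_0$ both of whose neighbors $U,U'$ satisfy $\kappa(U),\kappa(U')\subsetneq\kappa(P)$. For example, take a chain $U_2 - P_1 - U_1 - P_2 - U_3$ with $\kappa(U_2)=\kappa(U_3)=k \subsetneq \kappa(U_1)=\kappa(P_1)=\kappa(P_2)$; no injection $\psi$ exists (both $P_i$ are forced onto $U_1$), the tree is not monotonic, yet every $P_i$ has a neighbor with matching residue field. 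Second, the direction of splitting is reversed: since $\kappa(U)\hookrightarrow\kappa(P)$ whenever $P\in\bar U$, one always has $n_{k'}(U)\le n_{k'}(P)$, so it is impossible to ``keep $P$ a single point while splitting $U$ into $\ge 2$ components through it.'' What actually produces a cycle is the opposite phenomenon: $P$ splits into more points than either adjacent component has lifts, so those extra points over $P$ get piled onto the few available components. (Your own claim that a point over $P$ lies on only one component over $U$ — used correctly in your part~(\ref{mono implies geom}) — already contradicts the degree-$\ge 4$ picture.) As a result your proposed argument for part~(\ref{geom implies mono}) does not go through as stated, and turning the non-existence of $\psi$ directly into $\chi(\Gamma_0')\le 0$ over a Galois splitting field would require a more serious combinatorial argument that you have not supplied. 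The paper takes a different and cleaner route: it takes $k'$ Galois containing all $\kappa(P)$, notes that $\Gal(k'/k)$ acts on the (assumed) tree $\Gamma'$ without inversions (bipartiteness), invokes Serre's fixed-point theorem to obtain a vertex $v_0$ of $\Gamma$ with $\kappa(v_0)=k$, and then shows by a path-lifting/Galois-conjugation argument that monotonicity fails at some edge would produce two distinct paths in $\Gamma'$ with the same endpoints, contradicting the tree hypothesis.
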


\begin{proof}
For part (\ref{mono implies geom}), let $k'/k$ be a finite field extension, and consider the graph $\Gamma'$ associated to the base change $X_{k'}$ of $X$ and $\PP_{k'}$ as introduced above. Let $\pi:\Gamma' \to \Gamma$ be the natural map. Since $X_{k'}$ is connected, so is the graph $\Gamma'$.

As before, let $\UU$ be the set of connected components of the complement of $\PP$ in $X^{\operatorname{red}}$.  If
$U \in \UU$, then $\bar U_{k'} = \pi^{-1}(\bar U)$ consists of a disjoint union of copies of an irreducible $k'$-curve.  Thus a point of $X_{k'}$ can lie on at most one irreducible component of $\bar U_{k'}$.  Moreover, if $P \in \PP$ lies in the closure of $U \in \UU$, with $\kappa(P) = \kappa(U)$, and if $U'$ is a component of $\bar U_{k'}$, then there is exactly one point of $\PP_{k'}$ over $P$ that lies on $U'$.  Together, these observations show that if $v,w$ are adjacent vertices of $\Gamma$, with $v$ the parent of $w$, and if $w'$ is a vertex of $\Gamma'$ lying over $w$, then there is a unique vertex $v'$ of $\Gamma'$ that is adjacent to $w'$ and that lies over $v$.

Pick a vertex $v_0'$ of $\Gamma'$ over the root $v_0$ of $\Gamma$; i.e., $\pi(v_0')=v_0$.
Now consider any path $v_0',v_1',\dots,v_n'$ in $\Gamma'$ starting at $v_0'$.  (As before, {\em paths} are assumed to have no repeated edges.)  Let $v_i = \pi(v_i')$.  Then $v_i$ is the parent of $v_{i+1}$ for all $i < n$, since otherwise $v_{i+1}$ would be the parent of $v_i$ which inductively would give $v_{i+1}=v_{i-1}$, contradicting the uniqueness assertion in the previous paragraph.  Thus for every path in $\Gamma'$ from $v_0'$ to a vertex $v'$, its image in $\Gamma$ is a path such that each vertex is the parent of its successor (this being the unique path connecting $v_0$ to $\pi(v')$).  By the uniqueness assertion of the previous paragraph, for each vertex $v'$ of $\Gamma'$, there is a unique path from $v_0'$ to $v'$ in $\Gamma'$, and it lies over the unique path from $v_0$ to $\pi(v')$ in $\Gamma$.

Now consider any edge $e'$ in $\Gamma'$, with vertices $v',w'$.  Let $e,v,w$ be their images in~$\Gamma$.  We may assume that $v$ is the parent of $w$.  So the unique path from $v_0$ to $w$ ends with the edge $e$ from $v$ to $w$; and concatenating $e'$ with the unique path from $v_0'$ to $v'$ yields the unique path from $v_0'$ to $w$.  Hence if $e'$ is deleted from $\Gamma'$, then there is no path connecting $v_0$ to $w'$.  Thus if any edge is deleted from $\Gamma'$, the resulting graph is disconnected.  This implies that the connected graph $\Gamma'$ is a tree.

For part (\ref{geom implies mono}), let $k'$ be a finite Galois extension of $k$ that contains all the fields $\kappa(P)$ for $P \in \PP$.  Thus $k'$ also contains the fields $\kappa(U)$ for $U \in \UU$, since $\kappa(U) \subseteq \kappa(P)$ if $P \in \bar U$.  The graph $\Gamma'$ associated to $X_{k'}$ is a tree, acted upon by $\Gal(k'/k)$, and with the property that $\kappa(v') = k'$ for every vertex $v'$ of $\Gamma'$.  Since $\Gamma'$ is bipartite, no two adjacent vertices can be interchanged by an element of $\Gal(k'/k)$.  By
\cite[Theorem~I.6.1.15]{Serre}, it follows that there is a vertex that is fixed by $\Gal(k'/k)$.  The image of this vertex under $\pi:X_{k'} \to X$ is a vertex $v_0 \in \PP \cup \UU$ of $\Gamma$ with $\kappa(v_0)=k$.
To prove part~(\ref{geom implies mono}), we will show that $\Gamma$ is monotonic with root $v_0$.  For this, it suffices to show that if $v_0,v_1,\dots,v_n$ are consecutive vertices of a path in $\Gamma$ (with $v_0$ as above), then $\kappa(v_{n-1}) \subseteq \kappa(v_n)$.  If this does not hold, then $\kappa(v_n)$ is strictly contained in $\kappa(v_{n-1})$, with $v_{n-1} \in \PP$ and $v_n \in \UU$, since $\kappa(U) \subseteq \kappa(P)$ for $P \in \PP$ in the closure of $U \in \UU$.  Pick a path $v_0',v_1',\dots,v_n'$ in $\Gamma'$ with $v_i'$ lying over $v_i$; this exists because if $P \in \PP$ lies in the closure of $U \in \UU$, then every point of $\PP_{k'}$ over $P$ lies in the closure of some element of $\UU_{k'}$ over $U$ (i.e., an irreducible component of
$\pi^{-1}(U)$) and vice versa.  Let $\sigma$ be a non-trivial element of $\Gal(k'/\kappa(v_n))$ that does not lie in the proper subgroup $\Gal(k'/\kappa(v_{n-1}))$.  Then $\sigma$ fixes $v_0$ and $v_n$ but does not fix $v_{n-1}$; and so it carries the above path to a different path in $\Gamma'$ with the same endpoints.  The concatenation of one of these paths with the inverse of the other contains a loop in $\Gamma'$, and this is a contradiction.
\end{proof}

The following example shows that if $k'/k$ is a finite field extension, then the graph
$\Gamma'$ associated to the base change $X_{k'}$ need not be a tree even if $\Gamma$ is, when $\Gamma$ is not monotonic.
The example also shows that the hypothesis in Proposition~\ref{monotonic geometric}(\ref{geom implies mono}) is necessary.  That is, it is
possible in non-zero characteristic for a tree not to be monotonic, even if the graph associated to every base change of the closed fiber is a tree.

\begin{example}\label{geom not monotonic ex}
Let $k$ be a field, and let $R = k[[t]]$, with fraction field $K=k((t))$.  Let $a$ be a non-square in $k$
and let $\XX = \operatorname{Proj}(R[x,y,z]/((y-x)(xy - az^2) + tz^3))$, with function field $F$.  Then $\XX$ is a normal crossings model of $F$, whose closed fiber $X$ consists of two irreducible components $C_1,C_2$, each isomorphic to $\P^1_k$, with $\kappa(C_i) = k$, and intersecting at a single point $P$ with residue field $k' := \kappa(P) = k(\sqrt a)$.  Thus the associated reduction graph $\Gamma$ is a tree, but it is not monotonic.  If $\operatorname{char}(k) \ne 2$, then the graph $\Gamma'$ associated to the base change $X_{k'}$ is not a tree,
since $P$ splits into two points over $k'$.  But if $\operatorname{char}(k) = 2$, then every base change again gives a tree since $P$ remains a single point, despite the graph not being monotonic.
\end{example}

%-------------------------------------------------------------------------------------------------------------------------------------------------------------------------------------------------------------------------------------------------------------------------
\section{Examples}\label{exex}
%-------------------------------------------------------------------------------------------------------------------------------------------------------------------------------------------------------------------------------------------------------------------------

As before, $K$ is a complete discretely valued field with valuation ring $R$.

In this section, we produce examples of semi-global fields $F$ over $K$ and tori $T$ for which $\Sha(F,T)$ is nontrivial or even infinite (Example~\ref{infinite}).
By \cite[Theorem~4.2]{HHK3}, $\Sha_{\PP}(F,T)$ is trivial when $T$ is a rational $F$-torus. At the end of the section, we give an example
of an $R$-torus for which $T\times_{R}K$ is not $K$-rational,
the reduction graph has loops, and $\Sha(F, T)=0$ (Example~\ref{loop trivial Sha}).

A major ingredient in constructing examples where $\Sha(F,T)\neq 0$ will
be Proposition~\ref{coefficient-relation} and its consequences,
Theorems~\ref{Sha via coef sys} and~\ref{shaP1}.  In order to apply
those results, we will want to find examples of fields $k$ and flasque
$k$-tori $S$ for which $H^1(k,S)$ is nontrivial, or such that
$H^1(k',S)$ is strictly larger than $H^1(k,S)$ for some finite extension
$k'/k$.

%-------------------------------------------------------------------------------------------------------------------------------------------------------------------------------------------------------------------------------------------------------------------------
\subsection{Nontrivial $H^1(k,S)$}
%-------------------------------------------------------------------------------------------------------------------------------------------------------------------------------------------------------------------------------------------------------------------------

Let $k$ be a field and let $L/k$ be a finite Galois extension with
Galois group $G$.
Let $T=R^1_{L/k}\G_{m}$ be the $k$-torus defined by the equation ${\rm
Norm}_{L/k}(\xi)=1$.  In the proof of \cite[Prop. 15, p. 206]{CTS77}, an explicit flasque resolution $1 \to S \to Q \to T \to 1$ is constructed. It induces an isomorphism between $H^1(k,S)=T(k)/{\rm R}$ and
$\wh{H}^{-1}(G,L^\times)$. The latter is the quotient
${}^{N}L^\times/I_{G}L^\times$ of the group ${}^{N}L^\times$ of norm 1
elements in $L^\times$
by the subgroup $I_{G}L^\times$ of $L^\times$ generated by elements of the form
$\sigma(x)/x$, for $\sigma\in G$ and $x\in L^\times$ (see
\cite[Section~I.2]{NSW}).
If $L/k$ is a biquadratic Galois extension, one checks that $H^1(k,S)=
\wh{H}^{-1}(G,L^\times)$ is annihilated by 2.

 \begin{example}\label{anconic}
 Let $k$ be a field of characteristic not equal to 2 and $L = k(\sqrt{a}, \sqrt{b})$, for $a,b\in k$.
 Suppose that $[L : k] = 4$. Let $\sigma, \tau$ be generators of the Galois group so that $\sigma$ fixes $\sqrt{b}$ and $\tau$ fixes $\sqrt{a}$. Suppose moreover that the quadratic form $\langle 1, a, -b\rangle=x^2+ay^2-bz^2$ is anisotropic over $k$.
 Let $T= R^1_{L/k}({\mathbb G}_m)$. We claim that if $\sqrt{-1} \in k$, the class of $\sqrt{-1}$ in $ {}^{N}L^\times/I_{G}L^\times$
 is nontrivial. Indeed, first notice that $\operatorname{N}_{L/k}(\sqrt{-1})=1$, so $\sqrt{-1}\in  {}^{N}L^\times$. Now suppose for contradiction that $\sqrt{-1}\in I_{G}L^\times$. Since $\frac{\tau\sigma(z)}{z}=\frac{\sigma (z)}{z}\frac{\tau(\sigma (z))}{\sigma(z)}$ for all $z\in L^\times$, $I_{G}L^\times=\{\frac{\sigma(x)}{x}\frac{\tau(y)}{y}|\; x,y \in L^\times\}$. So by assumption, $\sqrt{-1}=\frac{\sigma(x)}{x}\frac{\tau(y)}{y}$ for some $x,y \in L^\times$. The element $\theta:=\frac{\sigma(x)}{x}$ satisfies $\operatorname{N}_{L/k(\sqrt{b})}(\theta)=1$; and $\operatorname{N}_{L/k(\sqrt{a})}(\theta)=\operatorname{N}_{L/k(\sqrt{a})}(\sqrt{-1}\frac{y}{\tau(y)})=-1$. After writing $\theta$ as $\theta =z_1+z_2\sqrt{a}+z_3\sqrt{b}+z_4\sqrt{ab}$ with coefficients $z_i\in k$, these equalities on the norms give a system of equations which implies $z_1^2-abz_4^2=0$, and hence $z_1=z_4=0$ (since $ab$ is not a square in $k$). Again using the system of equations, this in turn implies $bz_3^2-az_2^2=1$.  But the latter is impossible since  $\langle 1, a, -b\rangle$ is anisotropic over $k$.
 \end{example}

  \begin{example}\label{csv}
  Let $L=\Q(\sqrt{-1},\sqrt{2})$.
Let $T=R^{1}_{L/\Q}({\mathbb G}_{m})$, and let $1\rightarrow S\rightarrow Q\rightarrow T\rightarrow 1$ be a flasque resolution.
Let  $p_{i}, i \in \N$, be the infinite list of primes  congruent to $1$ modulo 8.
For any $n$, let $k_{n}: =\Q(\sqrt{p_{1}}, \dots, \sqrt{p_{n}})$. We have obvious
embeddings $k_{n}  \subset k_{n+1}$.
Each $p_{i}$ is a square in the 2-adic field $\Q_{2}$, hence
there are compatible embeddings $k_{n} \subset \Q_{2}$.
 Let $k_\infty=\cup_{n=1}^{\infty} k_{n}$. Using class field theory, one shows that $H^1(\Q, S) = 0$,
that each of the groups $H^1(k_{n},S)=T(k_n)/{\rm R}$
is finite of order $2^{2^n-1}$  (\cite[Cor. 2 p. 207]{CTS77}), 
 and that the natural maps
$H^1(k_{n},S) \to H^1(k_{n+1},S)$ are injective (\cite[Appendix, \S 2, Teorema 18, p. 206]{Vosk}).
Hence $H^1(k_\infty,S)$ is infinite.
  \end{example}

We also give an example of an infinite first cohomology group of a flasque torus that does not rely on algebraic number theory.

 \begin{prop}\label{genrericinfinite}
Let $1 \to S \to Q \to T \to 1$ be a flasque resolution of a $k$-torus $T$.
 Let $k(T)$ be the function field of $T$.
If $H^1(k,S) \neq 0$, then the natural  map $H^1(k,S) \to H^1(k(T),S)$
is injective but not surjective.
\end{prop}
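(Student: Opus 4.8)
The plan is to analyze the map $H^1(k,S) \to H^1(k(T),S)$ via the geometric meaning of $H^1(-,S)$ in terms of $\mathrm R$-equivalence, using the exact sequence $Q(\cdot) \to T(\cdot) \to H^1(\cdot,S)\to 0$ that comes from the flasque resolution (recall $H^1$ of a quasitrivial torus vanishes over any field). Under the identification $T(L)/\mathrm R \simeq H^1(L,S)$, the restriction map $H^1(k,S)\to H^1(k(T),S)$ is the map induced on $\mathrm R$-equivalence classes by the inclusion $T(k)\hookrightarrow T(k(T))$, where we view a $k$-point of $T$ as a $k(T)$-point via the constant section. So the two assertions become: (i) distinct $\mathrm R$-equivalence classes of $k$-points of $T$ stay distinct as $k(T)$-points, and (ii) the generic point $\eta \in T(k(T))$ (the canonical point coming from the identity morphism $T\to T$) is \emph{not} $\mathrm R$-equivalent over $k(T)$ to any constant $k$-point; the latter will produce a class in $H^1(k(T),S)$ outside the image of $H^1(k,S)$ provided $H^1(k,S)\neq 0$. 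Actually for non-surjectivity it is cleanest to argue that the generic point $\eta$ maps to a nonzero element of $H^1(k(T),S)$ which does not lie in the image: I will show the cokernel of $H^1(k,S)\to H^1(k(T),S)$ is nontrivial by exhibiting an explicit obstruction.

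For injectivity: a regular point of a torus is a smooth $k$-variety, and the key input is that $\mathrm R$-equivalence on $T(k)$ is detected by specialization from $k$-rational maps out of open subsets of $\A^1_k$. If two $k$-points $P_0,P_1$ become $\mathrm R$-equivalent over $k(T)$, there is a finite chain of elementary links over $k(T)$, i.e. $k(T)$-morphisms from open subsets of $\A^1_{k(T)}$ joining them; spreading out, these are defined over a nonempty open $V\subseteq T$, and then specializing at any $k$-point of $V$ (using $T(k)$ dense, which holds since $T$ has a $k$-point and is a smooth connected group, or at worst passing to a finite extension and using a corestriction/norm argument) yields a chain of links over $k$, so $P_0 \sim_{\mathrm R} P_1$ over $k$. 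This gives injectivity of $H^1(k,S)\to H^1(k(T),S)$. The mild technical point here is ensuring there are enough $k$-points of $T$ at which to specialize; if $k$ is finite one can handle the torus directly (Lang's theorem makes $H^1(k,S)=0$ anyway, so there is nothing to prove), and otherwise $T(k)$ is Zariski dense.

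For non-surjectivity, the plan is: consider the generic point $\eta\in T(k(T))$ and its class $[\eta]\in T(k(T))/\mathrm R = H^1(k(T),S)$. Suppose toward a contradiction that $H^1(k(T),S)$ were generated by the image of $H^1(k,S)$ together with nothing else, i.e. that the restriction is surjective; equivalently every $k(T)$-point of $T$ is $\mathrm R$-equivalent to a constant point. Applying this to $\eta$ and spreading out the chain of elementary links realizing $[\eta]=[P]$ for some $P\in T(k)$, we again get a nonempty open $V\subseteq T$ over which the identity section and the constant section $P$ are $\mathrm R$-equivalent fiberwise; specializing at two \emph{different} $k$-points $x,y\in V(k)$ representing distinct $\mathrm R$-classes in $T(k)/\mathrm R = H^1(k,S)$ (possible since $H^1(k,S)\neq 0$) would force $x\sim_{\mathrm R} P \sim_{\mathrm R} y$ over $k$, contradicting that $x,y$ are in distinct classes. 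Here I must be slightly careful: the chain of links realizing $[\eta]=[P]$ might involve the special point $P$ itself or the bad locus, so one should instead say that $\eta$ is $\mathrm R$-equivalent to \emph{some} constant point, call it $P$; then the fiberwise statement over $V$ is that the generic-fiber diagonal section is $\mathrm R$-linked to the constant section $P$; specializing at $x\in V(k)$ gives $x\sim_{\mathrm R}P$ in $T(k)/\mathrm R$ for \emph{every} $x$ in a dense open, hence $T(k)/\mathrm R$ has a single class meeting that open, but a group quotient in which a dense open maps to one element forces the whole quotient to be trivial — contradicting $H^1(k,S)\neq 0$.

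The main obstacle I anticipate is the spreading-out/specialization step: making precise that an $\mathrm R$-equivalence over the function field $k(T)$ spreads to an ``$\mathrm R$-equivalence over a nonempty open subscheme'' and then specializes correctly at $k$-points, including keeping track of the open subsets of $\A^1$ (which may shrink) and of which point the chain passes through. This is a standard constructibility argument, and I expect the authors to phrase it cleanly using that $T(k)$ is dense and that elementary links spread out over a nonempty open; I would cite the $\mathrm R$-equivalence machinery from \cite{CTS87} for the relevant lemmas rather than redo it.
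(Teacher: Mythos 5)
Your overall strategy is correct and is essentially the one in the paper, but the paper's execution avoids the spreading-out machinery you (rightly) flag as the technical burden. For injectivity, the paper uses a clean specialization argument: $\OO_{T,1}$ is a regular local $k$-algebra with residue field $k$ and fraction field $k(T)$, so $H^1(k,S)\to H^1(\OO_{T,1},S)$ has a retraction (specialization at the identity point) and is therefore injective, while $H^1(\OO_{T,1},S)\to H^1(k(T),S)$ is injective for tori over a regular local ring; the composite is the restriction map in question. No Zariski density of $T(k)$ and no chains of elementary links are needed. For non-surjectivity, rather than spreading out a chain realizing $\eta\sim_{\rm R} P$, the paper works with the class $\xi\in H^1(T,S)$ of the $S$-torsor $Q\to T$. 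For each $P\in T(k)$ one has $\delta(P)=\xi(P)$, where $\delta:T(k)\to H^1(k,S)$ is the (surjective) connecting map; $\xi(P)$ is computed by restricting $\xi$ to $\OO_{T,P}$ and specializing. Flasqueness makes $H^1(\OO_{T,P},S)\to H^1(k(T),S)$ an isomorphism, so if $H^1(k,S)\to H^1(k(T),S)$ were onto, $\xi|_{k(T)}$ would be the image of a fixed $\xi_0\in H^1(k,S)$, whose specialization at any $P$ is again $\xi_0$; hence $\delta$ is constant, contradicting its surjectivity when $H^1(k,S)\neq 0$. This is exactly the content of your argument (your $[\eta]$ is $\xi|_{k(T)}$, and your ``a dense open maps to one class forces triviality'' is replaced by the cleaner ``$\delta$ is constant yet onto''), but keeping everything at the level of cohomology classes lets flasqueness do the work of your constructibility/specialization step, and also removes the need for a separate finite-field case.
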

\begin{proof}
For any integral $k$-variety with a smooth $k$-point,  with function field $k(X)$,
and any $k$-torus $R$,
the natural map $H^1(k,R) \to H^1(k(X),R)$ is injective, by an easy specialization argument.
The flasque resolution of $T$
 defines a torsor over $T$ under $S$, hence defines a class $\xi \in H^1(T,S)$.
We have the map $\delta : T(k) \to H^1(k,S)$ attached to the above exact sequence.
For any point $P \in T(k)$, we have the equality $\delta (P)= \xi(P) \in H^1(k,S)$.
One may also compute $\delta (P)$ by restricting the class $\xi$ to the local ring at $P$,
then using the map $H^1(\OO_{T,P},S) \to H^1(k,S)$.
Let $P \in T(k)$. We have maps
$$ H^1(k,S) \to H^1(T,S) \to H^1(\OO_{T,P},S) \to H^1(k(T),S).$$
Because $S$ is flasque, the map $H^1(\OO_{T,P},S) \to H^1(k(T),S)$
is an isomorphism.
If  the map $H^1(k,S) \to H^1(k(T),S)$ is onto, then
all maps in
$H^1(k,S) \to H^1(\OO_{T,P},S) \to H^1(k(T),S)$
are isomorphisms. Thus there exists a fixed $\xi_{0} \in H^1(k,S)$
whose image is $\xi \in H^1(k(T),S)$ and we get
$\xi(P)=\xi_{0} \in H^1(k,S)$, hence $\delta (P)=\xi_{0}$.
If $H^1(k,S) \neq 0$, since the map $T(k) \to H^1(k,S)$ is onto,
there exist $k$-points $P$ and $Q$ such that $\delta(P) \neq \delta(Q) \in H^1(k,S)$.
Thus  $H^1(k,S)\neq 0$ implies that the natural map $H^1(k,S) \to H^1(k(T),S)$
is not onto.
\end{proof}

\begin{example}\label{algebraic-infinite}
Let $k$ be a field and let $S$ be a flasque torus over $k$ with $H^1(k,S) \neq 0$.
One may then produce an exact sequence of tori $1 \to S \to Q \to T \to 1$ with $Q$ quasitrivial, by letting $\hat T$ be the kernel of an equivariant surjection from a permutation module to the character group $\hat S$.  Using Proposition \ref{genrericinfinite},  passing from $k$ to $k(T)$ and iterating the process (with the same exact sequence), we produce a field $E$ of infinite transcendence degree over $k$ such that $H^1(E,S)$ is infinite. \end{example}

\begin{example}\label{charnot2}
 Let $k = {\mathbb R}((x))((y))$ and $L = k(\sqrt{x}, \sqrt{y})$.
  Let $T = R^1_{L/k}({\mathbb G}_m)$, and let $1 \to S \to Q \to T \to 1$ be a flasque resolution of~$T$. Let $k' = {\mathbb C}((x))((y))$.
Then  one can show that $H^1(k, S) = 0$ and $H^1(k', S) \neq 0$. In particular the map $H^1(k,S)  \to  H^1(k',S)$   is not onto.
\end{example}

\begin{prop}
 \label{char2}
Let $k$ be a local  field of characteristic equal to 2.  Then $k = \F((s))$ for some finite field $\F$ with char$(\F) = 2$.
Let $k' = k(\sqrt{s})$.
Let $L/k$ be the biquadratic Galois extension of $k$ with Galois group $G=\Z/2 \times \Z/2$, and let $T = R^1_{L/k}{\mathbb G}_m$.
 Let $1 \to S \to Q \to T \to 1$ be a flasque resolution of~$T$.
 Then the map $H^1(k,S) \to H^1(k',S)$ is not onto.
\end{prop}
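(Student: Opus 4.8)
The plan is to translate the statement into Tate cohomology via the explicit flasque resolution recalled above, and then to evaluate the resulting map using Tate--Nakayama duality. First I would fix the identifications. Since $L/k$ is separable and $k'/k$ is purely inseparable, $L':=L\otimes_k k'$ is a field, a biquadratic Galois extension of $k'$ with $\Gal(L'/k')$ identified with $G$ by restriction of automorphisms; moreover $T\times_k k'\isom R^1_{L'/k'}\G_m$ and $\Norm_{L'/k'}$ restricts to $\Norm_{L/k}$ on $L^\times$, so the inclusion $\phi\colon L^\times\hookrightarrow L'^\times$ is $G$-equivariant and carries norm-one elements to norm-one elements. As $S$ is well defined up to a quasitrivial factor, I may work with the explicit resolution of \cite[Prop.~15, p.~206]{CTS77}, for which $H^1(k,S)=T(k)/\mathrm{R}$ is identified with $\wh H^{-1}(G,L^\times)={}^{N}L^\times/I_GL^\times$; since $T(\cdot)/\mathrm{R}$ is intrinsic, this identification is compatible with passing from $k$ to $k'$, so $H^1(k,S)\to H^1(k',S)$ is identified with the map $\phi_*\colon\wh H^{-1}(G,L^\times)\to\wh H^{-1}(G,L'^\times)$ induced by $\phi$.

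Next I would invoke Tate--Nakayama duality for the nonarchimedean local fields $k=\F((s))$ and $k'\isom\F((u))$ with $u^2=s$. Cup product with the fundamental classes $u_{L/k}\in H^2(G,L^\times)$ and $u_{L'/k'}\in H^2(G,L'^\times)$ yields isomorphisms
\[\wh H^{-3}(G,\Z)=H_2(G,\Z)\xrightarrow{\sim}\wh H^{-1}(G,L^\times),\qquad H_2(G,\Z)\xrightarrow{\sim}\wh H^{-1}(G,L'^\times).\]
For $G\isom\Z/2\times\Z/2$ we have $H_2(G,\Z)\isom\Z/2$; in particular $H^1(k',S)\isom\Z/2$ is nonzero. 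By functoriality of the cup product in the coefficient module, $\phi_*$ transported through these isomorphisms is cup product with $\phi_*(u_{L/k})\in H^2(G,L'^\times)$, so everything comes down to computing $\phi_*(u_{L/k})$.

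The map $\phi_*\colon H^2(G,L^\times)\to H^2(G,L'^\times)$ is, through inflation, the base-change map $\Br(L/k)\to\Br(L'/k')$ on the relative Brauer groups sitting inside $\Br(k)$ and $\Br(k')$. Since $s=u^2$, the extension $k'/k$ has ramification index $2$ and trivial residue extension, so by the residue description of the Brauer group of a local field this map multiplies Hasse invariants by $[k':k]=2$. Hence $\mathrm{inv}_{k'}(\phi_*u_{L/k})=2\cdot\tfrac14=\tfrac12$, while $\mathrm{inv}_{k'}(u_{L'/k'})=\tfrac14$; thus $\phi_*(u_{L/k})=2\,u_{L'/k'}$ in $H^2(G,L'^\times)\isom\Z/4$. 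Consequently $\phi_*$ corresponds to multiplication by $2$ on $H_2(G,\Z)\isom\Z/2$, i.e.\ to the zero map. Therefore $H^1(k,S)\to H^1(k',S)$ is the zero map, and since $H^1(k',S)\isom\Z/2\neq0$ it is certainly not onto.

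I expect the main obstacle to be the third step: justifying carefully that the ``change of module'' map $H^2(\Gal(L/k),L^\times)\to H^2(\Gal(L'/k'),L'^\times)$ really is the base-change map on relative Brauer groups, and that it scales Hasse invariants by $[k':k]=2$. One must remember that $k'/k$ is purely inseparable, so one cannot simply quote the functoriality of $\mathrm{inv}$ under separable base change; instead one uses that $k'/k$ has trivial residue extension and ramification index $2$, together with the residue/invariant formula for local fields. It is also worth double-checking that Tate--Nakayama duality and the computation $H_2(\Z/2\times\Z/2,\Z)\isom\Z/2$ are being applied correctly in this positive-characteristic setting.
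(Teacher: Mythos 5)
Your proof is correct, and it takes a genuinely different route from the one in the paper. The paper argues at the level of elements, exploiting the special feature of characteristic $2$: since $L(\sqrt{s})=Lk'$ equals $L^{1/2}$, every $\alpha\in{}^{N}L^{\times}$ is a square $\beta^2$ with $\beta\in(Lk')^{\times}$; from $N_{Lk'/k'}(\beta)^2=1$ one gets $N_{Lk'/k'}(\beta)=1$ (again using char $2$), so $\alpha=\beta^2$ dies in the $2$-torsion group $\wh H^{-1}(G,(Lk')^{\times})$. You instead invoke Tate--Nakayama duality and reduce the whole question to a Hasse-invariant computation: after transporting $\phi_{*}$ through the duality isomorphisms, the map is cup product with $\phi_{*}(u_{L/k})$, and since restriction $\Br(k)\to\Br(k')$ scales invariants by $[k':k]=2$ (which you correctly observe requires checking for the purely inseparable $k'/k$; your verification via the residue description, or equivalently via the isomorphism $k'\cong\F((u))$ and the effect on normalized valuations, is sound), one gets $\phi_{*}(u_{L/k})=2u_{L'/k'}$, hence the zero map on $H_{2}(G,\Z)\cong\Z/2$. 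Both arguments in fact prove the stronger statement that $H^1(k,S)\to H^1(k',S)$ is the zero map; both also rely on the same nonvanishing fact $H^1(k',S)\cong\Z/2$. The paper's proof is shorter and more self-contained; your proof is more structural and clarifies that the vanishing is forced by the parity of $[k':k]$ together with the $2$-torsionness of $H_{2}(G,\Z)$, rather than by a characteristic-$2$ miracle per se (though of course the existence of a purely inseparable quadratic extension $k'/k$ is itself a characteristic-$2$ phenomenon). One place to be slightly more explicit, if you were to write this up in full, is the naturality of the identification $T(\cdot)/\mathrm{R}\cong\wh H^{-1}(G,\cdot)$ from \cite[Prop.~15]{CTS77} under the extension $k\hookrightarrow k'$; this is true because that isomorphism is induced by the natural surjection ${}^{N}L^{\times}\twoheadrightarrow\wh H^{-1}(G,L^{\times})$, but it deserves a sentence.
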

\begin{proof}   Recall   $H^1(k,S) \simeq {}^{N}L^*/I_{G}L^*$.
Let $\alpha \in L^*$ with $N_{L/k}(\alpha) = 1$. Since $k$ is a local field of
characteristic 2, $L^* \subset L(\sqrt{s})^{*2}$.  Hence $\alpha = \beta^2$ for some $\beta \in L(\sqrt{s}) = Lk'$.
Since $N_{L/k}(\alpha) = 1$, $N_{Lk'/k'}(\beta)^2 = 1$. Since char$(k) = 2$, we have $N_{Lk'/k'}(\beta) = 1$.
Thus $\beta$ gives rise to an element in $H^1(k',S)$.
Since  $H^1(k',S)$ is 2-torsion,  $\alpha = \beta^2$ is trivial in $H^1(k',S)$. Thus the natural map
$H^1(k,S) \to H^1(k',S)$ is the trivial map.
But $H^1(k',S) = \wh{H}^{-1}(G,(Lk')^*)$; and by \cite[Theorem
7.2.1]{NSW}, the latter group is dual to $\wh{H}^3(G,\Z) =
H^3(G,\Z)\simeq \Z/2$.
Thus the map $H^1(k,S) \to H^1(k',S)$  is
not onto.
\end{proof}

%-----------------------------------------------------------------------------------------------
\subsection{Nontrivial $\Sha(F,T)$}\label{nontriv sha ex}
%-----------------------------------------------------------------------------------------------

Let $T$ be a smooth $R$-torus and let $F$ be a semi-global field over
the fraction field of $R$.  In Theorem~\ref{triv Sha monotonic}, we
proved that $\Sha(F,T)$ is trivial if the reduction graph of a normal
crossings model of $F$ is a monotonic tree.  In Examples~\ref{infinite}
and~\ref{sha varies base change}, we show that if the reduction graph is
not a tree, then $\Sha(F,T)$ can  behave quite differently.  Afterwards,
in Example~\ref{non-monotonic Sha}, we show that a non-monotonic tree
can also lead to a non-trivial $\Sha(F,T)$.

\begin{example}\label{infinite}
Let $k$ be a field, let $R= k[[t]]$, with fraction field $K= k((t))$.
Let $\XX = \operatorname{Proj}(R[x,y,z]/(xyz-t(x+y+z)^3))$ and $F$ be the function field of $\XX$.
Then $\XX$ is a regular normal crossings model of $F$, whose (reduced) closed fiber
 $X$ consists of three irreducible components $C_1,C_2,C_{3}$, each isomorphic to $\P^1_k$, with $\kappa(C_i) = k$,
any two of which meet transversely at a single $k$-point. The reduction graph $\Gamma$  is a triangle, hence not a tree.
It satisfies $H_{1}(\Gamma_{\operatorname{top}},\Z)=\Z$.

Let $T$ be a $k$-torus, and let $1 \to S \to Q \to T \to 1$ be a flasque resolution.
 Then by Theorem~\ref{Sha via coef sys}(\ref{all P^1}),
we have $\Sha(F,T)  \simeq H^1(k,S)$.
 If $k$ and $T$ are as in Example \ref{anconic} or as in Example \ref{csv},
 then $\Sha(F,T) \neq 0$.
If $k$ and $T$ are as in Example~\ref{csv},
 then $\Sha(F,T)$ is infinite. Similarly, Example~\ref{algebraic-infinite} can be used to produce an example of infinite $\Sha(F,T)$ (with residue field $k=E$).
\end{example}

We now give an example of a semi-global field $F$ and an $R$-torus with
$\Sha(F, T) $  trivial and $\Sha(L, T)$  non-trivial for some finite field extension $L/F$ (which in fact comes from an extension of $k$).

\begin{example}\label{sha varies base change}
Let $k$ be a field, $R=k[[t]]$, and let $\XX$ and $F$ be as in Example~\ref{infinite}.
 Let $T$ be a $k$-torus, and let $1 \to S \to Q \to T \to 1$ be a flasque resolution.
 Then by Theorem~\ref{Sha via coef sys}(\ref{all P^1}), we have $\Sha(F,T)  \simeq H^1(k,S)$.  Now let $k$, $T$  and $k'$ be  as in Example \ref{charnot2}; or  let $k = \Q$, let $T$ be  as in Example~\ref{csv} and let $k' = \Q(\sqrt{17})$.
 Then  $\Sha(F,T)  \simeq H^1(k, S) =  0$ and $\Sha(F\otimes_k k',T) \simeq H^1(k', S)  \neq 0$.
\end{example}

We next give two related examples to show that $\Sha(F,T)$ can be
nontrivial even when the reduction graph is a tree, if it is not a
monotonic tree.  As we saw in Proposition~\ref{monotonic geometric}, a
reduction graph that is a tree can fail to be monotonic if the reduction
graph of some base change is not a tree, or if the residue field of $R$
has finite characteristic.  Our two examples illustrate each of those
two situations.

\begin{example} \label{non-monotonic Sha}
Let $T$ be a torus over a field $k$, let $R=k[[t]]$, and let $\XX$ and
$F$ be as in Example~\ref{geom not monotonic ex}.  We consider two cases:
\renewcommand{\theenumi}{\alph{enumi}}
\renewcommand{\labelenumi}{(\alph{enumi})}
\begin{enumerate}
\item \label{ch 0 nontriv Sha}
Let $k$, $T$, and $k'$ be as in Example~\ref{charnot2}, so that $k$ is
of characteristic zero.  As noted in Example~\ref{geom not monotonic
ex}, the reduction graph $\Gamma$ of $\XX$ is a tree, but not a
monotonic tree, since the reduction graph of the base change to $k'$ is
not a tree.  Since $H_{1}(\Gamma_{\operatorname{top}}, \Z)=0$,
Theorem \ref{shaP1} yields that $\Sha(F,T) \simeq H^1(k',S)/H^1(k,S) \ne 0$.
\item \label{ch 2 nontriv Sha}
Let $k$, $T$, and $k'$ be as in Proposition~\ref{char2}, so that $k$ has
characteristic two.  The reduction graph $\Gamma$ of $\XX$ is a tree,
but not a monotonic tree, even though in this case the reduction graph
of every base change is still a tree.  Again we have $\Sha(F,T) \simeq
H^1(k',S)/H^1(k,S) \ne 0$.
\end{enumerate}
\end{example}

In \cite[Theorem~5.10, Corollary~6.5]{HHK3}, it was shown that if $G$ is
a connected linear algebraic group over a semi-global field $F$, and if
$G$ is rational as an $F$-variety, then $\Sha_\PP(F,G)=\Sha_X(F,G)$ is
trivial, whether or not the reduction graph is a tree.
This prompts the following two questions in our situation:

First, given a $k$-torus $T$ and a semi-global field $F$ over $k((t))$,
must $\Sha(F,T)=\Sha_\PP(F,T)$ be trivial if the reduction graph of a normal crossings
model of $F \times_k k'$ is a tree, where $k'$ is a splitting field of
$T$ (i.e., is such that $T_{k'}$ is a split torus and hence rational)?
Example~\ref{non-monotonic Sha}(\ref{ch 2 nontriv Sha}) shows that the
answer to that question is no.  There the field $L$ in
Proposition~\ref{char2} is a splitting field of $T$, but the reduction
graph of the base change to $L$ (and indeed to any finite extension of
$k$) is a tree.

Second, if a torus $T$ is defined over $R$ but is not rational, and if
the reduction graph associated to a normal crossings model of $F$ is not a tree, can $\Sha(F,T)$
still be trivial?  The example in the next section shows that the answer
is yes.

%-----------------------------------------------------------------------------------------------
\subsection{Failure of the exact sequence for nonrational components} \label{triv Sha loop}
%-----------------------------------------------------------------------------------------------

Let $\XX$ be a normal crossings model of a semi-global field $F$ over a complete discretely valued field~$K$ with residue field~$k$ and valuation ring ~$R$. Let $\PP$ be the set of intersection points of the components of the closed fiber of~$\XX$, and let $\Gamma$ be the associated reduction graph.  Let  $T$ be an $R$-torus with flasque resolution $1\rightarrow S\rightarrow Q\rightarrow T\rightarrow 1$.
If the closed fiber consists of a union of copies of ${\mathbb P}^1_k$ meeting at $k$-points, then by Theorem~\ref{Sha via coef sys}(\ref{all P^1}) we have an isomorphism  $\Sha(F,T) \simeq H^1(k,S)^m$, where $m$ is the rank of $H_1(\Gamma_{\operatorname{top}},{\mathbb Z})$; so the displayed sequence in Theorem~\ref{shaP1} is then short exact, not merely right exact.

We now give an example to show that we need not have such an isomorphism if some irreducible component of $X$ is
 not isomorphic to ${\mathbb P}^1_k$. In fact, this example will also show that $\Sha(F,T)$ can be trivial for a non-rational torus even if the reduction graph is not a tree. Therefore, $\Sha(F,T)$ can vanish even if this would not be predicted by that isomorphism. In this example, it will be convenient to use $\rm R$-equivalence; see Section~\ref{reminders}.

\begin{example} \label{loop trivial Sha}
Let $k=k_1$, $T$, $S$ be as in Example~\ref{csv} (with $n=1$).
Then $|H^1(k,S)|=|T(k)/{\rm R}|=2$. In particular, $T$ is not rational. Let $g\in T(k)$ be a representative of the nontrivial class.
Let $C/k$ be a smooth curve with a rational map $f:C\dashrightarrow T$ defined at points $Q_1,Q_2\in C(k)$ such that $f(Q_1)=1\in T(k)$ and $f(Q_2)=g$.
Let $C^*$ be the nodal curve obtained from $C$ by identifying $Q_1$ and
$Q_2$.  By Theorem~3 of \cite{HS99}, there is a regular integral proper
curve $\XX/k[[t]]=R$ with closed fiber $C^*$, since the node can be
deformed locally to the spectrum of a regular complete local ring of
dimension two. Let $P$ be the image of $Q_1,Q_2$ in $C^*$. Blowing up $\XX$ at $P$ gives a normal crossings model whose closed fiber is a union of ${\mathbb P}^1_k$ and $C$, meeting at the two $k$-points $Q_1$ and $Q_2$. Let $U$ be the complement of $\{Q_1,Q_2\}$ in ${\mathbb P}^1_k$, let $V$ be the complement of $\{Q_1,Q_2\}$ in $C$, and let $\Gamma$ be the associated reduction graph. Then by \ref{ShaFromGraph},
 we have $\Sha(F,T)=H^1(\Gamma,\kcoef)$. Recall that the coefficient system $\kcoef$ is given by decorating $Q_i$ with $H^1(k,S)\simeq T(k)/{\rm R}$, decorating $U$ with $H^1(k(U),S)=H^1(k,S)\simeq T(k)/{\rm R}$, and decorating $V$ with $H^1(k(V),S)\simeq T(k(V))/{\rm R}$. Here the isomorphism $H^1(k(U),S)=H^1(k,S)$ holds because $U$ is an affine open in ${\mathbb P}^1_k$, as in the proof of Theorem~\ref{shaP1}.

Now view $f$ above as an element in $T(k(V))=T(k(C))$. By definition, the specialization maps on $V$ at $Q_1$ and $Q_2$ send $f$ to $1$ and to the class of $g$, respectively. It is then easy to see that in the corresponding cochain complex, ${\mathcal C}^0\rightarrow {\mathcal C}^1$ is onto, and hence $\Sha(F,T)=0$. But $\Gamma$  has a loop and $H^1(k,S)\neq 0$, so $\Hom(H_1(\Gamma_{\operatorname{top}},{\mathbb Z}), H^1(k,S))\neq 0$.
\end{example}

Note that the above example also produces an example for the triviality of $\Sha(F,T)$ even though the closed fiber of an associated normal crossings model is not a tree, and $T$ is not rational.

 \providecommand{\bysame}{\leavevmode\hbox to3em{\hrulefill}\thinspace}

%=========================================================
\noindent{\bf Author Information:}\\

\noindent Jean-Louis Colliot-Th\'el\`ene\\
Arithm\'etique et G\'eom\'etrie Alg\'ebrique, Universit\'e Paris-Saclay, CNRS, Laboratoire de\\ Math\'ematiques d'Orsay, 91405, Orsay, France\\
email: jlct@math.u-psud.fr

\medskip

\noindent David Harbater\\
Department of Mathematics, University of Pennsylvania, Philadelphia, PA 19104-6395, USA\\
email: harbater@math.upenn.edu

\medskip

\noindent Julia Hartmann\\
Department of Mathematics, University of Pennsylvania, Philadelphia, PA 19104-6395, USA\\
email: hartmann@math.upenn.edu

\medskip

\noindent Daniel Krashen\\
Department of Mathematics, Rutgers University, Piscataway, NJ 08854-8019, USA\\
email: daniel.krashen@rutgers.edu

\medskip

\noindent R.~Parimala\\
Department of Mathematics and Computer Science, Emory University, Atlanta, GA 30322, USA\\
email: parimala.raman@emory.edu

\medskip

\noindent V.~Suresh\\
Department of Mathematics and Computer Science, Emory University, Atlanta, GA 30322, USA\\
email: suresh.venapally@emory.edu

\medskip

\noindent The authors were supported on an NSF collaborative FRG grant: DMS-1463733 (DH and JH), DMS-1463901 (DK), DMS-1463882 (RP and VS).  Additional support was provided by NSF collaborative FRG grant DMS-1265290 (DH); NSF RTG grant DMS-1344994 and NSF grant DMS-1902144 (DK); NSF DMS-1805439 (DH and JH); NSF DMS-1401319 (RP); NSF DMS-1301785 (VS); and NSF DMS-1801951 (RP and VS).

\end{document}